\definecolor{darkgreen}{rgb}{0.0, 0.2, 0.13}
\newcommand{\Lloc}[1]{\mathbf{L^{#1}_{loc}}}
\newcommand{\norma}[1]{{\left\|#1\right\|}}
\g@addto@macro\normalsize{%
  \setlength\abovedisplayskip{4pt}
  \setlength\belowdisplayskip{4pt}
  \setlength\abovedisplayshortskip{4pt}
  \setlength\belowdisplayshortskip{4pt}
}
\numberwithin{equation}{section}
\crefname{section}{Section}{Sections}
\crefname{subsection}{Subsection}{Subsections}
\crefname{condition}{Condition}{Conditions}
\crefname{hypothesis}{Hypothesis}{Conditions}
\crefname{assumption}{Assumption}{Assumptions}
\crefname{lemma}{Lemma}{Lemmas}
\crefname{definition}{Definition}{Definitions}
\numberwithin{equation}{section}
\newtheorem{theorem} {Theorem}[section]
\newtheorem{lemma}{Lemma}[section]
\newtheorem{counter example}{Counter Example}[section]
\def\CC{{\rm \kern.24em \vrule width.02em height1.4ex depth-.05ex \kern-.26emC}}
\def\TagOnRight
\def\AA{{it I} \hskip-3pt{\tt A}}
\def\QQ{\rlap {\raise 0.4ex \hbox{$\scriptscriptstyle |$}} {\hskip -0.1em Q}}
\newcommand{\vo}{\vec{o}\@ifnextchar{^}{\,}{}}
\def\YYint#1#2#3{{\setbox0=\hbox{$#1{#2#3}{\iint}$}
    \vcenter{\hbox{$#2#3$}}\kern-.50\wd0}}
\def\XXint#1#2#3{{\setbox0=\hbox{$#1{#2#3}{\int}$}
    \vcenter{\hbox{$#2#3$}}\kern-.50\wd0}}
\def\namedlabel#1#2{\begingroup
   \def\@currentlabel{#2}%
   \label{#1}\endgroup
}
\newcommand{\rmh}[1]{\mathpalette{\raisem@th{#1}}}
\newcommand{\raisem@th}[3]{\hspace*{-1pt}\raisebox{#1}{$#2#3$}}
\newcommand{\descref}[2]{\hyperref[#1]{\textnormal{\textcolor{black}{(}\textcolor{blue}{\bf #2}\textcolor{black}{)}}}}
\newcommand{\dref}[2]{\hyperref[#1]{\textcolor{black}{(}\textcolor{blue}{\bf #2}\textcolor{black}{)}}}
\newcommand{\be} {\begin{eqnarray}}
\newcommand{\ee} {\end{eqnarray}}
\newcommand{\Bea} {\begin{eqnarray*}}
\newcommand{\Eea} {\end{eqnarray*}}
\newcommand{\R}{\mathbb{R}}
\renewcommand{\L}[1]{\mathbf{L^#1}}
\newcounter{whitney}
\newcounter{ineqcounter}
\def\ps@pprintTitle{%
\let\@oddhead\@empty
\let\@evenhead\@empty
\def\@oddfoot{}%
\let\@evenfoot\@oddfoot}
\begin{document}

\title{Positivity--preserving numerical scheme for hyperbolic systems with $\delta\,-$ shock solutions and its convergence analysis}

\author[myaddress1]{Aekta Aggarwal}
\ead{aektaaggarwal@iimidr.ac.in}

\address[myaddress1]{Indian Institute of Management, Prabandh Shikhar, Rau--Pithampur Road, Indore, Madhya Pradesh 453556.}

\author[myaddress2]{Ganesh Vaidyan\corref{cor1}}
\cortext[cor1]{Corresponding author}
\ead{ganesh@tifrbng.res.in}
\author[myaddress2]{G.~D.~Veerappa Gowda}
\ead{gowda@tifrbng.res.in}
\address[myaddress2]
{Centre for Applicable Mathematics, Tata Institute of Fundamental Research, Post Bag No 6503, Sharadanagar, Bangalore - 560065, India.}

\begin{abstract}
In this article convergent numerical schemes are proposed for approximating the solutions, possibly measure--valued with concentration (delta shocks), for a class of non--strictly hyperbolic systems. These systems are known to model physical phenomena such as the collision of clouds and dynamics of sticky particles, for example. The scheme is constructed by extending the theory of discontinuous flux for scalar conservation laws, to capture measure--valued solutions with concentration. The numerical approximations are analytically shown to be entropy stable in the framework of \cite{bouchut1994zero}, satisfy the physical properties of the state variables, and converge to the weak solution. The construction allows natural extensions of the scheme to its higher--order and multi--dimensional versions. The scheme is also extended for some more classes of fluxes, which admit delta shocks and are also known to model physical phenomena. Various physical systems are simulated both in one dimension and multi-dimensions to display the performance of the numerical scheme and comparisons are made with the test problems available in the literature.\\
{Keywords: Discontinuous Flux,  $\delta\,-$ shock, Generalized Pressureless Gas Dynamics}
\end{abstract}
\maketitle
\section{Introduction}\label{intro}
This paper studies the following $2 \times 2$ hyperbolic system
\begin{eqnarray}
\label{E1}\rho_t+F(\rho,w)_x&=&0,\\
\label{E2}w_t+G(\rho,w)_x&=&S(\rho,w),
\end{eqnarray}
where $F,G$ and $S$ are sufficiently smooth real-valued functions. The numerical approximation of such systems has been of interest, in the past decades, and the proposed numerical schemes have been primarily based on the eigenstructure of the system. The article aims to develop relatively simpler {convergent} numerical schemes for the system using the techniques for scalar conservation laws. If we assume that $w(x,t)$ is known for all $(x,t) \in \R \times \R^+$,  the equation \eqref{E1} can be viewed as a scalar conservation law in $\rho$, whose flux function $F(\rho,w(x,t))$ may be discontinuous in the space variable $x$, and vice versa for the second equation \eqref{E2}, but unfortunately, this
technique may not work in general. However, in this article, we
propose numerical methods based on this type of technique for certain class of $F$ and $G$, and also prove their convergence under some additional assumptions on the fluxes. 
From now on, we restrict ourselves to the following:
\[F(\rho, w)=w, G(\rho,w)=wg\left(\frac{w}{\rho}\right)+P\left(\rho,\frac{w}{\rho}\right).\]
In particular, the system \eqref{E1}-\eqref{E2} can be rewritten as:
\begin{eqnarray}\label{hy}
\rho_t + (\rho g(u))_x&=&0,\\
\label{hy1}
 w_t + (w g(u)+P(\rho,\:u))_x&=&S(\rho,w),
\end{eqnarray}
where $w=\rho u.$
This class finds numerous physical applications that depend on the nature of the functions $g,S$ and $P,$ and may admit non-classical shocks, which are called as $\delta\,-$ shocks.
 The $\delta\,-$ shock wave is a generalization of a classical shock wave and is a kind of discontinuity, on which the state variables of the system \eqref{hy}-\eqref{hy1} develop an extreme concentration in the form of a weighted Dirac delta function with the discontinuity as its support. Physically, the delta shock wave represents the process where the mass is concentrated and maybe interpreted as the galaxies in the universe. This generalization was introduced in the Ph.D. thesis of \cite{korchinski1978solution}, 
 post which, it has been explored extensively in the literature, see for example, \cite{danilov2005delta,panov2006delta, shelkovich2006riemann,tan1994delta} and references therein.
 
 These kind of systems have not only been of  mathematical interest due to the admission of $\delta\,-$ shock solution, but they are also known to model physically important phenomena. Some of the important ones are generalized pressureless gas dynamics \textbf{(GPGD)} system, where $g$ is a non-decreasing function, $S=0=P$, and isentropic Euler equations for modified Chaplygin gas dynamics  \textbf{(CGD)} system, where $g(u)=u,\:S=s\rho^{-\alpha}$ and $\:P=0$. \textbf{(GPGD)} system has been studied in \cite{bouchut1994zero,huang, mitrovic2007delta}. With $g(u)=u,$ the system is called as pressureless gas dynamics \textbf{(PGD)} system and can be used to describe the process of the motion of free particles sticking under collision. When \textbf{(PGD)} system is augmented with a Coulomb friction source term $S=\beta\rho$, see \cite{shen2015riemann}, it can be used to model the sticky particle dynamics with interaction. We will call it as \textbf{(PGDS)} in this paper.
\textbf{(CGD)} system was studied in \cite{chaplygin1944gas, wang2013riemann} and was shown to work as a suitable mathematical approximation to calculate the lift on a wing of an airplane in aerodynamics. The system also finds presence in cosmology, and is also used as a possible model for dark energy. 

There have been various numerical studies in the past for 
\textbf{(PGD)} system. To name a few, schemes were proposed in \cite{bouchut1994zero, bouchut2003numerical, leveque2004dynamics} which used the analytical expression for the shock location
derived in \cite{sheng1999riemann} to determine the numerical flux in the case, $u_l>0>u_r$. Generalised eigenvectors obtained from Jordan canonical form were used to construct 
flux difference splitting based numerical schemes in \cite{garg2020class}. First and second-order relaxation and kinetic schemes were proposed in \cite{berthon2006relaxation,bouchut2003numerical}, where the authors established that the solutions preserve both physical properties of the system and discrete entropy inequality proposed in \cite{bouchut1994zero}. Discontinuous Galerkin based higher order schemes were proposed in \cite{yang2013discontinuous} and the authors showed that the solutions preserve physical properties.
 Additionally, semi--discrete central--upwind scheme of \cite{bryson2005semi} and non-oscillatory central difference scheme of 
 \cite{nessyahu1990non} were used for approximating \textbf{(CGD)} in \cite{wang2013riemann} respectively.

This paper aims to construct efficient numerical schemes to approximate these systems and capture $\delta\,-$ shock solutions by suitably 
treating the system through, two interdependent scalar conservation laws with spatially dependent discontinuous flux.  Godunov type schemes will be constructed by solving appropriate local Riemann problems for each of them at each numerical interface, and {the set of two schemes thus obtained, will be taken as a scheme for the system}. The scheme will be called as \textbf{(DDF)} scheme, "Decoupled Discontinuous Flux Scheme" in the paper.
In \textbf{(GPGD)}, it can be noted that for a given $u(x,\:t)$, the first equation
\begin{equation}\label{1}
 \rho_t +(\rho g(u(x,\:t)))_x=0
\end{equation}
is a linear conservation law in $\rho$ with a possibly discontinuous variable coefficient $g(u(x,\:t))$. Similarly, given a $\rho(x,\:t)>0,\:$ the second equation
\begin{equation}\label{3}
 w_t +\left(w g\left(\frac{w}{\rho(x,\:t)}\right)\right)_x=0
\end{equation}
can be treated as a non-linear scalar conservation laws with discontinuous flux, if we assume the following \begin{eqnarray} \label{g1} g(0)=0,\:w \mapsto w g\displaystyle\left(\frac{w}{\rho(x,\:t)}\right) \text{is a function with only one local minimum} .\end{eqnarray}The above condition implies that $w \mapsto w g\displaystyle\left(\frac{w}{\rho(x,\:t)}\right) $ has a minimum at $w=0,$ independent of $\rho(\cdot, \cdot).
$ 
The two equations \eqref{1} and \eqref{3} behave differently as \eqref{3} is a non-linear conservation law with discontinuous convex flux, which admits bounded solutions, while \eqref{1} is a conservation law with a linear advective flux with sign changing coefficient, which may admit measure valued solutions. Both the conservation laws admit infinitely many solutions. It is thus necessary to choose appropriate individual entropy setups so that they are compatible to converge to the right expected physical solution of \eqref{hy}-\eqref{hy1}. We will choose the entropy setup of \cite{adimurthi2000conservation} for \eqref{3}, while, for \eqref{1}, the current setups of discontinuous flux, will be appropriately modified to capture measure valued solutions.
It is important to note that, the scheme proposed in \cite{agg1} together with the proposed scheme for \eqref{3}, does not converge to the right rarefaction solution of the system \eqref{hy}-\eqref{hy1}.
Hence, the construction of the scheme in the article for \eqref{1}, plays a crucial rule in the success of the numerical scheme.

 In this article, the schemes are constructed for \eqref{hy}-\eqref{hy1} in absence of pressure and source and then are adapted to \textbf{(PGDS)} which has non--zero $S$. The scheme is also extended to capture the $\delta\,-$
shock solutions of strictly hyperbolic systems like \textbf{(CGD)}
which has non--zero $P$. 
Since the scheme is based on theory of scalar conservation laws, it is easily extended to higher order, using appropriate limiters, and to multi-dimensions using the dimension splitting techniques. The scheme is tested with various initial data, modeling physical applications in both, one and multi dimensions. For \textbf{(GPGD)} system, with some additional assumptions on $g$, and in particular, for \textbf{(PGD)} system, the numerical solutions are shown to preserve the physical properties of the system. The numerical scheme is shown to converge to the weak solution satisfying the entropy inequality, introduced in \cite{bouchut1994zero}. Though the proofs are based on the idea of writing the scheme in the incremental form, the estimates on the incremental coefficients are non trivial and more involved unlike in the case of decoupled systems which are detailed in the last part of this article. 

The paper has been organized as follows: In \S\ref{LI}, we revisit the Riemann problems
for the scalar conservation law with one spatial discontinuity and derive the flux approximations at the interface of discontinuity. In \S\ref{sys}, we propose the \textbf{(DDF)} scheme to approximate \textbf{(GPGD)}. The scheme is also extended to its higher order version.  In \S\ref{st}, in addition to the convergence of the scheme, the numerical solutions are shown to satisfy the entropy inequality and  preserve the physical properties of the system, under certain assumptions on the fluxes. In \S\ref{NS}, the efficiency of the scheme, along with its extensions, is displayed, by comparing their performance with the existing literature. The schemes are also tested for hyperbolic systems that admit $\delta\,-$ shocks in presence of pressure and source terms such as \textbf{(PGDS)}
and \textbf{(CGD)}, and are also extended to multi dimensions, using dimensional splitting.
\section{Preliminaries}\label{LI}
This section aims to discuss the notion of solution for the transport equation with spatial discontinuity given by:
\begin{equation}\label{scalar}
\begin{array}{lll}
\rho_t + F(x,\:\rho)_x &=& 0  \,\,\quad \quad\quad(x,\:t)\in \R\times\R^+, \\
\quad\quad\quad\rho(x,\:0) &=&\rho_0(x)\quad\quad  x\in \R, 
\end{array} 
\end{equation}
where $F(x,\:\rho)=H(x)b\rho + (1-H(x))a\rho.$
For Riemann data, 
all cases except for $a\ge0,\:b\le0,\:$ can be handled by \cite{adimurthi2005optimal,adimurthi2000conservation}. The overcompressive pair $a\ge0,\:b\le0,\:$ was recently studied in \cite{agg1}, where measure valued solutions were proposed, while bounded solutions were proposed in \cite{mishra2005}.
In the overcompressive case, 
characteristics overlap each other at the interface $x=0$ and cases may arise when there may not exist a weak solution satisfying: 
 \begin{eqnarray*}\label{weak1}\begin{aligned}
\displaystyle \int_0^\infty \displaystyle \int_{-\infty}^{\infty} \left(\rho(x,\:t) \phi_t(x,\:t)+F (x,\:\rho(x,\:t)) \phi_x(x,\:t)\right) dxdt=0,\:\forall \phi \in C_c^{\infty}(\R\times\R^+)\end{aligned}.\end{eqnarray*} 
To this end, we concentrate on the case, $a\ge0,\:b\le0$ and 
for each $\epsilon>0,\:$, we consider a non-linear approximation of \eqref{scalar}:
\begin{equation}\label{appro}
 \rho_t+(F_{\epsilon}(x,\:\rho))_x=0,\:F_{\epsilon}(x,\:\rho)=H(x)f_{\epsilon}(\rho) + (1-H(x))g_{\epsilon}(\rho)
\end{equation}
 with initial data as Riemann data $(\rho_l,\:\rho_r)$ and with
\begin{eqnarray}
\label{g}
g_{\epsilon}(\rho) =(a\rho-a\epsilon \rho^2)\chi_{\raisebox{-0.5ex}{$\scriptstyle\{\rho\ge0\}$}}+(a\rho+a\epsilon \rho^2)\chi_{\raisebox{-0.5ex}{$\scriptstyle\{\rho<0\}$}}, \\
 \label{f}f_{\epsilon}(\rho)=(b\rho-b\epsilon \rho^2)\chi_{\raisebox{-0.5ex}{$\scriptstyle\{\rho\ge0\}$}}+(b\rho+b\epsilon \rho^2)\chi_{\raisebox{-0.5ex}{$\scriptstyle\{\rho<0\}$}}.
 \end{eqnarray}
 It can be noted that the above nonlinearification is not the same as the one proposed in \cite{agg1} and depends on $a$ and $b$ linearly. This choice is crucial as pointed out in \S\ref{intro} and will also be seen in \S\ref{sys}.

\begin{figure}[H]
 \centering
 \includegraphics[scale=.1]{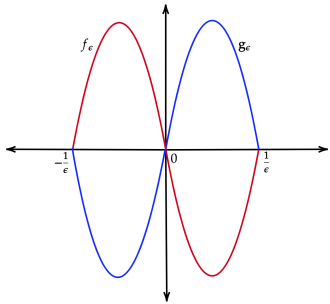}
 \caption{Flux Structure}
 \label{Figallf}
\end{figure}
The flux structure for the equation \eqref{appro} is displayed in \textbf{Figure~\ref{Figallf}}.
 Various cases arise and can be handled by theory of discontinuous flux,\:
 detailed in \cite{adimurthi2005optimal, adimurthi2000conservation, mishra2existence,mishra2005}, which are briefly exhibited below for completeness:
\begin{enumerate}[(a)]
 \item \underline{$\rho_l,\:\rho_r\le0 :$}
 Note that $\displaystyle g_\epsilon, \displaystyle f_\epsilon$ are convex and concave respectively. The solution $\rho^\epsilon(x,\:t)$ is given in \textbf{Figure~\ref{Figcase1}}, where \[\left(\rho_l-\frac{-1}{\epsilon}\right)s_g=g_{\epsilon}(\rho_l),\:\left(\rho_r-\frac{-1}{\epsilon}\right)s_f=f_{\epsilon}(\rho_r).\]
 \begin{figure}[H]
 \centering
 \includegraphics[scale=.2]{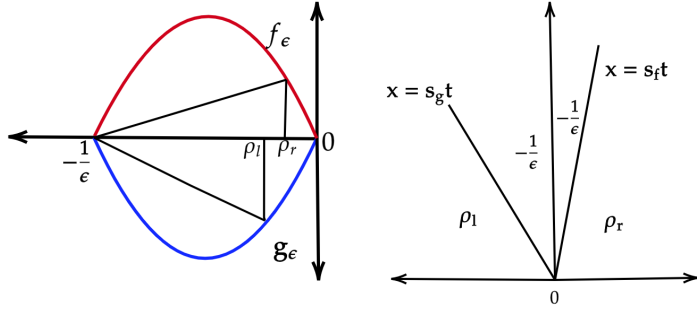} 
 \caption{$\rho_l,\:\rho_r\le0$}
 \label{Figcase1}
\end{figure}
\item \underline{$\rho_l,\:\rho_r\ge0 :$}
Note that $\displaystyle g_\epsilon,\:\displaystyle f_\epsilon$ are concave and convex respectively.\begin{figure}[H]
 \centering
 \includegraphics[scale=.2]{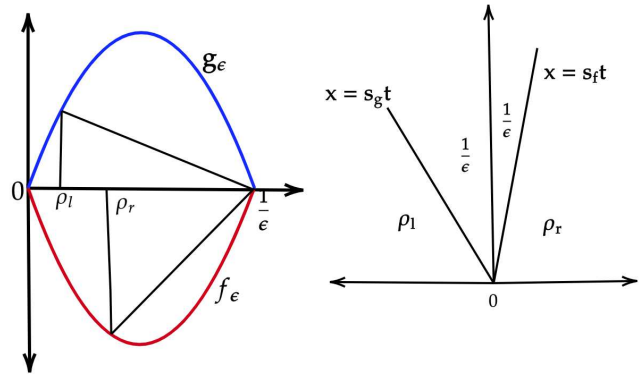}
 \caption{$\rho_l,\:\rho_r\ge0$}
 \label{Figcase2}
 \end{figure}
The solution $\rho^\epsilon(x,\:t)$ is given in \textbf{Figure~\ref{Figcase2}}, where \[\left(\rho_l-\displaystyle \frac{1}{\epsilon}\right)s_g=g_{\epsilon}(\rho_l),\:\left(\rho_r-\displaystyle \frac{1}{\epsilon}\right)s_f=f_{\epsilon}(\rho_r).\]
 \item \underline{$\rho_l<0,\:\rho_r>0,\:$}:\\
 Note that $\displaystyle g_\epsilon,\:\displaystyle f_\epsilon$ are convex and convex respectively.
 \begin{figure}[H]
 \centering
 \includegraphics[scale=.2]{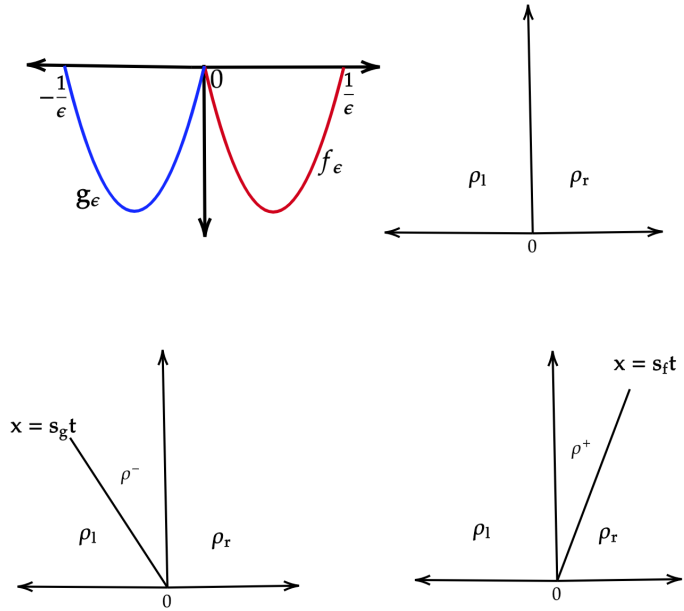}
 \caption{$\rho_l<0,\:\rho_r>0,\:$}
 \label{Figcase3}
 \end{figure}
The flux structure and the solution $\rho^\epsilon(x,\:t)$ is given in \textbf{Figure~\ref{Figcase3}}.
The second, third and fourth figures in \textbf{Figure~\ref{Figcase3}} represent the cases $\displaystyle g_\epsilon(\rho_l)= \displaystyle f_\epsilon(\rho_r)$, $\displaystyle g_\epsilon(\rho_l)< \displaystyle f_\epsilon(\rho_r)$ and $\displaystyle g_\epsilon(\rho_l)> \displaystyle f_\epsilon(\rho_r)$ respectively, where, 
 \begin{equation*}
 g_{\epsilon}(\rho^-)=\displaystyle f_\epsilon(\rho_r),\:
(\rho_l-\rho^-)s_g=g_{\epsilon}(\rho_l)-g_{\epsilon}(\rho^-)
\end{equation*} and \begin{equation*}
 g_{\epsilon}(\rho_l)=\displaystyle f_\epsilon(\rho^+),\:
(\rho_r-\rho^+)s_f=f_{\epsilon}(\rho_r)-f_{\epsilon}(\rho^+).
\end{equation*}
\item \underline{$\rho_l\ge0,\:\rho_r\le0$}:
 \begin{figure}[H]
 \centering
 \includegraphics[scale=.2]{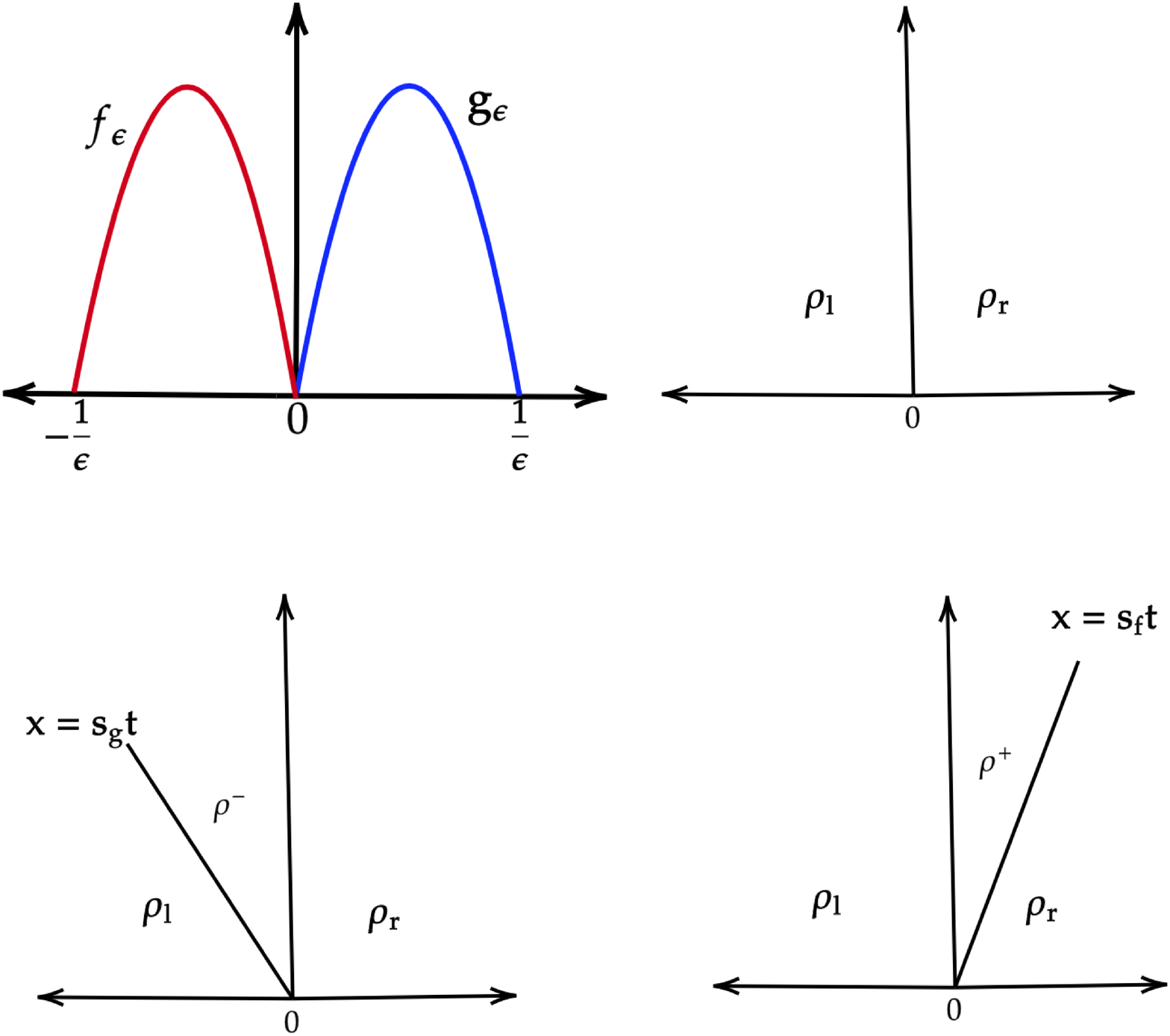}
 \caption{$\rho_l\ge0,\:\rho_r\le0$}
 \label{Figcase4}
 \end{figure}
 Note that $\displaystyle g_\epsilon,\:\displaystyle f_\epsilon$ are concave and concave respectively. The
 flux structure and the solution $\rho^\epsilon(x,\:t)$ is given in \textbf{Figure~\ref{Figcase4}}. The second third and fourth figures in \textbf{Figure~\ref{Figcase4}} represent the cases $\displaystyle g_\epsilon(\rho_l)= \displaystyle f_\epsilon(\rho_r)$, $\displaystyle g_\epsilon(\rho_l)> \displaystyle f_\epsilon(\rho_r)$ and $\displaystyle g_\epsilon(\rho_l)< \displaystyle f_\epsilon(\rho_r)$ respectively, where, 
 \begin{equation*}g_{\epsilon}(\rho^-)=\displaystyle f_\epsilon(\rho_r),\:
(\rho_l-\rho^-)s_g=g_{\epsilon}(\rho_l)-g_{\epsilon}(\rho^-),\end{equation*}
and
\begin{equation*}g_{\epsilon}(\rho_l)=\displaystyle f_\epsilon(\rho^+),\:
(\rho_r-\rho^+)s_f=f_{\epsilon}(\rho_r)-f_{\epsilon}(\rho^+).\end{equation*}
\end{enumerate}
It can be deduced that the point wise limit
\begin{equation}\label{AG}
\lim_{\epsilon\rightarrow 0} \rho^\epsilon(x,\:t)=\overline{\rho}(x,\:t):=\rho_l\chi_{\raisebox{-0.5ex}{$\scriptstyle\{x<0\}$}}+\rho_r\chi_{\raisebox{-0.5ex}{$\scriptstyle\{x>0,\:\}$}}\end{equation}
is, in fact, the solution proposed in \cite{adimurthi2005optimal}.
It is worthwhile noting
that the point wise limit in \eqref{AG} does not respect the conservation of mass in the interval $[\alpha,\:\beta],\:\alpha<0<\beta$ as\[0=\frac{d}{dt}\displaystyle \int_\alpha^\beta \rho(x,\:t)dx \ne -b \rho_r +a \rho_l. \]
Instead, the weak convergence of $\{\rho_{\epsilon}\}_{\{\epsilon>0\:\}} \in \Lloc1({\R})$ in the space of signed Radon measures 
gives \begin{equation}\rho(x,\:t):=\overline \rho(x,\:t)+t(a\rho_l-b\rho_r)\delta_{\{x=0\}}, 
\end{equation}
which takes care of the missing mass by concentrating it at the point $x=0,\:$ through the term $t(a\rho_l-b\rho_r)\delta_{\{x=0\}}$. This motivates to look for solutions of the type 
\begin{equation}\label{Definition} \overline \rho(x,\:t)+ w_\delta(t)\delta_{0}\end{equation} which solves the problem \eqref{scalar} in the following sense: $\forall \phi \in C_c^{\infty}(\R \times (0, \infty)), $
\begin{equation}\begin{aligned}\label{weak}
\displaystyle \int_0^\infty\displaystyle \int_{-\infty}^{\infty} \left(\overline \rho(x,\:t) \phi_t(x,\:t)+F (x,\:\overline \rho(x,\:t)) \phi_x(x,\:t)\right) dxdt\\
+\displaystyle \int_0^\infty w_\delta(t) \phi_t(0, t)ds=0.
\end{aligned}\end{equation}
Since the solution of the Riemann problem \eqref{scalar} is measure valued at the interface $x=0,\:$ the Godunov flux at $x=0$ for the numerical scheme for \eqref{scalar} cannot be evaluated in the usual way.

 Other cases except $a\ge0,\:b\le0$, can also be obtained with $g_{\epsilon}(\rho) =a\rho-a\epsilon \rho^2,\:f_{\epsilon}(\rho)=b\rho-b\epsilon \rho^2.$
Also, 
 $\lim_{\epsilon\rightarrow0}g_{\epsilon}(\rho)=a\rho,\:\lim_{\epsilon\rightarrow0}f_{\epsilon}(\rho)=b\rho,\:
 \rho(x,\:t)=\lim_{\epsilon\rightarrow0}\rho_{\epsilon}(x,\:t).$
 The solution for the Riemann Problem \eqref{appro} is known and hence the flux at the interface $x=0$ for \eqref{appro} is given by \[F_{\epsilon,\:0}(a,\:b,\:\rho_l,\:\rho_r):=g_{\epsilon}(\rho_{\epsilon}^-)=f(\rho_{\epsilon}^+), \]where $\rho_\epsilon^-=\lim_{x\rightarrow0^{-}}\rho_\epsilon(x,\:t),\:\rho_\epsilon^+=
 \lim_{x\rightarrow0^{+}}\rho_\epsilon(x,\:t)$.
It can be derived using the results in \cite{adimurthi2005optimal, adimurthi2004godunov} that 
$F_{\epsilon,\:0}(a,\:b,\:\rho_l,\:\rho_r)=$
 \begin{equation}
{ \left\{\begin{array}{ccl}
\min\left(\displaystyle g_\epsilon\left(\min(\rho_l,\:\displaystyle \frac{1}{\epsilon})\right),\:\displaystyle f_\epsilon\left(\max(\rho_r,\:\displaystyle \frac{1}{\epsilon})\right)\right) & \, \mbox{if}\, &a\geq0,\:b>0,\:\\[1mm]
\max\left(\displaystyle g_\epsilon\left(\max(\rho_l,\:\displaystyle \frac{1}{\epsilon})\right),\:\displaystyle f_\epsilon\left(\min(\rho_r,\:\displaystyle \frac{1}{\epsilon})\right)\right) & \, \mbox{if}\, &a<0,\:b\leq0,\:\\[1mm]
{\displaystyle \left\{\begin{array}{ccl}
\displaystyle \max\Big(\displaystyle g_\epsilon(\rho_l), \displaystyle f_\epsilon(\rho_r)\Big) & \, \, \mbox{if}\, \, &\rho_l<0,\:\rho_r> 0 , \\[.1mm]
\displaystyle \min\Big(\displaystyle g_\epsilon(\rho_l), \displaystyle f_\epsilon(\rho_r)\Big) & \, \, \mbox{if}\, \, & \rho_l>0,\:\rho_r< 0 , \\[.1mm]
\end{array}\right.} & \, \, \mbox{if}\, \, &a\geq0,\:b\leq0,\:\\[.1mm]
0 & \, \, \, \, & \mbox{otherwise}.
\end{array}\right.}.\label{overapp}
\end{equation}
 Owing to the behavior of the solutions and the fluxes of \eqref{scalar} and \eqref{appro} as $\epsilon\rightarrow0$, we define the flux at the interface $x=0$ for \eqref{scalar} as
 \[F_0(a, b, \rho_l,\:\rho_r):=\lim_{\epsilon\rightarrow0}g_{\epsilon}(\rho^-)=\lim_{\epsilon\rightarrow0}f_{\epsilon}(\rho^+)=\lim_{\epsilon\rightarrow0}F_{\epsilon, 0}(a, b, \rho_l,\:\rho_r), \]
 which implies that $F_0(a, b, \rho_l,\:\rho_r)=$
 \begin{equation}\label{over}
{ \left\{\begin{array}{ccl}
a\rho_l & \, \mbox{if}\, &a\geq0,\:b>0,\:\\[.1mm]
b\rho_r & \, \mbox{if}\, &a<0,\:b\leq0,\:\\[.1mm]
{\displaystyle \left\{\begin{array}{ccl}
\displaystyle \max\Big(a\rho_l,\:b\rho_r\Big) & \, \, \mbox{if}\, \, &\rho_l<0,\:\rho_r> 0 , \\[.1mm]
\displaystyle \min\Big(a\rho_l,\:b\rho_r\Big) & \, \, \mbox{if}\, \, & \rho_l>0,\:\rho_r< 0 , \\[.1mm]
\end{array}\right.} & \, \, \mbox{if}\, \, &a\geq0,\:b\leq0,\:\\[.1mm]
0 & \, \, \, \, & \mbox{otherwise}.
\end{array}\right.}
\end{equation}
Knowing the flux at the interface $x=0,\:$ a finite volume scheme can now be proposed with general initial data $\rho_0(x).$ For $h>0,\:$ let the space grid points as $\displaystyle x_{i+\frac{1}{2}}=ih,\:i\in\mathbb{Z}$ such that $x_\frac{1}{2}=0.$
For  $\delta\,- t>0,\:$, 
define the time discretization points $t_n=n\Delta t $ for non-negative integer $n$, and $\lambda=\Delta t/h.$
Define
 $\rho_{i}^{n}=\displaystyle\frac{1}{h}\int_{C_i}\rho(x,\:t^n)dx,\:$
as the approximation for $\rho(x,\:t)$ in the cell $C_{i}=[x_{i-\frac{1}{2}} ,\:x_{i+\frac{1}{2}})$ at time $t_n$. Then, the finite volume scheme is given by
\begin{eqnarray}\label{linear}
\rho_{i}^{n+1}=\rho_{i}^{n}-\lambda\big(\hat{F}(a_{i},\:a_{i+1},\:\rho_{i}^{n},\:\rho_{i+1}^{n})-\hat{F}(a_{i-1}, a_{i},\:\rho_{i-1}^{n},\:\rho_{i}^{n})\Big),\:
\end{eqnarray}
where, 
\begin{equation*}a_i=
{ \left\{\begin{array}{ccl}
a & \, \mbox{if}\, &i\le0,\:\\[2mm]
b & \, \mbox{if}\, & i>0,\:
\end{array}\right.}, \, \, \, 
\hat{F}(a_{i},\:a_{i+1},\:\rho_{i}^{n},\:\rho_{i+1}^{n})=
F_0(a_{i},\:a_{i+1},\:\rho_{i}^{n},\:\rho_{i+1}^{n}), 
\end{equation*}
where $\hat{F}(a_{i},\:a_{i+1},\:\rho_{i}^{n},\:\rho_{i+1}^{n})$ is the numerical flux associated with the flux
$F(x,\:\rho)$ at the interface $x_{i+\frac{1}{2}}$ at the time $t^n$, with $F_0$ given by \eqref{over}. 
Since $F(x,\:\rho)=
a\rho$ if $x<0$ and $b\rho$ for $x>0,\:$ 
with $a\ge0,\:b\le0,\:$ the flux at any point away from the point $x_\frac{1}{2}=0$ is the usual upwind flux for the linear transport equation. At the interface $x_\frac{1}{2}=0,\:$ though the solution is measure-valued and a Godunov flux cannot be calculated in the usual way, we have, however, the flux,\:owing to the non--linearification in the previous section and is given by $F_0(a,\:b,\:\rho_i^n ,\:\rho_{i+1}^n), $ whose expression is given by \eqref{over}.
 

\section{Numerical Scheme}\label{sys}
We start by proposing a numerical scheme for \eqref{hy} with $S=P=0$. Instead of creating a Riemann solver based on the eigenstructure, each equation of the system will be treated separately, assuming that the flux of the equation is a function of the remaining state variable at the previous time step. Consider the system \textbf{(GPGD)}
\begin{equation}\label{pressurelessU}
\begin{array}{ccl}
U_t+ F(U)_x=0,\:U_0(x)=U(x,\:0), 
\end{array}
\end{equation}
 with \begin{equation}\label{FP}
 U=\begin{pmatrix}
 \rho\\
 w
 \end{pmatrix},\:F(U)=\begin{pmatrix}
 F^\rho\\
 F^w 
 \end{pmatrix},\:F^\rho(\rho,\:w)=\rho g\Big(\displaystyle\frac{w}{\rho}\Big),\:F^w(\rho,\:w)= w g\Big(\displaystyle\frac{w}{\rho}\Big),\:\rho>0.\end{equation} 
 This system has double eigenvalue $g(u)$ and it has been shown in \cite{mitrovic2007delta} that it admits $\delta\,-$ shocks. Therefore, as in the case of strictly hyperbolic systems, traditional approximate/exact Riemann solvers cannot be used here.
We now propose the \textbf{(DDF)} scheme.

 \subsection{\textbf{(DDF)} Scheme}
 With the notations same as before, consider
Let
{
\begin{eqnarray*}&U_{i}^{n}=\displaystyle\frac{1}{h}\int_{C_i}U(x,\:t^n)dx,\:\quad u_{i}^{n}=\displaystyle\frac{w_i^n}{\rho_i^n}
 \end{eqnarray*}
as the approximation for $U$ and $u$ in the cell $C_{i}=[x_{i-\frac{1}{2}} ,\:x_{i+\frac{1}{2}})$ at time $t_n$. Let $(U_i^n ,\:U_{i+1}^n):=(\rho_i^n,\:\rho_{i+1}^n,\:w_i^n,\:w_{i+1}^n)$. Then, the finite volume scheme for the system \eqref{pressurelessU} is given by:
\begin{equation}\label{Numer}
\begin{array}{ccl}
U_{i}^{n+1}&=&U_{i}^{n}-\lambda\big(\hat{F}(U_{i}^{n},\:U_{i+1}^{n})-\hat{F}(U_{i-1}^{n},\:U_{i}^{n})\big)
\end{array}, 
\end{equation}
where
\begin{equation}\label{Numer1}
 \hat{F}(U_{i}^{n},\:U_{i+1}^{n})=\begin{pmatrix} \hat{F}^\rho(U_{i}^{n},\:U_{i+1}^{n})\\[1mm]\hat{F}^{w}(U_{i}^{n},\:U_{i+1}^{n})\end{pmatrix}
\end{equation}
where $\hat{F}^{\rho}(U_{i}^{n},\:U_{i+1}^{n}),\:\hat{F}^{w}(U_{i}^{n},\:U_{i+1}^{n})$ and $\hat{F}(U_{i}^{n},\:U_{i+1}^{n})$ are the numerical fluxes associated with $F^\rho,\:F^w $ and $F$ (defined by \eqref{FP}), at 
$ x_{i+\frac{1}{2}}$ at time $t^n$\@. We will have the following fluxes:
\begin{equation}\label{FrhoN}
 \hat{F}^{\rho}(U_{i}^{n},\:U_{i+1}^{n})=F_{0}\Big(g(u_i^n),\:g(u_{i+1}^n),\:\rho_i^n,\:\rho_{i+1}^n\Big),\:
\end{equation}}
 where $F_{0}$ is given by \eqref{over} and \begin{equation}\label{FwN}
 \hat{F}^{w}(U_i^{n},\:U_{i+1}^{n})=\max\Big(F^{w}\left(\rho_{i}^{n},\:\max(w_i^n,\:0)\right),\:F^{w}\left(\rho_{i+1}^{n},\:\min(w_{i+1}^n,\:0)\right)\Big).
\end{equation}
It is to be noted that $ \hat{F}^{\rho}(U_{i}^{n},\:U_{i+1}^{n})$ is computationally less expensive than the one proposed in \cite{agg1}.
The flux $\hat{F}^\rho$ is obtained by solving the local Riemann problems at the interface $x_{i+\frac{1}{2}}$ as described below:\\
On each $C_i\times (t^n,\:t^{n+1})$, we look at the conservation law, 
\begin{equation*}
\rho_{t}+(F^\rho(u_i^n,\:\rho))_{x}=0
,\end{equation*}
with $F^\rho(u_i^n,\:\rho)=g(u_{i}^{n})\rho$ and the initial condition $\rho(x,\:t^n)=\rho_i^{n}$ for $x\in (x_{i-\frac{1}{2}} ,\:x_{i+\frac{1}{2}})$.
\begin{figure}[H]
 \centering
 \includegraphics[width=\textwidth,keepaspectratio]{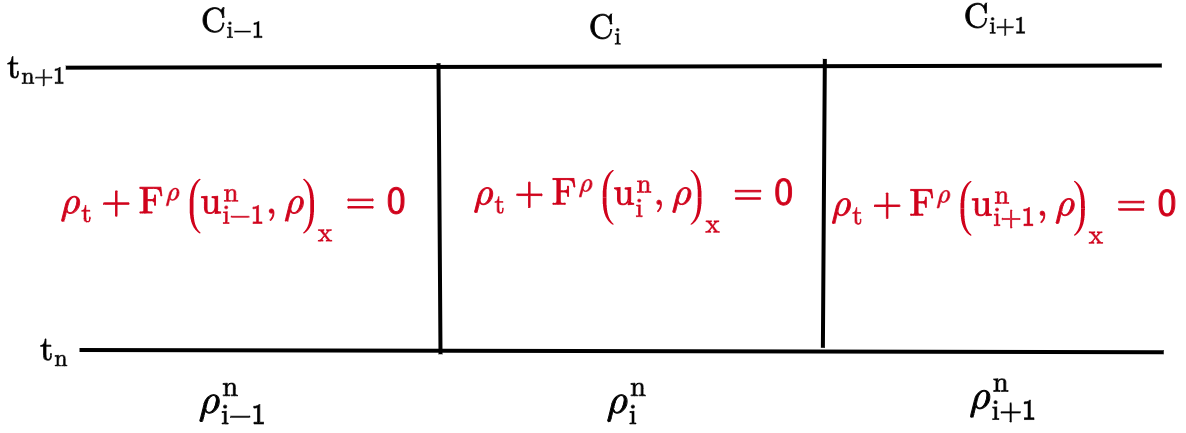}
 \caption{Local Riemann Problem Structure for \eqref{rei1s}}
 \label{FigL0}
\end{figure}
Hence, the problem reduces to the corresponding local Riemann problem, as in \cite{adimurthi2016godunov, aggarwal2016godunov}, 
 \begin{equation}\label{rei1s}
 \rho_t+l^{\rho}(x,\:\rho)_x=0\, \, \, \, \, \, \, \, \mbox{in} \, \, \, (x_{i-\frac{1}{2}} ,\:x_{i+\frac{1}{2}})\times(t^n,\:t^{n+1})
,\end{equation}
 where
\[l(x,\:\rho)=\displaystyle \left\{\begin{array}{ccl}
F^\rho(u_i^n,\:\rho) & \, \, \mbox{if}\, \, & x<x_{i+\frac{1}{2}},\:\\
F^\rho(u_{i+1}^n,\:\rho) & \, \, \mbox{if}\, \, & x>x_{i+\frac{1}{2}}
,\end{array}\right.\]
with the initial data
\[\rho(x,\:t^n)=\displaystyle \left\{\begin{array}{ccl}
\rho_i^{n} & \, \, \mbox{if}\, \, & x<x_{i+\frac{1}{2}},\:\\
\rho_{i+1}^{n}& \, \, \mbox{if}\, \, & x>x_{i+\frac{1}{2}}
.\end{array}\right.\]
Each local Riemann Problem \eqref{rei1s} at the interface $x_{i+\frac{1}{2}}$ is of the form \eqref{scalar} and the flux at each interface 
is given by $F_0(g(u_i^n),\:g(u_{i+1}^n),\:\rho_i^n,\:\rho_{i+1}^n).$
 For the second equation \begin{equation}\label{second}
 w_t+\Big(wg\Big(\frac{w}{\rho(x,\:t)}\Big)\Big)_x=0,\end{equation}let us assume that $\rho(x,\:t)$ is a known function at time $t^n$ which is allowed to be discontinuous at the space discretization points\@. Therefore on each $C_i\times (t^n,\:t^{n+1})$, we look at the conservation law, 
\begin{equation*}
{w _{t}+\Big(wg\Big(\frac{w}{\rho_i^n}\Big)\Big)_x}=0
,\end{equation*}
 with the initial condition $w(x,\:t^n)=w_i^{n}$ for $x\in (x_{i-\frac{1}{2}} ,\:x_{i+\frac{1}{2}})$. {Hence, the problem reduces to the corresponding local Riemann problem 
 \begin{equation}\label{rei4s}
 w_t+l^{w}(x,\:w)_x=0\, \, \, \, \, \, \, \, \mbox{in} \, \, \, (x_{i-\frac{1}{2}} ,\:x_{i+\frac{1}{2}})\times(t^n,\:t^{n+1})
,\end{equation}
 where
\[l^{w}(x,\:w)=\displaystyle \left\{\begin{array}{ccl}
\displaystyle wg\Big(\frac{w}{\rho_i^n}\Big) & \, \, \mbox{if}\, \, & x<x_{i+\frac{1}{2}},\:\\[2mm]
\displaystyle wg\Big(\frac{w}{\rho_{i+1}^n}\Big) & \, \, \mbox{if}\, \, & x>x_{i+\frac{1}{2}}
,\end{array}\right.\]
with the initial data
\[w(x,\:t^n)=\displaystyle \left\{\begin{array}{ccl}
w_i^{n} & \, \, \mbox{if}\, \, & x<x_{i+\frac{1}{2}},\:\\
w_{i+1}^{n}& \, \, \mbox{if}\, \, & x>x_{i+\frac{1}{2}}
.\end{array}\right.\]
Using theory of discontinuous flux for convex conservation laws of \cite{adimurthi2016godunov, aggarwal2016godunov}, we get the following required flux at each interface $x_{i+\frac{1}{2}},\:$
\begin{equation*}
 \hat{F}^w(U_{i}^{n},\:U_{i+1}^{n})=\max\Big(F^{w}\left(\rho_{i}^{n},\:\max(w_i^n,\:0)\right),\:F^{w}\left(\rho_{i+1}^{n},\:\min(w_{i+1}^n,\:0)\right)\Big).
\end{equation*}}
We now provide a higher order extension of  the scheme.
\subsection{Higher Order Extension of \textbf{(DDF)} scheme}\label{hi}
We first consider the forward Euler time discretization. Let $\overline{U}_i^n$ be the cell average of $U(x,\:t)$ in the cell $C_i$ at time $t^n$. Let $U_{i+\frac{1}{2}}^{L,\:R}$ be the second-order approximations of $U(x,\:t^n)$ at the cell interface $x_{i+\frac{1}{2}}$ within the cell $C_i$ and $C_{i+1}$, which are reconstructed from the cell average $\overline{U}_i^n$. Let $\left(U_{i+\frac{1}{2}}^{L},\:U_{i+\frac{1}{2}}^{R}\right):=\left(\rho_{i+\frac{1}{2}}^{L},\:\rho_{i+\frac{1}{2}}^{R},\:w_{i+\frac{1}{2}}^{L},\:w_{i+\frac{1}{2}}^{R}\right)$ and $u_{i+\frac{1}{2}}^{ L,\:R}=\displaystyle\frac{w_{i+\frac{1}{2}}^{ L,\:R}}{\rho_{i+\frac{1}{2}}^{ L,\:R}}.$ The finite volume scheme approximating \eqref{pressurelessU} is given by:
\begin{equation}\label{seco}
\overline{U}_i^{n+1}=\overline{U}_i^n-
\lambda\left[\hat{F}\left(U_{i+\frac{1}{2}}^{L},\:U_{i+\frac{1}{2}}^{R}\right)-\hat{F}\left(U_{i-\frac{1}{2}}^{L},\:U_{i-\frac{1}{2}}^{R}\right) \right], 
\end{equation}
where $\hat{F}$ is given by \eqref{Numer1} and is the first-order numerical flux associated with the flux $F$ at the interface
$ x_{i+\frac{1}{2}}$ at time $t^n$\@, which preserves the positivity of $\rho$ and bounds of the velocity $u$. 
For $z=\rho,\:u, $ let
\[z_{i+\frac{1}{2}}^{R,\:L}:=p^z_{i}(x_{i\pm\frac{1}{2}}), \overline{z}_i^n=\frac{1}{2}\left(z_{i-\frac{1}{2}}^{R}+ z_{i+\frac{1}{2}}^{L}\right),\:\]
 where $p^{z}_i(x)$ is the linear approximation in the cell $C_i$ for the piecewise constant solution $z_i^n$ such that \[\frac{1}{h}\int_{C_i}p^{z}_i(x)dx=\overline{z}_i^n,\:p^{z}_i(x)=\overline{z}_i^n+ \sigma^z_i(x-x_i).\]
The slope $\sigma^{z}_i$ is controlled by the choice of a suitable limiter ensuring that the physical properties of the system are preserved. Define $p^{w}_i(x):=p^{\rho}_i(x)p^{u}_i(x).$
For higher order time discretization, we use 2nd order Strong Stability Preserving Runge Kutta Method of \cite{shu1989efficient}, which we summarize below:
\begin{eqnarray}\label{RK}
 U_i^{*}&=&U_i^n-\lambda\left[\hat{F}\left(U_{i+\frac{1}{2}}^{L},\:U_{i+\frac{1}{2}}^{R}\right)-\hat{F}\left(U_{i-\frac{1}{2}}^{L},\:U_{i-\frac{1}{2}}^{R}\right) \right], \\
 \label{RK1}
U_i^{**}&=&U_i^{*}-\lambda\left[\hat{F}\left(U_{i+\frac{1}{2}}^{* ,\:L}, U_{i+\frac{1}{2}}^{* ,\:R}\right)-\hat{F}\left(U_{i-\frac{1}{2}}^{* ,\:L}, U_{i-\frac{1}{2}}^{* ,\:R}\right) \right], \\
\label{RK2}
U_i^{n+1}&=&\frac{1}{2}(U_i^n+U_i^{**}), 
 \end{eqnarray}
 $U_{i+\frac{1}{2}}^{*,  \:L,\:R}$ is reconstructed from $U_i^{*}, $ in the same way as 
 $U_{i+\frac{1}{2}}^{ L,\:R}$ is reconstructed from $U_i^n.$

\section{Stability and Convergence Analysis}\label{st}
We now prove the numerical solutions given by the first order \textbf{(DDF)} scheme are entropy stable in the framework of \cite{bouchut1994zero}, satisfy the physical properties of the state variables, and converge to the weak solution of \textbf{(GPGD)} in one space dimension. 

Let us define the piece-wise constant approximate solution to \textbf{(GPGD)}, 
$U_{h} (x,\:t)=\begin{pmatrix}\rho_{h}\\w_{h}
 \end{pmatrix}$ such that
 $U_{h}(x,\:t) = U^{n}_{i}=\begin{pmatrix}\rho_i^n\\w_i^n \end{pmatrix},\:
 t\in[t^n,\:t^{n+1}),\:x\in C_i,\:n \in\mathbb{N},\:
 i\in\mathbb{Z},$
where $U_i^n$ is the numerical solution obtained by the $3$-points algorithm \eqref{Numer}. 
Physically, the density $\rho\ge0$ and the velocity $u$ satisfies the maximum principle.
 Let \begin{eqnarray*}S=\left\{(\rho,\:w):\rho\ge0,\:
 m\rho\le w\le M\rho\right\}\end{eqnarray*}
 where $m=\min_{x\in\R}g(u_0(x)),\:M=\max_{x\in\R}g(u_0(x)).$
{We denote $F^{z,\:n}_{i+\frac{1}{2}}:=\hat{F}^z(U_{i}^{n},\:U_{i+1}^{n}), z=\rho,\:w.$}
 We then have the following theorem:
 \begin{theorem}\label{theorem1}
 Under the CFL--like condition \eqref{cfl}, 
 $U^{n+1}\in S$ if $U^{n}\in S$.
 \end{theorem}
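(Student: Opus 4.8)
I would argue by induction on $n$, treating one cell at a time: fixing $i$ and assuming $U_j^n\in S$ for all $j$, verify the three inequalities $\rho_i^{n+1}\ge 0$, $m\rho_i^{n+1}\le w_i^{n+1}$ and $w_i^{n+1}\le M\rho_i^{n+1}$ that define the convex cone $S$. Write $a_j:=g(u_j^n)$, $u_j^n=w_j^n/\rho_j^n$, for the frozen transport speeds (with a fixed convention for $u_j^n$ on empty cells, where $U_j^n\in S$ forces $w_j^n=0$). Since all $\rho_j^n\ge 0$, the sub-cases of \eqref{over} requiring $\rho_l<0$ or $\rho_r<0$ never occur, so by \eqref{over}, \eqref{FrhoN} and \eqref{FwN} the pair $\bigl(\hat F^\rho_{i+\frac{1}{2}},\hat F^w_{i+\frac{1}{2}}\bigr)$ depends only on the signs of $a_i,a_{i+1}$, and equals: $\bigl(a_i\rho_i^n,\ F^w(\rho_i^n,w_i^n)\bigr)$ if $a_i\ge0,\ a_{i+1}>0$; $\bigl(a_{i+1}\rho_{i+1}^n,\ F^w(\rho_{i+1}^n,w_{i+1}^n)\bigr)$ if $a_i<0,\ a_{i+1}\le0$; $(0,0)$ if $a_i<0<a_{i+1}$; and $\bigl(0,\ \max(F^w(\rho_i^n,w_i^n),F^w(\rho_{i+1}^n,w_{i+1}^n))\bigr)$ if $a_i\ge0\ge a_{i+1}$ --- the last of these being the \emph{overcompressive} interface type. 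These use $g(0)=0$, the one-minimum condition \eqref{g1}, the compatibility of the signs of $u_j^n$ and $g(u_j^n)$, and $F^w(\rho,w)=\rho\,u\,g(u)\ge 0$.

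For $\rho_i^{n+1}\ge 0$: substituting these into the first component of \eqref{Numer} one checks, in every sign pattern of $(a_{i-1},a_i,a_{i+1})$, that $\rho_i^{n+1}=(1-\lambda\theta_i)\rho_i^n+\lambda\theta_i^{-}\rho_{i-1}^n+\lambda\theta_i^{+}\rho_{i+1}^n$ with $\theta_i^{\pm}\ge 0$ and $0\le\theta_i^{-}+\theta_i^{+}\le\theta_i\le 2\max_j|a_j|$; the CFL--like condition \eqref{cfl} makes $1-\lambda\theta_i\ge 0$, so $\rho_i^{n+1}$ is a nonnegative combination of nonnegative numbers. This is just the monotonicity of the underlying scalar discontinuous-flux scheme of \cite{adimurthi2005optimal}; a by-product is that $\rho_i^{n+1}=0$ forces every summand to vanish.

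The heart of the matter is $m\rho_i^{n+1}\le w_i^{n+1}\le M\rho_i^{n+1}$, equivalently $u_i^{n+1}\in[m,M]$ when $\rho_i^{n+1}>0$ (the case $\rho_i^{n+1}=0$ being handled by the by-product above, which then forces $w_i^{n+1}=0$). The decisive remark is that whenever \emph{neither} of the interfaces $x_{i\pm\frac{1}{2}}$ is overcompressive, the momentum flux there is the mass flux times an \emph{upwind} velocity, $\hat F^w_{i\pm\frac{1}{2}}=u^n_{\star_\pm}\hat F^\rho_{i\pm\frac{1}{2}}$ with $\star_\pm\in\{i-1,i,i+1\}$; inserting this together with the $\rho$-identity of the previous step gives $w_i^{n+1}=\sum_j\beta_j u_j^n$ and $\rho_i^{n+1}=\sum_j\beta_j$ with $\beta_j\ge 0$, so $u_i^{n+1}$ is a convex average of velocities already lying in $[m,M]$ and the two bounds follow. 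If one of the two neighbouring interfaces \emph{is} overcompressive, then its $\rho$-flux vanishes (no mass is transported --- this is exactly where the $\delta$--shock forms), but its $w$-flux $\max(F^w(\rho_i^n,w_i^n),F^w(\rho_{i+1}^n,w_{i+1}^n))\ge 0$ need not, and the convex-combination structure breaks. Here I would expand $w_i^{n+1}-m\rho_i^{n+1}$ and $M\rho_i^{n+1}-w_i^{n+1}$ directly from \eqref{Numer}: the overcompressive interface contributes a single signed term $\mp\lambda\hat F^w$, which I would bound by $\lambda C\rho$ with $C=\max_{u\in[m,M]}|u\,g(u)|$ using $0\le F^w(\rho,w)\le C\rho$; the CFL--like condition \eqref{cfl} is precisely what dominates this term by the nonnegative slack $w_i^n-m\rho_i^n$ (resp. $M\rho_i^n-w_i^n$) together with the contribution $(u^n_\star-m)\hat F^\rho\ge 0$ (resp. $(M-u^n_\star)\hat F^\rho\ge 0$) of the other interface, which is treated exactly as in the regular case.

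The main obstacle is this overcompressive configuration: it is the only one in which the ``momentum flux $=$ mass flux $\times$ upwind velocity'' decoupling fails, so the incremental/convex-combination bookkeeping that trivialises every other case is unavailable, the coefficients must be estimated by hand, and the CFL--like condition \eqref{cfl} does genuine work there --- this is the ``non trivial and more involved'' estimate alluded to in \S\ref{intro}. The remaining difficulties are clerical: running through all sign patterns of $(a_{i-1},a_i,a_{i+1})$, the boundary patterns where some $a_j=0$ (which sit between the interface types of \eqref{over}), and the cells with $\rho^n$ or $\rho^{n+1}$ equal to zero, all handled by the empty-cell convention together with the vanishing-summand remark.
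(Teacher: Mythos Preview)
Your treatment of positivity via monotonicity of the $\rho$-scheme is exactly the paper's Lemma~\ref{lemma1}. For the velocity bounds in the non-overcompressive configurations, your observation that $\hat F^w_{i\pm1/2}=u^n_{\star_\pm}\hat F^\rho_{i\pm1/2}$ for an upwind index $\star_\pm$, yielding $w_i^{n+1}=\sum_j\beta_j u_j^n$ and $\rho_i^{n+1}=\sum_j\beta_j$ with $\beta_j\ge0$, is precisely the content of the paper's incremental form \eqref{incre}--\eqref{C}; the two presentations are equivalent.

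The genuine gap is in your overcompressive step. You propose to bound the stray term $\lambda\hat F^w$ by $\lambda C\rho$ with $C=\max_{[m,M]}|u\,g(u)|$ and then let \eqref{cfl} absorb it into the slack $w_i^n-m\rho_i^n=\rho_i^n(u_i^n-m)$. This does not close, for two reasons: the $\rho$ in your bound is in general the \emph{neighbour's} density (since $\hat F^w_{i+1/2}$ is a maximum of $F^w$ values from both sides and the right-hand term can dominate), while the slack scales with $\rho_i^n$; and \eqref{cfl} controls $\lambda|g(u)|$, not $\lambda|u\,g(u)|$, so $\lambda C$ is not small. Concretely, with $g(u)=u$, $u_{i-1}^n=0$, $u_i^n=\varepsilon$ small and positive, $u_{i+1}^n=-1$, $\rho_i^n$ small and $\rho_{i+1}^n$ large, one has $\hat F^\rho_{i\pm1/2}=0$ but $\hat F^w_{i+1/2}=\rho_{i+1}^n$, and your inequality $\lambda\hat F^w_{i+1/2}\le\rho_i^n(u_i^n-m)$ fails for any $\lambda$ allowed by \eqref{cfl}.

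The paper does \emph{not} attempt a direct bound here: it keeps the incremental form \eqref{incre} even across the overcompressive interface and verifies $C^n_{i-1/2},D^n_{i+1/2}\ge0$, $C^n_{i-1/2}+D^n_{i+1/2}\le1$ (this is Lemma~\ref{lemma2}, proved as the first-order specialisation of Lemma~\ref{cd}, Case~2). The mechanism replacing your crude estimate is the sign-structure inequality $\dfrac{u_i^n}{u_i^n-u_{i+1}^n}\le1$ (valid because $u_i^n>0\ge u_{i+1}^n$ at an overcompressive interface), which turns the $\hat F^w$ contribution in $D^n_{i+1/2}$ into something controlled by $\lambda|g(u)|$ alone. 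In other words, rather than bounding $\hat F^w$ outright, the paper divides it by $u_{i+1}^n-u_i^n$ first---the extra denominator is what saves the CFL budget. If you want to repair your argument, you should retain the incremental form in the overcompressive case as well: compute $D^n_{i+1/2}=-\lambda\hat F^w_{i+1/2}\big/\big(\rho_i^{n+1}(u_{i+1}^n-u_i^n)\big)$, check $D\ge0$, and then show $\rho_i^{n+1}(1-C-D)=\rho_i^n+\lambda\hat F^w_{i+1/2}/(u_{i+1}^n-u_i^n)\ge0$ by splitting on which side realises the max in $\hat F^w_{i+1/2}$ and invoking the sign inequality together with \eqref{cfl}. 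That case split---not a single global bound---is where the ``non trivial and more involved'' work you anticipated actually lives.
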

 The above theorem is a consequence of the following two lemmas:
 \begin{lemma}[Positivity of $\rho^n$]\label{lemma1}
For each $n\in \mathbf{N},\:$
 \[\rho^n>0,\:\implies\rho^{n+1}>0,\:\] under the condition \begin{equation}\label{cfl}
 \lambda\max_{i,\:n}|g(u_{i}^n)|\leq 1.
 \end{equation}
 \end{lemma}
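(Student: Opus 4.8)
The plan is to regard the $\rho$-update \eqref{Numer}--\eqref{FrhoN}, with the velocities $u_j^n$ frozen at time $t^n$, as a linear three-point scheme in $\rho_{i-1}^n,\rho_i^n,\rho_{i+1}^n$ and to exhibit $\rho_i^{n+1}$ as a combination of these three values with non-negative weights, the weight of $\rho_i^n$ being controlled by the CFL number. The first step is to simplify the interface flux: since $\rho^n>0$, in the formula \eqref{over} for $F_0\big(g(u_i^n),g(u_{i+1}^n),\rho_i^n,\rho_{i+1}^n\big)$ the sub-cases requiring $\rho_l<0$ or $\rho_r<0$ never occur, so
\[
\hat{F}^{\rho}(U_i^n,U_{i+1}^n)=
\begin{cases}
g(u_i^n)\,\rho_i^n, & g(u_i^n)\ge 0,\ g(u_{i+1}^n)>0,\\
g(u_{i+1}^n)\,\rho_{i+1}^n, & g(u_i^n)<0,\ g(u_{i+1}^n)\le 0,\\
0, & \text{otherwise.}
\end{cases}
\]
Writing $s^{+}=\max(s,0)$ and $s^{-}=\max(-s,0)$, this is the single identity
\[
\hat{F}^{\rho}(U_i^n,U_{i+1}^n)=g(u_i^n)^{+}\rho_i^n\,\mathbf{1}_{\{g(u_{i+1}^n)>0\}}-g(u_{i+1}^n)^{-}\rho_{i+1}^n\,\mathbf{1}_{\{g(u_i^n)<0\}},
\]
which I would verify by running through the sign patterns of $\big(g(u_i^n),g(u_{i+1}^n)\big)$.

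Substituting this identity for both interface fluxes in \eqref{Numer} and collecting terms by cell index yields $\rho_i^{n+1}=c_{i-1}\rho_{i-1}^n+c_i\rho_i^n+c_{i+1}\rho_{i+1}^n$, where $c_{i-1}=\lambda\,g(u_{i-1}^n)^{+}\mathbf{1}_{\{g(u_i^n)>0\}}\ge 0$, $c_{i+1}=\lambda\,g(u_{i+1}^n)^{-}\mathbf{1}_{\{g(u_i^n)<0\}}\ge 0$, and $c_i=1-\lambda\,g(u_i^n)^{+}\mathbf{1}_{\{g(u_{i+1}^n)>0\}}-\lambda\,g(u_i^n)^{-}\mathbf{1}_{\{g(u_{i-1}^n)<0\}}$. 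Since at most one of the two subtracted terms in $c_i$ is nonzero and each is bounded by $|g(u_i^n)|$, the CFL condition \eqref{cfl} gives $c_i\ge 1-\lambda|g(u_i^n)|\ge 0$, with $c_i>0$ whenever that estimate is strict. As $\rho_{i-1}^n,\rho_i^n,\rho_{i+1}^n>0$ and all three weights are non-negative, $\rho_i^{n+1}\ge c_i\rho_i^n>0$, and since $i\in\mathbb{Z}$ is arbitrary, $\rho^{n+1}>0$.

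I expect the only real work to be the bookkeeping in the first step: checking the compact flux identity for every sign combination, and then confirming in the assembled update that the sole negative contribution proportional to $\rho_i^n$ is the ``outgoing'' flux from cell $i$, of size exactly $\lambda|g(u_i^n)|\rho_i^n\le\rho_i^n$ under \eqref{cfl}, whereas every incoming contribution is a non-negative multiple of a strictly positive neighbour. There is no conceptual obstacle; the content is simply that a cell loses at most the CFL-limited fraction $\lambda|g(u_i^n)|$ of its mass per time step. The one mildly delicate point is strict positivity when \eqref{cfl} holds with equality, where $c_i$ may vanish (a cell emptied in a single step exactly at the CFL limit); this is the usual CFL-boundary degeneracy, and the statement should be read with \eqref{cfl} effectively strict, the non-strict version yielding only $\rho^{n+1}\ge 0$.
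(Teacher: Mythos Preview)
Your argument is correct and is essentially the same monotonicity proof given in the paper: the paper shows $\partial\rho_i^{n+1}/\partial\rho_j^n\ge 0$ for $j\in\{i-1,i,i+1\}$ by computing exactly the same case-by-case expressions you collect into the coefficients $c_{i-1},c_i,c_{i+1}$, with the CFL condition ensuring $c_i\ge 0$. Your explicit identification of the CFL-boundary degeneracy (where $c_i$ may vanish and only $\rho^{n+1}\ge 0$ is guaranteed) is a useful remark that the paper's proof passes over silently.
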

 \begin{proof}
 {It can be easily shown that $F^{\rho,\:n}_{i+\frac{1}{2}}$ given by \eqref{FrhoN}} is increasing in the variable $\rho_i^n$ and decreasing in the variable $\rho_{i+1}^n$
%
and hence,
\begin{equation*}
 1-\lambda\left(\frac{\partial F^{\rho,\:n}_{i+\frac{1}{2}}}{\partial\rho_i^n}-\frac{\partial F^{\rho,\:n}_{i-\frac{1}{2}}}{\partial\rho_i^n}\right)= { \left\{\begin{array}{ccl}
1-\lambda g(u_{i}^n)& \, \mbox{if}\, &g(u_{i}^n)>0,\:g(u_{i+1}^n)>0,\:\\[.1mm] 
 1+\lambda g(u_{i}^n)& \, \mbox{if}\, &g(u_{i-1}^n)<0,\:g(u_{i}^n)<0,\:\\[.1mm] 
 1 & \, \mbox{else}, 
 \end{array}\right.}
\end{equation*}
which shows that $\displaystyle\frac{\partial \rho_i^{n+1}}{\partial\rho_j^n}\geq 0\, \, \, \forall i,\:j=\{i\pm1,\:i\}$ under the condition \eqref{cfl} which gives the desired result.
 \end{proof} 
Using \eqref{Numer}, we have the following:
\[
 \rho_i^{n+1}(u_i^{n+1}-u_i^n)=
-\lambda(F^{w,\:n}_{i+\frac{1}{2}}-F^{\rho,\:n}_{i+\frac{1}{2}} u_i^n)+\lambda(F^
{w,\:n}_{i-\frac{1}{2}}-F^{\rho,\:n}_{i-\frac{1}{2}} u_i^n), \]
 and hence, we have
\begin{equation}\label{incre}
 u_i^{n+1}=u_i^n(1-\displaystyle C^{n}_{i-\frac{1}{2}}-\displaystyle D^{n}_{i+\frac{1}{2}})+u_{i-1}^n\displaystyle C^{n}_{i-\frac{1}{2}}+u_{i+1}^{n}\displaystyle D^{n}_{i+\frac{1}{2}},\:
\end{equation}
where for each $i,\:n,$
\begin{equation}\label{C}
 \displaystyle C^{n}_{i-\frac{1}{2}}=-\displaystyle\lambda\frac{F^{w,\:n}_{i-\frac{1}{2}}-F^{\rho,\:n}_{i-\frac{1}{2}}u_i^{n}}{\rho_i^{n+1}(u_{i}^{n}-u_{i-1}^n)}, \quad \displaystyle D^{n}_{i+\frac{1}{2}}=-\displaystyle\lambda\frac{F^{w,\:n}_{i+\frac{1}{2}}-F^{\rho,\:n}_{i+\frac{1}{2}}u_i^{n}}{\rho_i^{n+1}(u_{i+1}^{n}-u_{i}^n)}.
\end{equation}
 Using the above incremental form, we have the following result which will be proved in \textbf{Lemma~\ref{cd}}.
 \begin{lemma}[Bounds on $g(u^n)$]\label{lemma2}
 Under the CFL-like condition \eqref{cfl},
 $C^{n}_{i-\frac{1}{2}} ,\:D^{n}_{i+\frac{1}{2}}\ge0$ and $C^{n}_{i-\frac{1}{2}}+D^{n}_{i+\frac{1}{2}}\le1$. Also, if 
 \[g^{-1}(m)\rho_i^n\le w_i^n\le g^{-1}(M)\rho_i^n,\:m=\min_i g(u_{i}^0),\:M=\max_i g(u_{i}^0), \]
 then \[g^{-1}(m)\rho_i^{n+1}\le w_i^{n+1}\le g^{-1}(M)\rho_i^{n+1}.\]\end{lemma}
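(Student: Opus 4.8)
The plan is to derive the stated bounds on $w^{n+1}$ from the claim about the incremental coefficients, and to establish that claim by a case analysis. Granting for the moment that $C^n_{i-\frac12},\,D^n_{i+\frac12}\ge 0$ and $C^n_{i-\frac12}+D^n_{i+\frac12}\le 1$: the three coefficients $1-C^n_{i-\frac12}-D^n_{i+\frac12}$, $C^n_{i-\frac12}$, $D^n_{i+\frac12}$ in \eqref{incre} are then nonnegative and sum to $1$, so $u_i^{n+1}$ is a convex combination of $u_{i-1}^n,u_i^n,u_{i+1}^n$ and hence lies in $[\min_{j}u_j^{n},\,\max_{j}u_j^{n}]$ with $j$ ranging over $\{i-1,i,i+1\}$. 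Since $g$ is non-decreasing and $\rho_i^n>0$ by \textbf{Lemma~\ref{lemma1}}, the hypothesis $g^{-1}(m)\rho_i^n\le w_i^n\le g^{-1}(M)\rho_i^n$ is equivalent to $g^{-1}(m)\le u_i^n\le g^{-1}(M)$, and this interval is propagated by the convex-combination bound; multiplying back by $\rho_i^{n+1}>0$ gives the asserted bounds on $w_i^{n+1}$. Read together with \textbf{Lemma~\ref{lemma1}}, this also resolves the mild circularity in \eqref{cfl}: one fixes $\lambda$ with $\lambda\max(|m|,|M|)\le 1$ once and for all, and then the invariant interval for $u$, the bound \eqref{cfl}, and $\rho^{n+1}>0$ propagate together by induction on $n$.

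It therefore remains to prove $0\le C^n_{i-\frac12},D^n_{i+\frac12}$ and $C^n_{i-\frac12}+D^n_{i+\frac12}\le 1$, which is the content of \textbf{Lemma~\ref{cd}}. I would work directly from \eqref{C}, whose numerators are assembled from the density fluxes $F^{\rho,n}_{i\pm\frac12}$ (given by \eqref{FrhoN}, \eqref{over}) and the momentum fluxes $F^{w,n}_{i\pm\frac12}$ (given by \eqref{FwN}), and whose denominators are $\rho_i^{n+1}(u_i^n-u_{i-1}^n)$ and $\rho_i^{n+1}(u_{i+1}^n-u_i^n)$. The case split is by the signs of $g(u_{i-1}^n),g(u_i^n),g(u_{i+1}^n)$; under \eqref{g1}, together with $g$ non-decreasing and $g(0)=0$, these coincide with the signs of $u_{i-1}^n,u_i^n,u_{i+1}^n$, so \eqref{over} and \eqref{FwN} become completely explicit in every configuration. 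Then, interface by interface, one checks that either both $F^\rho$ and $F^w$ vanish (so the corresponding coefficient is $0$), or $F^w$ equals the relevant one-sided velocity times $F^\rho$ so that the numerator is a sign-definite multiple of $u_i^n-u_{i\mp1}^n$ and the coefficient is $\ge 0$, or $F^\rho=0$ at an overcompressive interface while the nonnegative $F^w$ alone supplies the correct sign; the degenerate case of equal neighbouring velocities is covered by observing that the numerator then vanishes too and the coefficient is read as $0$.

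For the bound $C^n_{i-\frac12}+D^n_{i+\frac12}\le 1$ the plan is to insert the per-configuration expressions for the numerators into the identity $\rho_i^{n+1}=\rho_i^n-\lambda(F^{\rho,n}_{i+\frac12}-F^{\rho,n}_{i-\frac12})$ and use \eqref{cfl}: the $F^\rho$-parts of the numerators of $C$ and $D$ cancel against the corresponding $F^{\rho,n}_{i\pm\frac12}$ inside $\rho_i^{n+1}$, and the remaining terms are absorbed by $\lambda|g(u_i^n)|\le 1$. This last step is the main obstacle. Unlike for the decoupled systems discussed at the end of the paper, the coefficients here are built from the \emph{nonlinear, case-split} Godunov fluxes of two \emph{coupled} equations, the updated density $\rho_i^{n+1}$ sits in both denominators and must be bounded below uniformly, and $C$ and $D$ must be estimated \emph{jointly}; so the argument is a careful accounting over all sign patterns of $(u_{i-1}^n,u_i^n,u_{i+1}^n)$, including the borderline cases where some $g(u_j^n)=0$, rather than a short estimate.
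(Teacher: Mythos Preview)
Your proposal is correct and follows essentially the same route as the paper. The paper also deduces the bounds on $w_i^{n+1}$ from the convex-combination form \eqref{incre} once $C^n_{i-\frac12},D^n_{i+\frac12}\ge 0$ and $C^n_{i-\frac12}+D^n_{i+\frac12}\le 1$ are known, and it proves those coefficient estimates by exactly the kind of case analysis you describe: split on the signs of $g(u_{i-1}^n),g(u_i^n),g(u_{i+1}^n)$ (equivalently, on whether each interface is upwind/overcompressive), read off $F^{\rho},F^{w}$ explicitly, and show $\rho_i^{n+1}(1-C^n_{i-\frac12}-D^n_{i+\frac12})\ge 0$ using the update formula for $\rho_i^{n+1}$ and the CFL bound. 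The only organisational difference is that the paper carries out this case analysis inside \textbf{Lemma~\ref{cd}} for the second-order scheme and then specialises to $\overline{\sigma}^u_k\equiv 0$ to recover the first-order result, whereas you work directly with the first-order fluxes; the computations are the same once the slopes vanish.
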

The following lemma is an easy consequence of the above lemmas and establish conservative property of $U$.
 \begin{lemma}($\L1$ Stability of $\rho,\:w,\:wu, wg(u)$):\label{theorem2}
 For for any time $t^n,\:n\ge0$, the following holds true:
\[\norma{z^n}_{\L1 (\R\times \R^+)}\leq K_z\norma{z^0}_{\L1 (\R\times \R^+)}, \] where 
$z$ can be $\rho,\:w,\:wu,\:w{g(u)}$ and the constant $K_z$ depends on $\norma{u_0}_{\L\infty(\R\times\R^+)}$.
 \end{lemma}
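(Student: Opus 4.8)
The plan is to deduce the statement from two facts already established: the $\rho$--update of the scheme \eqref{Numer} is in conservation form, and the discrete solution never leaves the invariant region $S$ (Theorem~\ref{theorem1}, equivalently Lemmas~\ref{lemma1}--\ref{lemma2}). The latter gives, for every $i$ and $n$, $\rho_i^n>0$ and $u_i^n=w_i^n/\rho_i^n\in[\,\min_j u_j^0,\ \max_j u_j^0\,]\subset[-\norma{u_0}_{\L\infty},\ \norma{u_0}_{\L\infty}]$, hence the pointwise bounds
\begin{gather*}
|w_i^n| \le \norma{u_0}_{\L\infty}\,\rho_i^n, \qquad |w_i^n u_i^n| = \rho_i^n (u_i^n)^2 \le \norma{u_0}_{\L\infty}^2\,\rho_i^n, \\
|w_i^n g(u_i^n)| \le \norma{u_0}_{\L\infty}\Big(\sup\nolimits_{|s|\le\norma{u_0}_{\L\infty}}|g(s)|\Big)\rho_i^n .
\end{gather*}
Thus, once an $\L1$ bound on $\rho_h(\cdot,t^n)$ is known, the bounds for $w_h$, $w_hu_h$ and $w_hg(u_h)$ follow by summing these inequalities over $i\in\Z$ and multiplying by $h$.

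To obtain the $\rho$ bound, sum the first component of \eqref{Numer} over $i\in\Z$: the numerical fluxes telescope, and the CFL condition \eqref{cfl} (which makes the scheme propagate at most one cell per step, hence have finite speed of propagation) together with the integrability of $\rho_0$ lets the contributions at $\pm\infty$ be discarded, so $\sum_i\rho_i^{n+1}h=\sum_i\rho_i^n h$. Since $\rho_i^n>0$ by Lemma~\ref{lemma1}, this reads $\norma{\rho^{n+1}}_{\L1(\R)}=\norma{\rho^n}_{\L1(\R)}$, hence by induction $\norma{\rho^n}_{\L1(\R)}=\norma{\rho^0}_{\L1(\R)}$ for all $n$ and $K_\rho=1$. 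Substituting into the displayed pointwise bounds gives $\norma{w^n}_{\L1(\R)}\le\norma{u_0}_{\L\infty}\norma{\rho^0}_{\L1(\R)}$, $\norma{w^nu^n}_{\L1(\R)}\le\norma{u_0}_{\L\infty}^2\norma{\rho^0}_{\L1(\R)}$, and the analogous bound for $w^ng(u^n)$ with the extra factor $\sup\nolimits_{|s|\le\norma{u_0}_{\L\infty}}|g(s)|$; in each case the constant depends only on $\norma{u_0}_{\L\infty}$ and $g$, which is the claimed $\L1$ stability (the natural reference norm for $w$, $wu$, $wg(u)$ being $\norma{\rho^0}_{\L1}$, since $\rho$--mass is exactly conserved). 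Note that conservation of $w$ is not used here: $w_h$ changes sign, so its conservative update by itself does not control $\norma{w^n}_{\L1}$ --- the invariant region is what does the work.

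Finally, the obstacle: there is really none of substance, the lemma being, as the text announces, an easy corollary of Lemmas~\ref{lemma1}--\ref{lemma2}. The only point that deserves explicit care is the summation by parts behind $\rho$--mass conservation, i.e.\ checking that the discrete flux telescopes with no residual at $\pm\infty$; this is a standard argument once finite speed of propagation (from \eqref{cfl}) and integrability of $\rho_0$ are invoked, and everything afterwards is elementary algebra with the invariant-region bounds.
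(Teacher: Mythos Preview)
Your proposal is correct and follows exactly the route the paper has in mind: the text itself offers no detailed argument, merely stating that the lemma ``is an easy consequence of the above lemmas'' (i.e.\ Lemmas~\ref{lemma1}--\ref{lemma2}) together with the conservative property, which is precisely your combination of mass conservation for $\rho$ (telescoping the $\rho$--update) and the invariant-region bounds on $u^n$. Your parenthetical remark that the natural reference norm for $w$, $wu$, $wg(u)$ is $\norma{\rho^0}_{\L1}$ rather than $\norma{z^0}_{\L1}$ is well taken; the statement as printed is slightly loose in this respect, but the substance you prove is what is actually used downstream in Theorems~\ref{thm1} and~\ref{main}.
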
 
 \begin{theorem}[Existence of Weak Solution]\label{thm1}
 For every $\phi\in C^{\infty}_{c}(\R \times \R^+), $ and for $z=\rho,\:w$, we have \begin{equation*}\begin{aligned} \lim\limits_{h \rightarrow 0}\displaystyle\int\limits_{\R \times\R^+ }\displaystyle z_h \phi_t + z_h g(u_h)\phi_x =0,
\end{aligned}\end{equation*}
 \end{theorem}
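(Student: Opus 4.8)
The plan is to pass to the limit in the discrete scheme \eqref{Numer}, viewing each equation of the system separately as a scalar balance law with discontinuous flux and using the consistency of the numerical fluxes $\hat F^\rho$ and $\hat F^w$ together with the compactness furnished by the earlier stability lemmas. First I would fix $\phi\in C_c^\infty(\R\times\R^+)$, write $\phi_i^n=\phi(x_i,t^n)$, multiply the scheme \eqref{Numer} (in the component $z=\rho$ or $z=w$) by $h\,\phi_i^n$, and sum over $i\in\Z$, $n\ge 0$. Summation by parts in both $i$ and $n$ moves the differences onto $\phi$, producing a discrete analogue
\[
\sum_{n,i} h\Big( z_i^n\,\frac{\phi_i^{n}-\phi_i^{n-1}}{\Delta t} \Big)\Delta t
+\sum_{n,i} \Delta t\Big( F^{z,n}_{i+\frac12}\,\frac{\phi_{i+1}^n-\phi_i^n}{h}\Big) h
+\ (\text{initial-data term}) = 0 .
\]
The initial-data term $\sum_i h\,z_i^0\phi_i^0$ converges to $\int_\R z_0(x)\phi(x,0)\,dx$, but since $\phi$ has compact support in $\R\times\R^+$ (not including $t=0$), this term vanishes for $h$ small; this is why the statement is clean. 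So it remains to show the two sums converge to $\int z\,\phi_t$ and $\int zg(u)\,\phi_x$ respectively.

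The second step is the convergence of the flux term, which is the crux. By Lemma~\ref{theorem2} (the $\L1$ bounds on $\rho,w,wu,wg(u)$) together with the $\L\infty$ bound on $u$ coming from Lemma~\ref{lemma2} and positivity from Lemma~\ref{lemma1}, the families $\{z_h\}$ and $\{z_h g(u_h)\}$ are bounded in $\L\infty_{\loc}\cap\L1$, so along a subsequence $z_h\rightharpoonup z$ and $z_h g(u_h)\rightharpoonup \chi$ weakly-$\star$. The real work is identifying $\chi = z g(u)$, i.e.\ showing the numerical flux is consistent with the exact flux in the limit. I would show that $F^{z,n}_{i+\frac12}$ differs from $z(x_{i+\frac12},t^n)\,g(u(x_{i+\frac12},t^n))$ (or a suitable one-sided trace version) by a quantity that is small in an averaged sense: for $\hat F^w$ this uses the explicit formula \eqref{FwN}, the Lipschitz continuity of $w\mapsto wg(w/\rho)$ on the invariant region $S$, and the elementary bound $|\hat F^w(U,U)-F^w(U)|\le C|{\max(w,0)}-w| + C|\min(w,0)-w|$, which is controlled once one knows $w$ does not change sign too wildly — more precisely one exploits that on $S$ the relevant one-sided values entering the $\max$ are the upwind ones; for $\hat F^\rho$ one uses the explicit limit formula \eqref{over} for $F_0$ and the fact that $g(u_i^n)$ is, by the CFL condition, slowly varying so that the interface flux reduces to the upwind value $g(u)\rho$ up to an $O(h)$ correction plus the (signed) concentration contribution, which — and this is the key point — does not appear in the weak formulation \eqref{thm1} because the test function is evaluated against $\phi_t,\phi_x$ and the putative $\delta$-term on a measure-zero set integrates against a continuous function without obstruction once we only claim the identity in Theorem~\ref{thm1} (not a full entropy/measure formulation). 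Standard Lax–Wendroff-type manipulation then gives
\[
\sum_{n,i}\Delta t\, F^{z,n}_{i+\frac12}\,\frac{\phi_{i+1}^n-\phi_i^n}{h}\, h \ \longrightarrow\ \int_{\R\times\R^+} z\,g(u)\,\phi_x\,dx\,dt .
\]

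The third step is routine: the time term converges because $z_h\to z$ in $\L1_{\loc}$ along the subsequence (the $\BV$-type or weak compactness plus the scheme's $\L1$-contraction-like structure in Lemma~\ref{theorem2}) and $\frac{\phi_i^n-\phi_i^{n-1}}{\Delta t}\to\phi_t$ uniformly on the support of $\phi$; hence $\sum h\,z_i^n (\phi_i^n-\phi_i^{n-1}) \to \int z\,\phi_t$. Combining the three limits yields $\int z\phi_t + zg(u)\phi_x = 0$, as claimed. I expect the main obstacle to be the flux-consistency estimate for $\hat F^\rho$: because the underlying scalar problem \eqref{1} genuinely admits measure-valued solutions at sign changes of $g(u)$, one must argue carefully that the possible concentration is a $t$-linear multiple of a Dirac mass supported on a discrete-in-space null set, so that it contributes nothing to the smooth weak formulation of Theorem~\ref{thm1}; making this rigorous (rather than hand-waving "the $\delta$ lives on a null set") is where the careful bookkeeping — using \eqref{over}, the CFL bound, and the $\L1$ bound on $w$ from Lemma~\ref{theorem2} — has to be done.
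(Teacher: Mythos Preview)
Your approach has a genuine gap and also misreads what the theorem asserts. The statement is purely about the \emph{approximate} solutions: it asks that $\int z_h\phi_t + z_hg(u_h)\phi_x \to 0$, with no limit object $z$ or $u$ appearing. You instead try to extract weak/strong limits $z_h\rightharpoonup z$, $z_hg(u_h)\rightharpoonup\chi$ and then identify $\chi=zg(u)$; even if that succeeded it would prove a different (stronger) statement, and in any case the compactness you invoke is not available: Lemma~\ref{theorem2} gives only uniform $\L1$ bounds on $\rho,w,wu,wg(u)$, not $\BV$ control or $\L1_{\loc}$ strong convergence. Your ``$z_h\to z$ in $\L1_{\loc}$'' is therefore unjustified, and the product-limit identification $\chi=zg(u)$ is precisely the hard step that this theorem is designed to \emph{avoid}.

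The paper's argument is far more elementary and uses no compactness at all. After the same summation by parts you perform, one is left with
\[
A(h)=-\lambda h^2\sum_{i,n}\bigl[F^{z,n}_{i-\frac12}-z_{i-1}^n g(u_{i-1}^n)\bigr]\phi_{x,i}^n,
\]
and the single key observation you are missing is that, by the explicit formulas \eqref{over}, \eqref{FrhoN}, \eqref{FwN}, the upwind-type numerical flux $F^{z,n}_{i-\frac12}$ can only take one of the three values $z_{i-1}^n g(u_{i-1}^n)$, $z_i^n g(u_i^n)$, or $0$. Hence the bracket is always a combination of terms $\pm z_j^n g(u_j^n)$ with $j\in\{i-1,i\}$, and since the scheme is $3$-point each $z_j^n g(u_j^n)$ contributes to at most two values of $i$, multiplied by $\phi_{x,i}^n$ which is uniformly bounded. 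Combining this with the $\L1$ bound on $z^n g(u^n)$ from Lemma~\ref{theorem2} and summing over $n$ gives directly $|A(h)|\le C\,T\,h\,\|z_0 g(u_0)\|_{\L1}$, i.e.\ $A(h)=O(h)$. No limit extraction, no $\delta$-concentration bookkeeping, and no Lipschitz estimates on $w\mapsto wg(w/\rho)$ are needed; your worries about measure-valued solutions are a red herring at this stage because the estimate is entirely discrete.
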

 \begin{proof}
 Let $i\in\mathbb{Z},\:n\in\mathbb{N},\:\phi_i^n=\phi(x_i,\:t_n)$ and $$\phi_{x,\:i}^n=\displaystyle\frac{\phi(x_i,\:t^n)-\phi(x_{i-1},\:t^n)}{h},\:\forall\,\, i,\:n.$$To prove theorem, it is enough to show that for $z=\rho,\:w, $
\begin{equation*}
\lim\limits_{h \rightarrow 0} A(h)=0 \, \, \, \mbox{with}\, \, \, A(h)= -h\sum\limits_{i\in\mathbb{Z},\:n\in\mathbb{N}} \left[z_i^{n+1} - z_i^{n} + \lambda \Big(z_i^ng(u_i^n)-z_{i-1}^ng(u_{i-1}^n)\Big)\right]\phi_i^n,
\end{equation*}
By the definition of the scheme, 
\begin{eqnarray*}
 A(h)= -h \sum\limits_{i,\:n} \left[ -\lambda(F_{i+\frac{1}{2}}^{z,\:n} - F_{i-\frac{1}{2}}^{z,\:n}) + \lambda\Big (z_i^ng(u_i^n)-z_{i-1}^ng(u_{i-1}^n)\Big)\right] \phi_i^n,\end{eqnarray*}
which, on rearranging the terms, gives, 
\begin{eqnarray*} A(h)=\lambda h \sum\limits_{i,\:n} \left[ \Big(F_{i+\frac{1}{2}}^{z,\:n} - z_i^ng(u_i^n)\Big)-\Big( F_{i-\frac{1}{2}}^{z,\:n}-z_{i-1}^ng(u_{i-1}^n)\Big)\right] \phi_i^n.
\end{eqnarray*} Now, applying summation by parts, we get
\begin{equation}\label{df1}
 A(h)= -\lambda h^2 \sum\limits_{i,\:n} \left[ \displaystyle F_{i-\frac{1}{2}}^{z,\:n} - z_{i-1}^ng(u_{i-1}^n)\right]\phi_{x,\:i}^n,\:
\end{equation}
where $F_{i-\frac{1}{2}}^{\rho,\:n}$ and $F_{i-\frac{1}{2}}^{w,\:n}$ are given by \eqref{FrhoN} and \eqref{FwN} respectively. Note that the possible expressions for $F_{i-\frac{1}{2}}^{z,\:n}$ are $z_{i-1}^{n}g(u_{i-1}^n),\:
z_{i}^{n} g(u_{i}^n)$ and $0$. Let $j\in\mathbb{Z}$. For a fixed $n$,the terms containing $g(u_j^n)$ in the expression \eqref{df1}  are
\begin{eqnarray*}
-\lambda h^2z_j^ng(u_j^n)\left[ \displaystyle \phi_{x,\:{j}}^n-\phi_{x,\:{j}+1}^n \right] 
\le L_\phi\lambda h^2 \norma{z_0 g(u_{0})}_{\L1}, L_\phi=2\norma{\phi_{x}}_{\L\infty(\R\times\R^+)}
\end{eqnarray*}
using the fact that the scheme is a $3-$ point scheme and $zg(u)$ is  $L^1$ stable. Summing over $n\in \mathbb{N},$ we get
\begin{equation*}
 -\lambda h^2 \sum\limits_{i,\:n} \left[ F_{i-\frac{1}{2}}^{z,\:n} - z_{i-1}^ng(u_{i-1}^n) \right]\phi_{x,\:i}^n\le \sum\limits_{n}L_\phi\lambda h^2 \norma{z_0g(u_0)}_{\L1}=L_\phi T h \norma{z_0 g(u_0)}_{\L1}, 
\end{equation*}
where $T$ is the final time. This shows that $A(h)=\mathcal{O}(h)$, which proves the claim.
 \end{proof}
 The above theorem shows that the distribution limit $U$ of the approximate solution $U_h$ is a solution of \textbf{(GPGD)} in the sense of distributions. It has been pointed in the seminal paper of \cite{bouchut1994zero} that the solutions of the system \textbf{(PGD)} satisfy the following inequality for any convex real--valued function $S$
 \begin{equation*}
 \Big(\rho S(u)\Big)_t + \Big(\rho u S(u)\Big)_x\le 0, 
 \end{equation*}
which can be extended to \textbf{(GPGD)} and reads as: \begin{eqnarray*}
\left( \rho S(u) \right)_t+ \left(\rho g(u) S(u)\right)_x \leq 0.
 \end{eqnarray*}
%
In the following theorem, we show that the numerical solution $U_i^n$ satisfies the discrete form of the above inequality.
\begin{theorem}[Discrete Entropy Inequality]\label{main}
 For every $\phi \in C^{\infty}_{c}(\R \times \R^+), $ and for $z=\rho S(u)$, we have \begin{equation*}\begin{aligned} \lim\limits_{h \rightarrow 0}\displaystyle\int\limits_{\R \times\R^+ }\displaystyle z_h \phi_t + z_h g(u_h) \phi_x \ge 0.
\end{aligned}\end{equation*}
 \end{theorem}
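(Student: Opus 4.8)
The plan is to establish a \emph{discrete cell entropy inequality} and then pass to the limit exactly as in the proof of \Cref{thm1}. Concretely, I will show that for every convex $S$, all $i\in\mathbb{Z}$, $n\in\mathbb{N}$, and under the CFL condition \eqref{cfl},
\begin{equation*}
\rho_i^{n+1}S(u_i^{n+1}) - \rho_i^{n}S(u_i^{n}) + \lambda\left(\mathcal{G}^{n}_{i+\frac{1}{2}} - \mathcal{G}^{n}_{i-\frac{1}{2}}\right) \le 0,
\end{equation*}
where the numerical entropy flux is taken to be
\begin{equation*}
\mathcal{G}^{n}_{i+\frac{1}{2}} := F^{\rho,n}_{i+\frac{1}{2}}\,S(u_i^{n}) + \left(F^{w,n}_{i+\frac{1}{2}} - F^{\rho,n}_{i+\frac{1}{2}}\,u_i^{n}\right)\frac{S(u_{i+1}^{n}) - S(u_i^{n})}{u_{i+1}^{n} - u_i^{n}},
\end{equation*}
the difference quotient being read as $S'(u_i^n)$ when $u_{i+1}^n = u_i^n$. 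Taking $\phi\ge 0$, summing this inequality against $h\,\phi_i^n$ over $i,n$, and letting $h\to 0$ then gives the claim.

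To obtain the cell inequality, I would start from the incremental form \eqref{incre}. By \Cref{lemma1}, $\rho_i^{n+1}>0$, and by \Cref{lemma2}, $C^{n}_{i-\frac{1}{2}},D^{n}_{i+\frac{1}{2}}\ge 0$ with $C^{n}_{i-\frac{1}{2}}+D^{n}_{i+\frac{1}{2}}\le 1$, so \eqref{incre} presents $u_i^{n+1}$ as a convex combination of $u_{i-1}^n,u_i^n,u_{i+1}^n$; convexity of $S$ then gives
\begin{equation*}
S(u_i^{n+1}) \le \left(1 - C^{n}_{i-\frac{1}{2}} - D^{n}_{i+\frac{1}{2}}\right)S(u_i^n) + C^{n}_{i-\frac{1}{2}}S(u_{i-1}^n) + D^{n}_{i+\frac{1}{2}}S(u_{i+1}^n).
\end{equation*}
Multiplying by $\rho_i^{n+1}>0$, rewriting the right-hand side as $\rho_i^{n+1}S(u_i^n) - \rho_i^{n+1}C^n_{i-\frac12}\bigl(S(u_i^n)-S(u_{i-1}^n)\bigr) - \rho_i^{n+1}D^n_{i+\frac12}\bigl(S(u_i^n)-S(u_{i+1}^n)\bigr)$, substituting $\rho_i^{n+1}S(u_i^n)$ from the $\rho$-component of \eqref{Numer}, and using that \eqref{C} gives $\rho_i^{n+1}C^n_{i-\frac12}(u_i^n-u_{i-1}^n) = -\lambda(F^{w,n}_{i-\frac12}-F^{\rho,n}_{i-\frac12}u_i^n)$ and $\rho_i^{n+1}D^n_{i+\frac12}(u_{i+1}^n-u_i^n) = -\lambda(F^{w,n}_{i+\frac12}-F^{\rho,n}_{i+\frac12}u_i^n)$, the two $C,D$ products turn into difference--quotient terms; grouping everything by interface produces exactly the asserted inequality. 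The degenerate cases $u_i^n=u_{i\pm1}^n$ (where $C^n_{i-\frac12}$, $D^n_{i+\frac12}$ and the quotients are defined by continuity) are handled directly, or by working with the pre-division identity that precedes \eqref{incre}. The one algebraic point requiring a check is that the quantity landing at interface $i-\frac12$ is the same whether built from $u_i^n$ or $u_{i-1}^n$, i.e. that $\mathcal{G}^n_{i-\frac12}$ is a genuine two-point flux, which follows from
\begin{equation*}
F^{\rho,n}_{i-\frac12}\bigl(S(u_i^n)-S(u_{i-1}^n)\bigr) = F^{\rho,n}_{i-\frac12}\,(u_i^n-u_{i-1}^n)\,\frac{S(u_i^n)-S(u_{i-1}^n)}{u_i^n-u_{i-1}^n};
\end{equation*}
consistency, $\mathcal{G}^n_{i+\frac12}\big|_{U_i^n=U_{i+1}^n=(\rho,w)}=\rho\,g(u)\,S(u)$, is immediate from \eqref{FP}, \eqref{FrhoN}, \eqref{FwN}.

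For the passage to the limit I would mimic the proof of \Cref{thm1}: multiply the cell entropy inequality by $h\,\phi_i^n\ge 0$, sum over $i\in\mathbb{Z}$, $n\in\mathbb{N}$, apply summation by parts in $n$ on the $\rho S(u)$ terms and in $i$ on the $\mathcal{G}$ terms (all boundary contributions vanishing since $\phi$ is compactly supported in $\R\times\R^+$), and compare the resulting discrete sums with $\int_{\R\times\R^+}\bigl(z_h\phi_t + z_h g(u_h)\phi_x\bigr)$, $z=\rho S(u)$. The discretization error, arising from replacing $\int\phi_t,\int\phi_x$ by finite differences and $\mathcal{G}^n_{i\mp\frac12}$ by $\rho g(u)S(u)$, is $\mathcal{O}(h)$: at every interface where the fluxes $F^{\rho,n}_{i+\frac12}$ and $F^{w,n}_{i+\frac12}$ are \emph{aligned} (both picked from the same cell, or both zero), $\mathcal{G}^n_{i+\frac12}$ collapses to one of $\rho_i^n g(u_i^n)S(u_i^n)$, $\rho_{i+1}^n g(u_{i+1}^n)S(u_{i+1}^n)$, $0$, i.e. to one of the three endpoint values of the flux of $\rho g(u)S(u)$, so the second-difference telescoping bookkeeping is exactly that of $A(h)$ in \Cref{thm1}, controlled by the $\L1$-boundedness of $\rho g(u)S(u)$ (which follows from \Cref{theorem2} together with the uniform bound on $u_h$ given by \Cref{theorem1}, keeping $S(u_h)$ and the difference quotients bounded). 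The genuinely new feature is the overcompressive interfaces, where $F^{\rho,n}_{i+\frac12}$ and $F^{w,n}_{i+\frac12}$ need not be aligned and $\mathcal{G}^n_{i+\frac12}$ is no longer an endpoint value; there one must estimate the additional contribution by hand, using the $\L1$-bounds on $wg(u)$ supplied by \Cref{theorem2}.

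The main obstacle is precisely this last point. The Lax--Wendroff argument of \Cref{thm1} leans on each numerical flux being one of three explicit endpoint values; while $F^{\rho,n}$ and $F^{w,n}$ separately have this structure (by \eqref{over} and \eqref{FwN}), the entropy flux $\mathcal{G}$ inherits it only away from overcompressive interfaces, so the consistency-error estimate at those interfaces, where moreover $\rho_h$ and $w_h$ may concentrate, is the delicate step. Everything else, namely the cell entropy inequality, the convexity step, and the two-point property of $\mathcal{G}$, is routine once \Cref{lemma1} and \Cref{lemma2} are granted, and the remainder of the limit passage is identical to that of \Cref{thm1}.
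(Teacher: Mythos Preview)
Your proposal is correct and follows essentially the same route as the paper. Your numerical entropy flux $\mathcal{G}^n_{i+\frac12}$ is algebraically identical to the paper's $F^{\rho S(u),n}_{i+\frac12}$ in \eqref{dSG}; the cell entropy inequality is obtained exactly as you describe (Jensen on the incremental form \eqref{incre}, then substitution of $C,D$ from \eqref{C}); and the limit passage is the summation-by-parts argument of \Cref{thm1}, with the overcompressive case $g(u_i^n)\ge 0,\,g(u_{i+1}^n)\le 0$ handled via the Lipschitz bound on the difference quotient $\frac{S(u_{i+1}^n)-S(u_i^n)}{u_{i+1}^n-u_i^n}$ together with the $\L1$-stability of $wg(u)$ from \Cref{theorem2}, precisely the two ingredients you single out.
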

 For proving theorem, we first prove the following lemma.

\begin{lemma}
 For all convex functions $S$, 
 \begin{equation}\label{en}
 \rho_i^{n+1}S(u_i^{n+1})-\rho_i^n S(u_i^{n})+\lambda\Big(G(\rho^n_i,\:\rho^n_{i+1},\:u_i^{n},\:u_{i+1}^{n})-G(\rho^n_{i-1}, \rho^n_{i},\:u_{i-1}^{n},\:u_{i}^{n})\Big)\le0,
 \end{equation}
 where $G(\cdot, \cdot, \cdot)$ is the numerical entropy flux, consistent in the following sense $$G(\rho,\:\rho,\:u,\:u)= S(u)\rho g(u).$$
 \end{lemma}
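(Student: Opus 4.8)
The plan is to turn the velocity update \eqref{incre} into a convex combination and apply Jensen's inequality, in the spirit of the convexity estimate of \cite{bouchut1994zero}. By \cref{lemma1}, $\rho_i^{n+1}>0$, so \eqref{incre} is meaningful; by \cref{lemma2}, the three weights $1-C^n_{i-\frac{1}{2}}-D^n_{i+\frac{1}{2}}$, $C^n_{i-\frac{1}{2}}$, $D^n_{i+\frac{1}{2}}$ are non-negative and sum to $1$, so $u_i^{n+1}$ is a convex combination of $u_{i-1}^n,u_i^n,u_{i+1}^n$. Applying Jensen's inequality to the convex $S$ and multiplying by $\rho_i^{n+1}>0$ gives
\begin{equation*}
\rho_i^{n+1}S(u_i^{n+1})\le\big(1-C^n_{i-\frac{1}{2}}-D^n_{i+\frac{1}{2}}\big)\rho_i^{n+1}S(u_i^n)+P_i^n\,S(u_{i-1}^n)+Q_i^n\,S(u_{i+1}^n),
\end{equation*}
with $P_i^n:=\rho_i^{n+1}C^n_{i-\frac{1}{2}}$ and $Q_i^n:=\rho_i^{n+1}D^n_{i+\frac{1}{2}}$. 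This is the only step where convexity of $S$ enters; the rest is algebra.

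Next I would substitute \eqref{C}, so that $P_i^n=-\lambda(F^{w,n}_{i-\frac{1}{2}}-F^{\rho,n}_{i-\frac{1}{2}}u_i^n)/(u_i^n-u_{i-1}^n)$ and $Q_i^n=-\lambda(F^{w,n}_{i+\frac{1}{2}}-F^{\rho,n}_{i+\frac{1}{2}}u_i^n)/(u_{i+1}^n-u_i^n)$; here the factor $\rho_i^{n+1}$ cancels, so $P_i^n,Q_i^n$ depend only on the two states adjacent to the respective interface and are $\ge 0$ by \cref{lemma2}. Using $(1-C^n_{i-\frac{1}{2}}-D^n_{i+\frac{1}{2}})\rho_i^{n+1}=\rho_i^{n+1}-P_i^n-Q_i^n$ together with $\rho_i^{n+1}-\rho_i^n=-\lambda(F^{\rho,n}_{i+\frac{1}{2}}-F^{\rho,n}_{i-\frac{1}{2}})$, the estimate rearranges into
\begin{equation*}
\rho_i^{n+1}S(u_i^{n+1})-\rho_i^nS(u_i^n)\le-\lambda\big(F^{\rho,n}_{i+\frac{1}{2}}-F^{\rho,n}_{i-\frac{1}{2}}\big)S(u_i^n)+P_i^n\big(S(u_{i-1}^n)-S(u_i^n)\big)+Q_i^n\big(S(u_{i+1}^n)-S(u_i^n)\big).
\end{equation*}

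It remains to read off the numerical entropy flux. For an interface with left state $(\rho_L,u_L)$ and right state $(\rho_R,u_R)$, write $\hat F^\rho_0=F_0(g(u_L),g(u_R),\rho_L,\rho_R)$ and $\hat F^w_0$ for the fluxes \eqref{FrhoN}, \eqref{FwN}, and set
\begin{equation*}
G(\rho_L,\rho_R,u_L,u_R):=\hat F^\rho_0\,S(u_L)+\frac{\hat F^w_0-\hat F^\rho_0\,u_L}{u_L-u_R}\big(S(u_L)-S(u_R)\big),
\end{equation*}
the quotient being assigned its finite limit when $u_L=u_R$. An elementary manipulation gives the equivalent form $G=\hat F^\rho_0\,S(u_R)+(\hat F^w_0-\hat F^\rho_0\,u_R)(S(u_L)-S(u_R))/(u_L-u_R)$. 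Using the first form on the $Q_i^n$-term (interface $i+\frac12$, whose left state is indexed $i$) and the second form on the $P_i^n$-term (interface $i-\frac12$, whose right state is indexed $i$), all $S(u_i^n)$-contributions cancel against $-\lambda(F^{\rho,n}_{i+\frac{1}{2}}-F^{\rho,n}_{i-\frac{1}{2}})S(u_i^n)$, and the remainder equals $\lambda\big(G(\rho^n_{i-1},\rho^n_i,u^n_{i-1},u^n_i)-G(\rho^n_i,\rho^n_{i+1},u_i^n,u^n_{i+1})\big)$, which is \eqref{en}. Consistency, $G(\rho,\rho,u,u)=\rho g(u)S(u)$, holds because on the diagonal $\hat F^\rho_0$ reduces to the upwind flux $\rho g(u)$ while $\hat F^w_0-\hat F^\rho_0\,u_L$ vanishes, so the quotient term disappears.

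I expect the main obstacle to be exactly the verification of the two structural facts about $G$ used above: that $\hat F^w_0-\hat F^\rho_0\,u_L$ vanishes along $\{u_L=u_R\}$ (so the quotient stays bounded near the diagonal and $G$ is a genuine continuous two-point function), and that the two displayed expressions for $G$ coincide. Both reduce to a case analysis over the sign configurations of $g(u_L),g(u_R)$, tracking the branches of the $\rho$-flux \eqref{over} — in particular the overcompressive $\delta$-shock branch, where \eqref{over} returns $0$, and here the positivity $\rho>0$ from \cref{lemma1} is essential since it excludes the two measure-valued sub-cases of \eqref{over} — together with the outer $\max$ and the truncations $\max(\cdot,0)$, $\min(\cdot,0)$ appearing in \eqref{FwN}. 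Once these case checks are in place, \eqref{en} follows uniformly over all convex $S$ by the bookkeeping above.
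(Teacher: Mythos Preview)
Your argument is correct and follows essentially the same route as the paper: apply Jensen's inequality to the convex combination \eqref{incre} (relying on \cref{lemma1,lemma2}), multiply by $\rho_i^{n+1}$, substitute \eqref{C} and the scheme for $\rho$, and then reorganize the right-hand side into a flux difference, arriving at the very same numerical entropy flux (your $G$ coincides with the paper's $F^{\rho S(u),n}_{i+\frac12}$ after clearing denominators). Your handling of the diagonal $u_L=u_R$ and the explicit use of $\rho>0$ to discard the sign sub-cases in \eqref{over} is in fact a bit more careful than the paper, which checks consistency only in a representative case.
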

 \begin{proof}
 Step 1:~\underline{We derive an equivalent formulation of \eqref{en}.} \\
Using \textbf{Lemma~\ref{lemma2}}, 
the convexity of $S$ and Jensen's inequality on the incremental form \eqref{incre}, one gets, 
\[S(u_i^{n+
1}) \le S(u_i^n)(1-\displaystyle C^{n}_{i-\frac{1}{2}}-\displaystyle D^{n}_{i+\frac{1}{2}})+S(u_{i-1}^n) \displaystyle C^{n}_{i-\frac{1}{2}}+S(u_{i+1}^{n})\displaystyle D^{n}_{i+\frac{1}{2}},\:\]
which implies
\begin{eqnarray*}S(u_i^{n+
1})-S(u_i^n)\le -\Big(S(u_{i}^n)-S(u_{i-1}^n)\Big)\displaystyle C^{n}_{i-\frac{1}{2}}+\Big(S(u_{i+1}^n)-S(u_i^n)\Big)\displaystyle D^{n}_{i+\frac{1}{2}}.
\end{eqnarray*}
Substituting the values of $\displaystyle C^{n}_{i-\frac{1}{2}}$ and $\displaystyle D^{n}_{i+\frac{1}{2}},\:$ we have, 
\begin{eqnarray*}
\rho_i^{n+1}S(u_i^{n+
1})-\rho_i^{n+1}S(u_i^n)&\le -\lambda\Big(S(u_{i-1}^n)-S(u_{i}^n)\Big)\displaystyle\frac{F^{w,\:n}_{i-\frac{1}{2}}-F^{\rho,\:n}_{i-\frac{1}{2}}u_i^{n}}{(u_{i}^{n}-u_{i-1}^n)}\\ &\quad-\lambda\Big(S(u_{i+1}^n)-S(u_i^n)\Big)\displaystyle\frac{F^{w,\:n}_{i+\frac{1}{2}}-F^{\rho,\:n}_{i+\frac{1}{2}}u_i^{n}}{(u_{i+1}^{n}-u_{i}^n)}.
\end{eqnarray*}
Step 2:~\underline{The inequality \eqref{en} and the last inequality are equivalent.}\\
Putting the value of $\rho_i^{n+1}, $ in the last inequality, we have, \[-\lambda\Big(S(u_{i-1}^n)-S(u_i^n)\Big)\frac{F^{w,\:n}_{i-\frac{1}{2}}-F^{\rho,\:n}_{i-\frac{1}{2}}u_i^{n}}{(u_{i}^{n}-u_{i-1}^n)}-\lambda\Big(S(u_{i+1}^n)-S(u_i^n)\Big)\frac{F^{w,\:n}_{i+\frac{1}{2}}-F^{\rho,\:n}_{i+\frac{1}{2}}u_i^{n}}{(u_{i+1}^{n}-u_{i}^n)}\]
\begin{eqnarray*}\ge\rho_i^{n+1}S(u_i^{n+
1})-\Big(\rho_i^{n}-\lambda(F^{\rho,\:n}_{i+\frac{1}{2}}-F^{\rho,\:n}_{i-\frac{1}{2}})\Big)S(u_i^n), 
\end{eqnarray*}
which on further rearrangement, 
\begin{eqnarray*}
\rho_i^{n+1}S(u_i^{n+
1})-\rho_i^{n}S(u_i^n) &\le -\lambda\Big(F^{\rho,\:n}_{i+\frac{1}{2}}-F^{\rho,\:n}_{i-\frac{1}{2}}\Big)S(u_i^n)+\lambda\Big(S(u_{i}^n)-S(u_{i-1}^n)\Big)\displaystyle\frac{F^{w,\:n}_{i-\frac{1}{2}}-F^{\rho,\:n}_{i-\frac{1}{2}}u_i^{n}}{(u_{i}^{n}-u_{i-1}^n)}\\
&-\lambda\Big(S(u_{i+1}^n)-S(u_i^n)\Big)\displaystyle\frac{F^{w,\:n}_{i+\frac{1}{2}}-F^{\rho,\:n}_{i+\frac{1}{2}}u_i^{n}}{(u_{i+1}^{n}-u_{i}^n)}, \\
&= \lambda\Bigg(F^{\rho,\:n}_{i-\frac{1}{2}}S(u_i^n)+\Big(S(u_{i}^n)-S(u_{i-1}^n)\Big)\displaystyle\frac{F^{w,\:n}_{i-\frac{1}{2}}-F^{\rho,\:n}_{i-\frac{1}{2}}u_i^{n}}{(u_{i}^{n}-u_{i-1}^n)}\Bigg)\\&\quad\quad - \lambda\Bigg(F^{\rho,\:n}_{i+\frac{1}{2}}S(u_i^n)+\Big(S(u_{i+1}^n)-S(u_i^n)\Big)\displaystyle\frac{F^{w,\:n}_{i+\frac{1}{2}}-F^{\rho,\:n}_{i+\frac{1}{2}}u_i^{n}}{(u_{i+1}^{n}-u_{i}^n)}\Bigg).
\end{eqnarray*}
Now, the term \[F^{\rho,\:n}_{i+\frac{1}{2}}S(u_i^n)+\left(\Big(S(u_{i+1}^n)-S(u_i^n)\Big)\frac{F^{w,\:n}_{i+\frac{1}{2}}-F^{\rho,\:n}_{i+\frac{1}{2}}u_i^{n}}{(u_{i+1}^{n}-u_{i}^n)}\right)\]
\begin{eqnarray*}
&=&\frac{(u_{i+1}^{n}-u_{i}^n)F^{\rho,\:n}_{i+\frac{1}{2}}S(u_i^n)+(S(u_{i+1}^n)-S(u_i^n))(F^{w,\:n}_{i+\frac{1}{2}}-F^{\rho,\:n}_{i+\frac{1}{2}}u_i^{n})}{(u_{i+1}^{n}-u_{i}^n)}\\
&=&\frac{u_{i+1}^{n}F^{\rho,\:n}_{i+\frac{1}{2}}S(u_i^n)+S(u_{i+1}^n)(F^{w,\:n}_{i+\frac{1}{2}}-F^{\rho,\:n}_{i+\frac{1}{2}}u_i^{n})-S(u_{i}^n)F^{w,\:n}_{i+\frac{1}{2}}}{(u_{i+1}^{n}-u_{i}^n)}\\
&=&\frac{(
u_{i+1}^{n}F^{\rho,\:n}_{i+\frac{1}{2}}-F^{w,\:n}_{i+\frac{1}{2}})S(u_i^n)+S(u_{i+1}^n)(F^{w,\:n}_{i+\frac{1}{2}}-F^{\rho,\:n}_{i+\frac{1}{2}}u_i^{n})}{(u_{i+1}^{n}-u_{i}^n)}.
\end{eqnarray*}
Similarly, consider the term 
\[F^{\rho,\:n}_{i-\frac{1}{2}}S(u_i^n)+\Big(S(u_{i}^n)-S(u_{i-1}^n)\Big)\frac{F^{w,\:n}_{i-\frac{1}{2}}-F^{\rho,\:n}_{i-\frac{1}{2}}u_i^{n}}{(u_{i}^{n}-u_{i-1}^n)}\]
\begin{eqnarray*}
&=& \frac{(u_{i}^{n}-u_{i-1}^n)F^{\rho,\:n}_{i-\frac{1}{2}}S(u_i^n)+\Big(S(u_{i}^n)-S(u_{i-1}^n)\Big)(F^{w,\:n}_{i-\frac{1}{2}}-F^{\rho,\:n}_{i-\frac{1}{2}}u_i^{n})}{(u_{i}^{n}-u_{i-1}^n)}\\
&=& \frac{(u_{i}^{n}-u_{i-1}^n)F^{\rho,\:n}_{i-\frac{1}{2}}S(u_i^n)+S(u_{i}^n)(F^{w,\:n}_{i-\frac{1}{2}}-F^{\rho,\:n}_{i-\frac{1}{2}}u_i^{n})-S(u_{i-1}^n) (F^{w,\:n}_{i-\frac{1}{2}}-F^{\rho,\:n}_{i-\frac{1}{2}}u_i^{n})}{(u_{i}^{n}-u_{i-1}^n)}\\
&=& \frac{(F^{w,\:n}_{i-\frac{1}{2}}-u_{i-1}^n F^{\rho,\:n}_{i-\frac{1}{2}})S(u_i^n)-S(u_{i-1}^n) (F^{w,\:n}_{i-\frac{1}{2}}-F^{\rho,\:n}_{i-\frac{1}{2}}u_i^{n})}{(u_{i}^{n}-u_{i-1}^n)}, 
\end{eqnarray*}
which implies that
\begin{equation}\label{dS}
 \rho_i^{n+1} S(u_i^{n+1})-\rho_i^n S(u_i^{n})+\lambda\Big(F^{\rho S(u),\:n}_{i+\frac{1}{2}}-F^{\rho S(u),\:n}_{i-\frac{1}{2}}\Big)\le0,\:
\end{equation}
where 
\begin{equation}\label{dSG}
 F^{\rho S(u),\:n}_{i+\frac{1}{2}}=\frac{(
u_{i+1}^{n}F^{\rho,\:n}_{i+\frac{1}{2}}-F^{w,\:n}_{i+\frac{1}{2}})S(u_i^n)+S(u_{i+1}^n)(F^{w,\:n}_{i+\frac{1}{2}}-F^{\rho,\:n}_{i+\frac{1}{2}}u_i^{n})}{(u_{i+1}^{n}-u_{i}^n)}.
\end{equation}
To prove the consistency of the flux $F^{\rho S(u),\:n}_{i+\frac{1}{2}}$, we assume that both $u_i^n$ and $u_{i+1}^n$ have the same sign, say, $u_i^n,\:u_{i+1}^n>0$ and are not equal, then, we have, 
\begin{eqnarray*}
 F^{\rho S(u),\:n}_{i+\frac{1}{2}}&=&\frac{(
u_{i+1}^n\rho_{i}^{n}g(u_i^n)-w_i^ng(u_i^n)u_i^n)S(u_i^n)+S(u_{i+1}^n)(g(u_i^n)u_i^n-g(u_i^n)u_i^{n})}{(u_{i+1}^{n}-u_{i}^n)}\\
&=&\frac{(
u_{i+1}^{n}-u_i^n)\rho_i^ng(u_i^n)S(u_i^n)}{(u_{i+1}^{n}-u_{i}^n)}, 
\end{eqnarray*}
which gives the consistency of $F^{\rho S(u)}.$ 
This indicates to choose $G= F^{\rho S(u)}$ in the equation \eqref{en}.\end{proof}
 Finally, we give the proof of the main theorem, \textbf{Theorem~\ref{main}} which says that the numerical solution $U_i^n$ satisfies the discrete form of entropy inequality.
 \begin{proof}
 Let $i\in\mathbb{Z},\:n\in\mathbb{N},\:\phi_i^n=\phi(x_i,\:t_n)$ and $\phi_{x,\:i}^n=\displaystyle\frac{\phi(x_i,\:t^n)-\phi(x_{i-1}, t^n)}{h},\:\forall\,\,\, i,\:n.$ To prove theorem, it is enough to show that, for $z=\rho S(u), $
\begin{equation*}
\lim \limits_{h\rightarrow0} A(h)\ge 0, \, \, \, \mbox{where}, \, \, A(h)= -h\sum\limits_{i,n} \left[ z_i^{n+1} - z_i^{n} + \lambda \Big(z_i^n g(u_i^n)-z_{i-1}^n g(u_{i-1}^n)\Big)\right] \phi_{i}^n.\end{equation*}
By the equations \eqref{dS} and \eqref{dSG}, we have
\begin{equation*}
 \rho_i^{n+1} S(u_i^{n+1})-\rho_i^n S(u_i^{n})+\lambda\Big(F^{\rho S(u),\:n}_{i+\frac{1}{2}}-F^{\rho S(u),\:n}_{i-\frac{1}{2}}\Big)\le0,\:
\end{equation*}
 where $F^{\rho S(u),\:n}_{i+\frac{1}{2}}$ is given by \eqref{dSG} and can be rewritten as 
\begin{equation*}\displaystyle F^{\rho S(u),\:n}_{i+\frac{1}{2}}=
{ \left\{\begin{array}{ccl}
 S(u^n_i)\rho_i^n g(u_{i}^n) & \, \mbox{if}\, & g(u_{i}^n) \geq 0,\:g(u_{i+1}^n)>0,\:\\[.2mm]
S(u^n_{i+1})\rho_{i+1}^n g(u_{i+1}^n) & \, \mbox{if}\, & g(u_{i}^n)<0,\:g(u_{i+1}^n) \leq0,\:\\[.2mm]
0 & \, \mbox{if}\, & g(u_{i}^n)<0,\:g(u_{i+1}^n)>0,\:\\[.2mm]
 \displaystyle F^{w,\:n}_{i+\frac{1}{2}}\frac{S(u^n_{i+1})-S(u^n_{i})}{ {{u}}_{i+1}^n - {{u}}_i^n } & \, \mbox{if}\, & g(u_{i}^n) \geq 0,\:g(u_{i+1}^n) \leq 0.\\[.2mm]
\end{array}\right.}.
\end{equation*}
This implies that with $z=\rho S(u), $
\begin{equation*}
 -(z_i^{n+1}-z_i^n)\ge \lambda\Big(F^{z,\:n}_{i+\frac{1}{2}}-F^{z,\:n}_{i-\frac{1}{2}}\Big).
\end{equation*}
This further implies 
\begin{eqnarray*}
 A(h)\ge h \sum\limits_{i,\:n} \left[ \lambda\Big(F^{z,\:n}_{i+\frac{1}{2}}-F^{z,\:n}_{i-\frac{1}{2}}\Big) - \lambda \Big(z_i^ng(u_i^n)-z_{i-1}^ng(u_{i-1}^n)\Big)\right] \phi_i^n,,\end{eqnarray*}
which, on rearranging the terms, gives, 
\begin{eqnarray*} A(h)\ge\lambda h \sum\limits_{i,\:n} \left[ \Big(F^{z,\:n}_{i+\frac{1}{2}} - z_i^ng(u_i^n)\Big)-\Big( F^{z,\:n}_{i-\frac{1}{2}}-z_{i-1}^ng(u_{i-1}^n)\Big)\right] \phi_i^n.
\end{eqnarray*} Now, applying summation by parts, we get
\begin{equation*}
 A(h)\ge -\lambda h^2 \sum\limits_{i,\:n} \left[ \displaystyle F^{z,\:n}_{i-\frac{1}{2}} - z_{i-1}^ng(u_{i-1}^n)\right]\phi_{x,\:i}^n.
\end{equation*}
Now, the proof follows by similar argument as in \textbf{Theorem~\ref{thm1}} and by using, in addition, the Lipschitz continuity of the function $S$.
\end{proof}

We now prove that the solutions of the higher order scheme also preserve the physical properties of the system. Denote \[\hat{F}^z\left(U_{i\pm\frac{1}{2}}^{L},\:U_{i\pm\frac{1}{2}}^{R}\right)=\hat{F}^z_{i\pm\frac{1}{2}},\:z=\rho,\:w.\]
\begin{lemma}[Positivity] Under the CFL-like condition, 
\begin{equation}\label{cfl2}
 \lambda\max_{i,\:n}|g(\overline{u}_{i}^n)|\leq \mbox{cfl}, 
 \end{equation}
 if $\rho_{i-\frac{1}{2}}^{R},\:\rho_{i+\frac{1}{2}}^{L}>0$ and $\mbox{cfl}=\frac{1}{2}$, then $\overline{\rho}_i^{n+1}>0$.
 \end{lemma}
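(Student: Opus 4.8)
The plan is to run the classical half-cell (Perthame--Shu type) decomposition and reduce the claim to the first-order positivity already proved in \textbf{Lemma~\ref{lemma1}}. Writing the $\rho$-component of the high-order update \eqref{seco} as
\[
\overline{\rho}_i^{n+1}=\overline{\rho}_i^n-\lambda\Big(\hat{F}^\rho\big(U_{i+\frac{1}{2}}^{L},U_{i+\frac{1}{2}}^{R}\big)-\hat{F}^\rho\big(U_{i-\frac{1}{2}}^{L},U_{i-\frac{1}{2}}^{R}\big)\Big),
\]
and using that, the reconstruction being (piecewise) linear, $\overline{\rho}_i^n=\frac{1}{2}\big(\rho_{i-\frac{1}{2}}^{R}+\rho_{i+\frac{1}{2}}^{L}\big)$, I would introduce the fictitious ``mid-cell'' flux $\hat{F}^\rho\big(U_{i-\frac{1}{2}}^{R},U_{i+\frac{1}{2}}^{L}\big)$ and regroup to obtain $\overline{\rho}_i^{n+1}=\frac{1}{2}(A_i+B_i)$ with
\[
A_i=\rho_{i+\frac{1}{2}}^{L}-2\lambda\Big(\hat{F}^\rho\big(U_{i+\frac{1}{2}}^{L},U_{i+\frac{1}{2}}^{R}\big)-\hat{F}^\rho\big(U_{i-\frac{1}{2}}^{R},U_{i+\frac{1}{2}}^{L}\big)\Big),
\]
\[
B_i=\rho_{i-\frac{1}{2}}^{R}-2\lambda\Big(\hat{F}^\rho\big(U_{i-\frac{1}{2}}^{R},U_{i+\frac{1}{2}}^{L}\big)-\hat{F}^\rho\big(U_{i-\frac{1}{2}}^{L},U_{i-\frac{1}{2}}^{R}\big)\Big).
\]
The cancellation of the mid-cell flux makes this identity exact.

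Next I would observe that, since $\hat{F}^\rho(U,V)=F_{0}\big(g(u^U),g(u^V),\rho^U,\rho^V\big)$ by \eqref{FrhoN}, each of $A_i$ and $B_i$ is literally one step of the first-order \textbf{(DDF)} update \eqref{Numer}--\eqref{FrhoN} for $\rho$, carried out with time-step ratio $2\lambda$ in place of $\lambda$, once the reconstructed edge states are relabelled as cell values: for $A_i$ the three ``cells'' are $U_{i-\frac{1}{2}}^{R},\,U_{i+\frac{1}{2}}^{L},\,U_{i+\frac{1}{2}}^{R}$ (with velocities $g(u_{i-\frac{1}{2}}^{R}),g(u_{i+\frac{1}{2}}^{L}),g(u_{i+\frac{1}{2}}^{R})$), and for $B_i$ they are $U_{i-\frac{1}{2}}^{L},\,U_{i-\frac{1}{2}}^{R},\,U_{i+\frac{1}{2}}^{L}$. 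The limiter of \S\ref{hi} is chosen so that the reconstructed velocities do not overshoot the neighbouring cell-average velocities; since $g$ is monotone, $\max|g(u_{i\pm\frac{1}{2}}^{L,R})|\le\max_{i,n}|g(\overline{u}_i^n)|$, so \eqref{cfl2} with $\mbox{cfl}=\frac{1}{2}$ gives $2\lambda\,\max|g(u_{i\pm\frac{1}{2}}^{L,R})|\le 1$, which is exactly the CFL hypothesis \eqref{cfl} needed for the first-order map at ratio $2\lambda$.

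The conclusion then follows from \textbf{Lemma~\ref{lemma1}}: applied to the configuration underlying $A_i$, whose three densities $\rho_{i-\frac{1}{2}}^{R},\rho_{i+\frac{1}{2}}^{L},\rho_{i+\frac{1}{2}}^{R}$ are all strictly positive by hypothesis, it gives $A_i>0$, and likewise $B_i>0$; hence $\overline{\rho}_i^{n+1}=\frac{1}{2}(A_i+B_i)>0$.

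I expect the only genuine work to lie in the middle step: the bookkeeping that verifies (i) the splitting $\overline{\rho}_i^{n+1}=\frac{1}{2}(A_i+B_i)$ is exact, and (ii) $A_i,B_i$ are \emph{bona fide} instances of the three-point first-order scheme, i.e.\ that there is a consistent assignment of velocities $g(u)$ to the reconstructed points so that the two flux evaluations in $A_i$ (resp.\ $B_i$) are precisely the left and right numerical fluxes of a single first-order cell; once this is in place, \textbf{Lemma~\ref{lemma1}} applies verbatim. The one extra analytic fact required is the elementary observation that a monotone (TVD-type) limiter does not enlarge the range of $g(u)$, which is what transfers the cell-average CFL condition \eqref{cfl2} to the reconstructed-value CFL condition used for $A_i$ and $B_i$.
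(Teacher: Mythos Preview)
Your argument is correct and follows a genuinely different route from the paper. The paper does not introduce the fictitious mid-cell flux or split into two first-order substeps; instead it writes $\overline{\rho}_i^{n+1}=H(\rho^L_{i-\frac{1}{2}},\rho^R_{i-\frac{1}{2}},\rho^L_{i+\frac{1}{2}},\rho^R_{i+\frac{1}{2}};u^{L,R}_{i\pm\frac{1}{2}})$, repeats the monotonicity computation of \textbf{Lemma~\ref{lemma1}} directly to show that $H$ is non-decreasing in each of its four density arguments under \eqref{cfl2} with $\mbox{cfl}=\tfrac{1}{2}$, and concludes from $H(0,0,0,0,\cdot)=0$. Your Perthame--Shu decomposition is more modular: it treats \textbf{Lemma~\ref{lemma1}} as a black box and recovers the factor $\tfrac{1}{2}$ transparently as the price of doubling $\lambda$ in each half-cell. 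The paper's route is shorter but re-does the derivative bookkeeping; yours adds the mid-cell algebra but avoids reproving monotonicity. One point you make explicit that the paper glosses over is the need to transfer the CFL bound from cell-average velocities $\overline{u}_i^n$ to reconstructed edge velocities $u^{L,R}_{i\pm\frac{1}{2}}$ via the limiter's range-preservation and the monotonicity of $g$; this is required in both proofs (the paper's derivative of $H$ with respect to $\rho^{R}_{i-\frac{1}{2}}$ or $\rho^{L}_{i+\frac{1}{2}}$ produces $g(u^{R}_{i-\frac{1}{2}})$ or $g(u^{L}_{i+\frac{1}{2}})$, not $g(\overline{u}_i^n)$), so your remark is a useful clarification rather than an extra hypothesis.
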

\begin{proof}
\begin{eqnarray*}
 \overline{\rho}_i^{n+1}&= &H(\rho^L_{i-\frac{1}{2}},\:\rho^R_{i-\frac{1}{2}},\:\rho^L_{i+\frac{1}{2}},\:\rho^R_{i+\frac{1}{2}},\:u^L_{i-\frac{1}{2}},\:u^R_{i-\frac{1}{2}},\:u^L_{i+\frac{1}{2}},\:u^R_{i+\frac{1}{2}})\\&=&\overline{\rho}_i^n-\lambda\left[\hat{F}^\rho_{i+\frac{1}{2}}-\hat{F}^\rho_{i-\frac{1}{2}}\right]=\frac{1}{2}\left(\rho_{i-\frac{1}{2}}^{R}+ \rho_{i+\frac{1}{2}}^{L}\right)-\lambda\left[\hat{F}^\rho_{i+\frac{1}{2}}-\hat{F}^\rho_{i-\frac{1}{2}} \right]
 \end{eqnarray*}
Repeating the arguments in \textbf{Lemma~\ref{lemma1}}, $H$ is non decreasing in its first four arguments under the CFL-like condition \eqref{cfl2}. Since $H(0, 0, 0, 0, \cdot, \cdot, \cdot, \cdot)=0,\:$ the result follows.
\end{proof}
The same result is true for the first-order scheme with $\mbox{cfl}=1, $ see \textbf{Lemma.~\ref{lemma1}}. $\rho_{i\pm\frac{1}{2}}^{L,\:R}$ can now be made positive by appropriate choice of $\sigma_i^\rho$.
\begin{lemma}[Bounds on Momentum]\label{cd} Under the CFL-like condition \eqref{cfl2} with $\mbox{cfl}=\frac{1}{3}$, 
 if 
 \[g^{-1}(m)\overline{\rho}_i^n\le \overline{w}_i^n\le g^{-1}(M)\overline{\rho}_i^n,\:\, m=\min_i g(\overline{u}_{i}^0), \, M=\max_i g(\overline{u}_{i}^0), \]
 then \[g^{-1}(m)\overline{\rho}_i^{n+1}\le \overline{w}_i^{n+1}\le g^{-1}(M)\overline{\rho}_i^{n+1}. \]\end{lemma}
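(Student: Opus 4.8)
The plan is to carry over to the higher-order cell averages the convex-combination mechanism behind \textbf{Lemma~\ref{lemma2}}. Write $\mu:=g^{-1}(m)$ and $\nu:=g^{-1}(M)$, so the hypothesis is $\mu\le\overline u_i^n\le\nu$ for all $i$ (equivalently $\mu\,\overline\rho_i^n\le\overline w_i^n\le\nu\,\overline\rho_i^n$, with $\overline u_i^n=\overline w_i^n/\overline\rho_i^n$) and the claim is the same inequality at level $n+1$; positivity of $\overline\rho_i^{n+1}$ and of $\rho^{L,R}_{i\pm\frac12}$ is already at hand from the positivity lemma just proved (since $\mathrm{cfl}=\frac{1}{3}\le\frac{1}{2}$), so every quotient below is legitimate. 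The first observation is that the slope limiter is bound-preserving: the reconstructed interface velocities lie between neighbouring cell averages, so $\mu\le\overline u_j^n\le\nu$ for all $j$ forces $\mu\le u^{L}_{i\pm\frac12},u^{R}_{i\pm\frac12}\le\nu$ at every interface, and since $p_i^w=p_i^\rho p_i^u$ one has the pointwise relation $w^{L,R}_{i\pm\frac12}=\rho^{L,R}_{i\pm\frac12}\,u^{L,R}_{i\pm\frac12}$ that was used to pass from \eqref{Numer} to \eqref{incre}.

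Next, starting from \eqref{seco} and using $\overline w_i^{n+1}=\overline\rho_i^{n+1}\overline u_i^{n+1}$ together with $\overline w_i^n=\overline\rho_i^n\overline u_i^n$, I would obtain
\[
\overline\rho_i^{n+1}\big(\overline u_i^{n+1}-\overline u_i^n\big)=-\lambda\Big[\big(\hat F^w_{i+\frac12}-\overline u_i^n\hat F^\rho_{i+\frac12}\big)-\big(\hat F^w_{i-\frac12}-\overline u_i^n\hat F^\rho_{i-\frac12}\big)\Big],
\]
and then rewrite each bracket — exactly as in the derivation of \eqref{incre}--\eqref{C}, now using the relation $w^{L,R}=\rho^{L,R}u^{L,R}$ and the explicit form \eqref{FwN} of $\hat F^w$ — as a nonnegative multiple of a difference of velocities taken among $\overline u_i^n$ (that is, $u^{R}_{i-\frac12}$ and $u^{L}_{i+\frac12}$) and the outer reconstructed velocities $u^{L}_{i-\frac12},u^{R}_{i+\frac12}$. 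This yields an incremental identity $\overline u_i^{n+1}=\sum_k\theta_k\,v_k$ in which the $v_k$ are these (at most four) reconstructed velocities, all of them in $[\mu,\nu]$, and the weights $\theta_k$ are ratios of the type in \eqref{C} with $\overline\rho_i^{n+1}$ in the denominator. Granting $\theta_k\ge0$ and $\sum_k\theta_k=1$, we get $\mu\le\overline u_i^{n+1}\le\nu$, which is the assertion.

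The crux, and the main obstacle, is the estimate on these incremental coefficients, exactly as flagged in \S\ref{intro}. As in \textbf{Lemma~\ref{lemma1}} the analysis splits into cases according to the signs of $g(u^{L}_{i\pm\frac12})$ and $g(u^{R}_{i\pm\frac12})$, using the monotonicity of $\hat F^\rho$ in its density slots, the structure of $\hat F^w$ in \eqref{FwN}, and the just-proved positivity to control the denominator $\overline\rho_i^{n+1}$; the numerators change form from configuration to configuration, so both the nonnegativity of the $\theta_k$ and the bound $\sum_k\theta_k\le1$ must be checked by hand in each case. Carrying the constants through this case analysis — where, unlike in the first-order setting, reconstructed states from three adjacent cells interact — is what sharpens the admissible mesh ratio from the value $\mathrm{cfl}=\frac{1}{2}$ that already suffices for positivity to $\mathrm{cfl}=\frac{1}{3}$. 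Finally, specialising to vanishing slopes, so that $u^{L}_{i\pm\frac12}=u^{R}_{i\pm\frac12}=u_i^n$ and $\overline u_i^n=u_i^n$, the same computation reproduces the incremental bounds $C^n_{i-\frac12},D^n_{i+\frac12}\ge0$ and $C^n_{i-\frac12}+D^n_{i+\frac12}\le1$ of \textbf{Lemma~\ref{lemma2}}, now under the weaker restriction \eqref{cfl}.
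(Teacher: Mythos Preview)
Your overall architecture matches the paper's: write $\overline u_i^{n+1}$ in incremental form and verify case by case that the coefficients are nonnegative and sum to at most $1$ under the stated CFL. But the decomposition you sketch is not the one the paper actually carries out, and the difference matters for the constant. The paper does \emph{not} express $\overline u_i^{n+1}$ as a convex combination of the four interface velocities $u^{L,R}_{i\pm\frac12}$; it expresses it as a combination of the three \emph{cell averages} $\overline u_{i-1}^n,\overline u_i^n,\overline u_{i+1}^n$,
\[
\overline u_i^{n+1}=\overline u_i^n\,(1-\tilde C_{i-\frac12}-\tilde D_{i+\frac12})+\overline u_{i-1}^n\,\tilde C_{i-\frac12}+\overline u_{i+1}^n\,\tilde D_{i+\frac12},
\]
where $\tilde C,\tilde D$ contain the Minmod ratios $\overline\sigma_k^u/(\overline u_k^n-\overline u_{k-1}^n)\in[0,\tfrac12]$ explicitly. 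These limiter-specific bounds---not just the generic bound-preserving property you invoke---are what give both $\tilde C,\tilde D\ge0$ and $\tilde C+\tilde D\le1$ under $\mathrm{cfl}=\tfrac13$. If instead you substitute $\overline u_i^n=\tfrac12(u^R_{i-\frac12}+u^L_{i+\frac12})$ and collect coefficients of the four reconstructed velocities directly, then in the purely upwind configuration the coefficient of $u^L_{i+\frac12}$ works out to $\tfrac14\rho^R_{i-\frac12}+\tfrac14\rho^L_{i+\frac12}\bigl(1-4\lambda g(u^L_{i+\frac12})\bigr)$, which only stays nonnegative under $\mathrm{cfl}\le\tfrac14$; so the route you describe does not deliver the claimed $\tfrac13$.

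There is a second ingredient you have not flagged. The overcompressive interfaces (say $u^L_{i+\frac12}\ge0\ge u^R_{i+\frac12}$, where $\hat F^\rho_{i+\frac12}=0$ while $\hat F^w_{i+\frac12}>0$, so the two fluxes decouple) require a genuinely different argument. There the paper exploits that Minmod forces $\overline\sigma_i^u=0$ at a local extremum, so $\overline u_i^n=u^L_{i+\frac12}=u^R_{i-\frac12}$, and separately shows that $u^R_{i+\frac12}\le0$ forces $\overline u_{i+1}^n\le0$; only with these sign facts can one bound the incremental coefficients in that regime. Your sketch folds this into ``splits into cases according to the signs'', but the mechanism is qualitatively different from the non-overcompressive cases and is where much of the work in the paper actually lies.
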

 \begin{proof}
The strategy to prove the lemma  is to write the finite volume scheme \eqref{seco} in an incremental form for $\overline{u}_i^{n+1}$. For simplicity, we show the results for Minmod limiter.
\begin{enumerate}[label=Case \textbf{\arabic*}.]
 \item We start with the case when neither $u^{L}_{i+\frac{1}{2}}\ge0$ and $u^{R}_{i+\frac{1}{2}}\le0$ nor $u^{L}_{i-\frac{1}{2}}\ge0$ and $u^{R}_{i-\frac{1}{2}}\le0$. Then, we have
 \[\hat{F}^w_{i+\frac{1}{2}}= u^{L}_{i+\frac{1}{2}}\max(\hat{F}^\rho_{i+\frac{1}{2}},\:0)+u^{R}_{i+\frac{1}{2}}\min(\hat{F}^\rho_{i+\frac{1}{2}},\:0).\] Using the finite volume scheme \eqref{seco}, we get
\begin{eqnarray*}
 \overline{\rho}_i^{n+1}&=&\overline{\rho}_i^n-\lambda\left[\hat{F}^\rho_{i+\frac{1}{2}}-\hat{F}^\rho_{i-\frac{1}{2}}\right],\:\\
 \overline{w}_i^{n+1}=\overline{\rho}_i^{n+1}\overline{u}_i^{n+1}&=&\overline{u}_i^n\overline{\rho}_i^{n}-\lambda\left[\hat{F}^w_{i+\frac{1}{2}}-\hat{F}^w_{i-\frac{1}{2}}\right].
\end{eqnarray*}
Multiplying first equation by $\overline{u}_i^n$, we get, 
\begin{eqnarray}\label{incre1}
 \overline{\rho}_i^{n+1}(\overline{u}_i^{n+1}- \overline{u}_i^{n})&=&\lambda\left[\hat{F}^w_{i-\frac{1}{2}}-\overline{u}_i^{n}\hat{F}^\rho_{i-\frac{1}{2}}\right]-\lambda\left[\hat{F}^w_{i+\frac{1}{2}}-\overline{u}_i^{n}\hat{F}^\rho_{i+\frac{1}{2}}\right].
\end{eqnarray}
Putting the values of $u^{L}_{i\pm\frac{1}{2}}$ and $u^{R}_{i\pm\frac{1}{2}}$ and $\hat{F}^w_{i\pm\frac{1}{2}}$, $ \overline{\rho}_i^{n+1}(\overline{u}_i^{n+1}- \overline{u}_i^{n})$ is equal to
 \begin{eqnarray*}
 &&\lambda\left[u^{L}_{i-\frac{1}{2}}\max(\hat{F}^\rho_{i-\frac{1}{2}},\:0)+u^{R}_{i-\frac{1}{2}}\min(\hat{F}^\rho_{i-\frac{1}{2}},\:0)-\overline{u}_i^{n}\Big(\max(\hat{F}^\rho_{i-\frac{1}{2}},\:0)+
\min(\hat{F}^\rho_{i-\frac{1}{2}},\:0\Big)\right]\\
&&-\lambda\left[u^{L}_{i+\frac{1}{2}}\max(\hat{F}^\rho_{i+\frac{1}{2}},\:0)+u^{R}_{i+\frac{1}{2}}\min(\hat{F}^\rho_{i+\frac{1}{2}},\:0)-\overline{u}_i^{n}\Big(\max(\hat{F}^\rho_{i+\frac{1}{2}},\:0)+
\min(\hat{F}^\rho_{i+\frac{1}{2}},\:0\Big)\right]\\
&=&\lambda\left[(u^{L}_{i-\frac{1}{2}}-\overline{u}_i^{n})\max(\hat{F}^\rho_{i-\frac{1}{2}},\:0)+(u^{R}_{i-\frac{1}{2}}-\overline{u}_i^{n})\min(\hat{F}^\rho_{i-\frac{1}{2}},\:0)\right]\\
&&-\lambda\left[(u^{L}_{i+\frac{1}{2}}-\overline{u}_i^{n})\max(\hat{F}^\rho_{i+\frac{1}{2}},\:0)+(u^{R}_{i+\frac{1}{2}}-\overline{u}_i^{n})\min(\hat{F}^\rho_{i+\frac{1}{2}},\:0)\right].
\end{eqnarray*}
 Note that 
\[u^{L}_{i+\frac{1}{2}}=p_{i}^u(x_{i+\frac{1}{2}})=\overline{u}_i^{n}+{\overline{\sigma}}_i^u,\:u^{R}_{i-\frac{1}{2}}=p_{i}^u(x_{i-\frac{1}{2}})=\overline{u}_{i}^{n}-{\overline{\sigma}}_i^u, \]
where 
${\overline{\sigma}}_i^u:=\displaystyle\frac{h}{2}\sigma_i^u=\frac{1}{2}\text{minmod}(\overline{u}_i^{n}-\overline{u}_{i-1}^{n},\:\overline{u}_{i+1}^{n}-\overline{u}_{i}^{n}).$
Hence, we have 
\begin{eqnarray*}
 \overline{\rho}_i^{n+1}(\overline{u}_i^{n+1}- \overline{u}_i^{n})&=&-\lambda\left[(-\overline{u}_{i-1}^{n}+\overline{u}_i^{n}-{\overline{\sigma}}_{i-1}^u)\max(\hat{F}^\rho_{i-\frac{1}{2}},\:0)+{\overline{\sigma}}_i^u\min(\hat{F}^\rho_{i-\frac{1}{2}},\:0)\right]\\&&-\lambda\left[{\overline{\sigma}}_i^u\max(\hat{F}^\rho_{i+\frac{1}{2}},\:0)+(\overline{u}_{i+1}^{n}-\overline{u}_i^{n}-{\overline{\sigma}}_{i+1}^u)\min(\hat{F}^\rho_{i+\frac{1}{2}},\:0)\right]\end{eqnarray*}
 which implies that
 \begin{eqnarray*}
 \overline{u}_i^{n+1}&=&
\overline{u}_i^{n}-\lambda\frac{(\overline{u}_{i}^{n}-\overline{u}_{i-1}^{n})}{\overline{\rho}_i^{n+1}}\left[\Big(1-\frac{{\overline{\sigma}}_{i-1}^u}{\overline{u}_{i}^{n}-\overline{u}_{i-1}^{n}}\Big)\max(\hat{F}^\rho_{i-\frac{1}{2}},\:0)\right]-\lambda\frac{{\overline{\sigma}}_i^u\max(\hat{F}^\rho_{i+\frac{1}{2}},\:0)}{\overline{\rho}_i^{n+1}}\\
 &&-\lambda\frac{(\overline{u}_{i+1}^{n}-\overline{u}_{i}^{n})}{\overline{\rho}_i^{n+1}}\left[\Big(1-\frac{{\overline{\sigma}}_{i+1}^u}{\overline{u}_{i+1}^{n}-\overline{u}_{i}^{n}}\Big)\min(\hat{F}^\rho_{i+\frac{1}{2}},\:0)\right]-\lambda\frac{{\overline{\sigma}}_i^u\min(\hat{F}^\rho_{i-\frac{1}{2}},\:0)}{\overline{\rho}_i^{n+1}}\\
&=&\overline{u}_i^{n}-(\overline{u}_{i}^{n}-\overline{u}_{i-1}^{n})\frac{\lambda}{\overline{\rho}_i^{n+1}}\left[\Big(1-\frac{{\overline{\sigma}}_{i-1}^u}{\overline{u}_{i}^{n}-\overline{u}_{i-1}^{n}}\Big)\max(\hat{F}^\rho_{i-\frac{1}{2}},\:0)+\frac{{\overline{\sigma}}_i^u\max(\hat{F}^\rho_{i+\frac{1}{2}},\:0)}{(\overline{u}_{i}^{n}-\overline{u}_{i-1}^{n})}\right]\\
 &&-(\overline{u}_{i+1}^{n}-\overline{u}_i^{n})\frac{\lambda}{\overline{\rho}_i^{n+1}}\left[\Big(1-\frac{{\overline{\sigma}}_{i+1}^u}{\overline{u}_{i+1}^{n}-\overline{u}_i^{n}}\Big)\min(\hat{F}^\rho_{i+\frac{1}{2}},\:0)+
 \frac{{\overline{\sigma}}_i^u\min(\hat{F}^\rho_{i-\frac{1}{2}},\:0)}{(\overline{u}_{i+1}^{n}-\overline{u}_{i}^{n})}\right]\\
 &=&\overline{u}_i^{n}-(\overline{u}_{i}^{n}-\overline{u}_{i-1}^{n})\tilde{C}_{i-\frac{1}{2}}+(\overline{u}_{i+1}^{n}-\overline{u}_i^{n})\tilde{D}_{i+\frac{1}{2}},\:
\end{eqnarray*}
and hence
\begin{eqnarray}\label{incre3}
\overline{u}_i^{n+1} =\overline{u}_i^{n}(1-\tilde{C}_{i-\frac{1}{2}}-\tilde{D}_{i+\frac{1}{2}})+\overline{u}_{i-1}^{n})\tilde{C}_{i-\frac{1}{2}}+\overline{u}_{i+1}^{n}\tilde{D}_{i+\frac{1}{2}}
\end{eqnarray}
where \[\tilde{D}_{i+\frac{1}{2}}=-\frac{\lambda}{\overline{\rho}_i^{n+1}}\left[\Big(1-\frac{{\overline{\sigma}}_{i+1}^u}{\overline{u}_{i+1}^{n}-\overline{u}_i^{n}}\Big)\min(\hat{F}^\rho_{i+\frac{1}{2}},\:0)+\frac{{\overline{\sigma}}_i^u\min(\hat{F}^\rho_{i-\frac{1}{2}},\:0)}{(\overline{u}_{i+1}^{n}-\overline{u}_{i}^{n})}\right], \]
\[\tilde{C}_{i-\frac{1}{2}}=\frac{\lambda}{\overline{\rho}_i^{n+1}}\left[\Big(1-\frac{{\overline{\sigma}}_{i-1}^u}{\overline{u}_{i}^{n}-\overline{u}_{i-1}^{n}}\Big)\max(\hat{F}^\rho_{i-\frac{1}{2}},\:0)+\frac{{\overline{\sigma}}_i^u\max(\hat{F}^\rho_{i+\frac{1}{2}},\:0)}{(\overline{u}_{i}^{n}-\overline{u}_{i-1}^{n})}\right]\]
Since for any $k$, $0\le\displaystyle\frac{{\overline{\sigma}}_k^u}{(\overline{u}_{k}^{n}-\overline{u}_{k-1}^{n})},\:\ \displaystyle\frac{{\overline{\sigma}}_k^u}{(\overline{u}_{k+1}^{n}-\overline{u}_{k}^{n})}\le \frac{1}{2}$, hence, $\tilde{C}_{i-\frac{1}{2}},\:\tilde{D}_{i+\frac{1}{2}}\ge0$. We now prove that $\overline{\rho}_i^{n+1}(1-\tilde{C}_{i-\frac{1}{2}}-\tilde{D}_{i+\frac{1}{2}})\ge0$. 
We introduce some notations:
\[u^{L ,\:+}_{i\pm\frac{1}{2}}:=\max(u^{L}_{i\pm\frac{1}{2}} ,\:0),\:u^{R,\:-}_{i\pm\frac{1}{2}}:=\min(u^{R}_{i\pm\frac{1}{2}} ,\:0).\]
\begin{enumerate}[\textbf{(\roman*)}]
    \item When $\hat{F}^\rho_{i-\frac{1}{2}},\hat{F}^\rho_{i+\frac{1}{2}}\ge0 :$\\
    Then, $\hat{F}^\rho_{i\pm\frac{1}{2}}=u^{L ,\:+}_{i\pm\frac{1}{2}}\rho^L_{i\pm\frac{1}{2}}.$
    We have $\tilde{D}_{i+\frac{1}{2}}=0$ and
    \[\tilde{C}_{i-\frac{1}{2}}=\frac{\lambda}{\overline{\rho}_i^{n+1}}\left[\Big(1-\frac{{\overline{\sigma}}_{i-1}^u}{\overline{u}_{i}^{n}-\overline{u}_{i-1}^{n}}\Big)\max(\hat{F}^\rho_{i-\frac{1}{2}},\:0)+\frac{{\overline{\sigma}}_i^u\max(\hat{F}^\rho_{i+\frac{1}{2}},\:0)}{(\overline{u}_{i}^{n}-\overline{u}_{i-1}^{n})}\right]\le \frac{\lambda}{\overline{\rho}_i^{n+1}}\left[\hat{F}^\rho_{i-\frac{1}{2}}+\frac{1}{2}\hat{F}^\rho_{i+\frac{1}{2}}\right],\]
    which implies that
   \begin{eqnarray*}
   \overline{\rho}_i^{n+1}(1- \tilde{C}_{i-\frac{1}{2}})&\ge& \overline{\rho}_i^{n+1}-\lambda \hat{F}^\rho_{i-\frac{1}{2}}-\frac{1}{2}\lambda\hat{F}^\rho_{i+\frac{1}{2}}\\
   &=&\overline{\rho}_i^n-\frac{3}{2}\lambda\hat{F}^\rho_{i+\frac{1}{2}}\\
   &=&\frac{1}{2}\rho^L_{i+\frac{1}{2}}\Big(1-3\lambda g(u^L_{i+\frac{1}{2}})\Big)+\frac{1}{2}\rho^R_{i-\frac{1}{2}}\ge0,
   \end{eqnarray*}
   under the CFL-like condition \eqref{cfl2} with cfl=$\frac{1}{3}$.
\item  When $\hat{F}^\rho_{i-\frac{1}{2}},\hat{F}^\rho_{i+\frac{1}{2}}\le0 :$\\
Then, $\hat{F}^\rho_{i\pm\frac{1}{2}}=u^{R ,\:-}_{i\pm\frac{1}{2}}\rho^R_{i\pm\frac{1}{2}}.$
    We have $\tilde{C}_{i-\frac{1}{2}}=0$ and
    \[\tilde{D}_{i+\frac{1}{2}}=-\frac{\lambda}{\overline{\rho}_i^{n+1}}\left[\Big(1-\frac{{\overline{\sigma}}_{i+1}^u}{\overline{u}_{i+1}^{n}-\overline{u}_i^{n}}\Big)\min(\hat{F}^\rho_{i+\frac{1}{2}},\:0)+\frac{{\overline{\sigma}}_i^u\min(\hat{F}^\rho_{i-\frac{1}{2}},\:0)}{(\overline{u}_{i+1}^{n}-\overline{u}_{i}^{n})}\right]\]
    The proof is similar to previous case.
    \item When $\hat{F}^\rho_{i-\frac{1}{2}}\ge0,\hat{F}^\rho_{i+\frac{1}{2}}\le0 :$\\
Then, $\hat{F}^\rho_{i-\frac{1}{2}}=u^{L ,\:+}_{i-\frac{1}{2}}\rho^L_{i-\frac{1}{2}},\hat{F}^\rho_{i+\frac{1}{2}}=u^{R ,\:-}_{i+\frac{1}{2}}\rho^R_{i+\frac{1}{2}}.$
    \[\tilde{D}_{i+\frac{1}{2}}=-\frac{\lambda}{\overline{\rho}_i^{n+1}}\Big(1-\frac{{\overline{\sigma}}_{i+1}^u}{\overline{u}_{i+1}^{n}-\overline{u}_i^{n}}\Big)\hat{F}^\rho_{i+\frac{1}{2}}\le -\frac{\lambda \hat{F}^\rho_{i+\frac{1}{2}}}{\overline{\rho}_i^{n+1}},\]
\[\tilde{C}_{i-\frac{1}{2}}=\frac{\lambda}{\overline{\rho}_i^{n+1}}\Big(1-\frac{{\overline{\sigma}}_{i-1}^u}{\overline{u}_{i}^{n}-\overline{u}_{i-1}^{n}}\Big)\hat{F}^\rho_{i-\frac{1}{2}}\le \frac{\lambda \hat{F}^\rho_{i-\frac{1}{2}}}{\overline{\rho}_i^{n+1}},\] which implies that
   \[\overline{\rho}_i^{n+1}(1- \tilde{C}_{i-\frac{1}{2}}-\tilde{D}_{i+\frac{1}{2}})\ge\overline{\rho}_i^{n+1}-\lambda \hat{F}^\rho_{i-\frac{1}{2}}+\lambda\hat{F}^\rho_{i+\frac{1}{2}}=\overline{\rho}_i^{n}\ge0\]
    \item When $\hat{F}^\rho_{i-\frac{1}{2}}\le0,\hat{F}^\rho_{i+\frac{1}{2}}\ge0 :$\\
Then, $\hat{F}^\rho_{i-\frac{1}{2}}=u^{R ,\:-}_{i-\frac{1}{2}}\rho^R_{i-\frac{1}{2}},\hat{F}^\rho_{i+\frac{1}{2}}=u^{L ,\:+}_{i+\frac{1}{2}}\rho^L_{i+\frac{1}{2}}.$
    We have 
    \[\tilde{D}_{i+\frac{1}{2}}=-\frac{\lambda}{\overline{\rho}_i^{n+1}}\frac{{\overline{\sigma}}_i^u\min(\hat{F}^\rho_{i-\frac{1}{2}},\:0)}{(\overline{u}_{i+1}^{n}-\overline{u}_{i}^{n})}\le -\frac{\lambda\hat{F}^\rho_{i-\frac{1}{2}}}{2\overline{\rho}_i^{n+1}},\]
    and \[ \tilde{C}_{i-\frac{1}{2}}=\frac{\lambda}{\overline{\rho}_i^{n+1}}\frac{{\overline{\sigma}}_i^u\max(\hat{F}^\rho_{i+\frac{1}{2}},\:0)}{(\overline{u}_{i}^{n}-\overline{u}_{i-1}^{n})}\le \frac{\lambda \hat{F}^\rho_{i+\frac{1}{2}}}{2\overline{\rho}_i^{n+1}}. \]
  Now, \begin{eqnarray*}
  \overline{\rho}_i^{n+1}(1- \tilde{C}_{i-\frac{1}{2}}-\tilde{D}_{i+\frac{1}{2}})&\ge&\overline{\rho}_i^{n+1}-\lambda \hat{F}^\rho_{i+\frac{1}{2}}+\lambda\hat{F}^\rho_{i-\frac{1}{2}}\\
  &=&\overline{\rho}_i^{n}-\frac{3}{2}\lambda \hat{F}^\rho_{i+\frac{1}{2}}+\frac{3}{2}\lambda\hat{F}^\rho_{i-\frac{1}{2}}\\
  &=&\frac{1}{2}\rho^L_{i+\frac{1}{2}}\Big(1-3\lambda u^L_{i+\frac{1}{2}}\Big)+\frac{1}{2}\rho^R_{i-\frac{1}{2}}\Big(1+3\lambda u^R_{i-\frac{1}{2}}\Big)\ge0
  \end{eqnarray*}
   under the CFL-like condition \eqref{cfl2} with cfl=$\frac{1}{3}$.
\end{enumerate}
\item For the remaining cases on $u^{L,\:R}_{i\pm\frac{1}{2}}, $ using \eqref{incre1}, we have
\begin{eqnarray}\label{incre4}
\overline{u}_i^{n+1} =\overline{u}_i^{n}(1-\tilde{C}_{i-\frac{1}{2}}-\tilde{D}_{i+\frac{1}{2}})+\overline{u}_{i-1}^{n})\tilde{C}_{i-\frac{1}{2}}+\overline{u}_{i+1}^{n}\tilde{D}_{i+\frac{1}{2}}
\end{eqnarray}
where 
\begin{eqnarray*}
 \displaystyle \tilde{C}_{i-\frac{1}{2}}=-\displaystyle\lambda\frac{\hat{F}^w_{i-\frac{1}{2}}-\hat{F}^\rho_{i-\frac{1}{2}}\overline{u}_i^{n}}{\overline{\rho}_i^{n+1}(\overline{u}_{i}^{n}-\overline{u}_{i-1}^n)},\:\quad \tilde{D}_{i+\frac{1}{2}}=-\displaystyle\lambda\frac{\hat{F}^w_{i+\frac{1}{2}}-\hat{F}^\rho_{i+\frac{1}{2}}\overline{u}_i^{n}}{\overline{\rho}_i^{n+1}(\overline{u}_{i+1}^{n}-\overline{u}_{i}^n)}.
\end{eqnarray*}
We prove for the case when $\overline{u}_i^n\ge\max(\overline{u}_{i-1}^n,\:
\overline{u}_{i+1}^n)$. Since $\overline{u}_i^n\ge\max(\overline{u}_{i-1}^n,\:\overline{u}_{i+1}^n)$, 
 we have ${\overline{\sigma}}_i^u=0$ and hence $\overline{u}_i^n=u^L_{i+\frac{1}{2}}=u^R_{i-\frac{1}{2}}>0.$
 Let us further assume that $u^{L}_{i-\frac{1}{2}}\ge0,\:u^{L}_{i+\frac{1}{2}}=u^{R}_{i-\frac{1}{2}}=\overline{u}_{i}^n>0$ and $u^{R}_{i+\frac{1}{2}}\le0$. Then, using the assumption \eqref{g1}, we have $g(u^{L}_{i-\frac{1}{2}})\ge0,\:g(u^{L}_{i+\frac{1}{2}})=g(u^{R}_{i-\frac{1}{2}})=g(\overline{u}_{i}^n)>0$ and $g(u^{R}_{i+\frac{1}{2}})\le0$.
 
Also, 
$$u^R_{i+\frac{1}{2}}\le0\implies \overline{u}_{i+1}^n\le0,\:$$ which we prove for completeness. Let us assume to the contrary that $\overline{u}_{i+1}^n>0.$ 
 Then, since $u^R_{i+\frac{1}{2}}=\overline{u}_{i+1}^n-{\overline{\sigma}}_{i+1}^u\le 0,\:$ ${\overline{\sigma}}_{i+1}^u$ should necessarily be positive. Now, if $\overline{u}_{i+2}^n\ge\overline{u}_{i+1}^n,\:$ then ${\overline{\sigma}}_{i+1}^u=0$ and if $\overline{u}_{i+2}^n<\overline{u}_{i+1}^n,\:$ then ${\overline{\sigma}}_{i+1}^u<0$. Both are contradictions to the fact that ${\overline{\sigma}}_{i+1}^u>0$. Hence, $\overline{u}_{i+1}^n\le0.$
 Hence
\[\hat{F}^\rho_{i+\frac{1}{2}}=0,\:\hat{F}^w_{i+\frac{1}{2}}=\max\Big(u^{L,\:+}_{i+\frac{1}{2}}g\big(u^{L,\:+}_{i+\frac{1}{2}}\big)\rho^L_{i+\frac{1}{2}},\:u^{R,\:-}_{i+\frac{1}{2}}g\big(u^{R, -}_{i+\frac{1}{2}}\big)\rho^R_{i+\frac{1}{2}}\Big)\]
and
\[\hat{F}^\rho_{i-\frac{1}{2}}= g\big(u^L_{i-\frac{1}{2}}\big)u^L_{i-\frac{1}{2}}\rho^L_{i-\frac{1}{2}},\:\hat{F}^w_{i-\frac{1}{2}}=u^L_{i-\frac{1}{2}}\hat{F}^\rho_{i-\frac{1}{2}}.\]
Now, keeping these inferences on $\overline{u}_{i}^n,\:\overline{u}_{i+1}^n$ in mind, we prove that $\tilde{C}_{i-\frac{1}{2}},\:\tilde{D}_{i+\frac{1}{2}}$ are non--negative and $\overline{\rho}_i^{n+1}(1-\tilde{C}_{i-\frac{1}{2}}-\tilde{D}_{i+\frac{1}{2}})\ge0$. 
Let us first consider $\tilde{C}_{i-\frac{1}{2}}$.
\begin{eqnarray*}\tilde{C}_{i-\frac{1}{2}}&=&-\lambda\hat{F}^\rho_{i-\frac{1}{2}}
 \frac{u_{i-\frac{1}{2}}^{L}-\overline{u}_i^{n}}
 {\overline{\rho}_i^{n+1}(\overline{u}_{i}^{n}-\overline{u}_{i-1}^n)}=-\lambda\hat{F}^\rho_{i-\frac{1}{2}}
 \frac{\left(\overline{u}_{i-1}^n+{\overline{\sigma}}_{i-1}^u-\overline{u}_i^{n}\right)}
 {\overline{\rho}_i^{n+1}(\overline{u}_{i}^{n}-\overline{u}_{i-1}^n)}=\lambda \frac{\hat{F}^\rho_{i-\frac{1}{2}}}
 {\overline{\rho}_i^{n+1}}\left(
 1-
 \frac{{\overline{\sigma}}_{i-1}^u}
 {\overline{u}_{i}^{n}-\overline{u}_{i-1}^n}\right),
 \end{eqnarray*}
 which is positive under the condition $K=1-
 \displaystyle\frac{{\overline{\sigma}}_{i-1}^u}
 {\overline{u}_{i}^{n}-\overline{u}_{i-1}^n}\ge0,\:$ which is always true. Also, 
 \[\tilde{D}_{i+\frac{1}{2}}=-\displaystyle\lambda\frac{\hat{F}^w_{i+\frac{1}{2}}-\hat{F}^\rho_{i+\frac{1}{2}}\overline{u}_i^{n}}{(\overline{u}_{i+1}^{n}-\overline{u}_{i}^n)}=-\displaystyle\lambda\frac{\hat{F}^w_{i+\frac{1}{2}}\overline{u}_i^{n}}{(\overline{u}_{i+1}^{n}-\overline{u}_{i}^n)}\ge0\] since $\overline{u}_{i+1}^{n}\le0,\:\overline{u}_{i}^n>0$ and $\hat{F}^w_{i+\frac{1}{2}}\ge0.$
Finally, we prove that $\tilde{C}_{i-\frac{1}{2}}+\tilde{D}_{i+\frac{1}{2}}\le 1.$
 \begin{eqnarray*}
\overline{\rho}_{i}^{n+1}&=& \overline{\rho}_{i}^{n}-\lambda(F^\rho_{i+\frac{1}{2}}-F^\rho_{i-\frac{1}{2}})= \overline{\rho}_{i}^{n}-\lambda(-F^\rho_{i-\frac{1}{2}})= \overline{\rho}_{i}^{n}+\displaystyle \frac{\tilde{C}_{i-\frac{1}{2}}{\overline{\rho}}_i^{n+1}}{K}
\end{eqnarray*}
Adding $-\tilde{D}_{i+\frac{1}{2}}\overline{\rho}_{i}^{n+1}$ on both sides, we have
\begin{eqnarray*}
\overline{\rho}_{i}^{n+1}\left(1-\frac{\tilde{C}_{i-\frac{1}{2}}}{K}-\tilde{D}_{i+\frac{1}{2}}\right)
&=&\overline{\rho}_{i}^{n}+\displaystyle\lambda\frac{\hat{F}^w_{i+\frac{1}{2}}-\hat{F}^\rho_{i+\frac{1}{2}}\overline{u}_i^{n}}{(\overline{u}_{i+1}^{n}-\overline{u}_{i}^n)}=\overline{\rho}_{i}^{n}+\displaystyle\lambda\frac{\hat{F}^w_{i+\frac{1}{2}}\overline{u}_i^{n}}{(\overline{u}_{i+1}^{n}-\overline{u}_{i}^n)}
\end{eqnarray*}
Now, let us assume that \[\hat{F}^w_{i+\frac{1}{2}}=u^{L, +}_{i+\frac{1}{2}}g\big(u^{L, +}_{i+\frac{1}{2}}\big)\rho^L_{i+\frac{1}{2}}=\overline{u}_{i}^{n}g\big(\overline{u}_{i}^{n}\big)\rho^L_{i+\frac{1}{2}}\]
Then, 
\begin{eqnarray*}
\overline{\rho}_{i}^{n+1}\left(1-\frac{\tilde{C}_{i-\frac{1}{2}}}{K}-\tilde{D}_{i+\frac{1}{2}}\right)&=& \overline{\rho}_{i}^{n}+\displaystyle\lambda\frac{\hat{F}^w_{i+\frac{1}{2}}}{(\overline{u}_{i+1}^{n}-\overline{u}_{i}^n)}=
 \overline{\rho}_{i}^{n}+\displaystyle\lambda\frac{\overline{u}_{i}^{n}g\big(\overline{u}_{i}^{n}\big)\rho^L_{i+\frac{1}{2}}}{(\overline{u}_{i+1}^{n}-\overline{u}_{i}^n)}\\
 &=&\frac{1}{2}(\rho^L_{i+\frac{1}{2}}+\rho^R_{i+\frac{1}{2}})+\displaystyle2\lambda\frac{\overline{u}_{i}^{n}g\big(\overline{u}_{i}^{n}\big)\rho^L_{i+\frac{1}{2}}}{2(\overline{u}_{i+1}^{n}-\overline{u}_{i}^n)}\\
 &=&\frac{1}{2}\rho^L_{i+\frac{1}{2}}\Big(1-2\displaystyle\lambda g(\overline{u}_i^n)\frac{\overline{u}_i^n}{\overline{u}_{i}^{n}-\overline{u}_{i+1}^n}\Big)+\frac{1}{2}\rho^R_{i+\frac{1}{2}}\ge0
\end{eqnarray*}
under the CFL-like condition \eqref{cfl2} and using the fact that
$
\displaystyle\frac{\overline{u}_i^n}{-\overline{u}_{i+1}^{n}+\overline{u}_{i}^n}\le1
\iff\overline{u}_{i+1}^{n}\le0
$.
 Hence, we get
 \[
\overline{\rho}_{i}^{n+1}\left(1-\frac{\tilde{C}_{i-\frac{1}{2}}}{K}-\tilde{D}_{i+\frac{1}{2}}\right)>0,\:\]
and since $0<K<1, $
we have \[
\overline{\rho}_{i}^{n+1}\left(1-\tilde{C}_{i-\frac{1}{2}}-\tilde{D}_{i+\frac{1}{2}}\right)> \overline{\rho}_{i}^{n+1}\left(1-\frac{\tilde{C}_{i-\frac{1}{2}}}{K}-\tilde{D}_{i+\frac{1}{2}}\right)>0.\]
Other cases follow similarly. 
\end{enumerate} 
We have now obtained that $\overline{\rho}_{i}^{n+1}\left(1-\tilde{C}_{i-\frac{1}{2}}-\tilde{D}_{i+\frac{1}{2}}\right)\ge0$, $\tilde{C}_{i-\frac{1}{2}}\ge0$ and $\tilde{D}_{i+\frac{1}{2}}\ge0$. Since $\overline{\rho}_i^{n+1}>0$ by previous lemma under the CFL-condition \eqref{cfl2},  we have $0\le\tilde{C}_{i-\frac{1}{2}}+\tilde{D}_{i+\frac{1}{2}}\le1.$ Then, using the incremental form \eqref{incre3} and \eqref{incre4}, we get that $\overline{u}_i^{n+1}$ is bounded by bounds of $\overline{u}_i^{0}.$ This completes the proof.
\end{proof}
When ${\overline{\sigma}}_{k}^u=0$ for all $k$, the scheme \eqref{seco} reduces to the first-order scheme \eqref{Numer} and hence the proof of \textbf{Lemma~\ref{cd}} gives the proof of \textbf{Lemma~\ref{lemma2}} under the CFL-like condition \eqref{cfl2} with cfl $=1$.
Since \eqref{RK}-\eqref{RK2} is a convex combination of Euler forward, hence it also preserves the properties of the system.
We now present some numerical experiments to show the performance of the scheme.
\section{Numerical Experiments}\label{NS}
The following experiments display the performance of first and higher order \textbf{(DDF)} scheme for capturing the solutions of Euler system with varying pressure and sources, both in one and two dimensions. The scheme will be tested on the non-trivial test cases from the existing literature and it will be seen that it can capture shocks, both classical and  $\delta\,-$ shocks as well as rarefactions efficiently.
\subsection{One Dimensional Experiments}
\begin{enumerate}[label=Experiment \arabic*:]
\item\underline{Moving shocks of \textbf{(PGD)}}:
The first experiment is to compare the performance of \textbf{(DDF)} scheme with the schemes proposed in \cite{berthon2006relaxation, leveque2004dynamics, yang2013discontinuous} for the data producing moving $\delta\,-$ shock. We take the initial data as Riemann Data with $(\rho_l,\:\rho_r)=(1,\:0.25), (u_l,\:u_r)=(1, 0)$ with the domain $[-0.5,\:0.5],\:T=0.4998,\:h=0.025,\:\mbox{cfl}=0.5$.
Limiters such as Minmod and Superbee have been used for reconstruction of $\rho,\:u$ for constructing the higher order version.
It can be observed from \textbf{Figure~\ref{cfl5s}} that both the limiters capture the right location of  $\delta\,-$ shock, though Superbee limiter performs slightly better than the others. As expected, the first order scheme produces  $\delta\,-$ shock with less height and more smearing around the location of  $\delta\,-$ shock as compared to the higher order. 
\begin{figure}[H]
 \centering
 \includegraphics[width=\textwidth,keepaspectratio ]{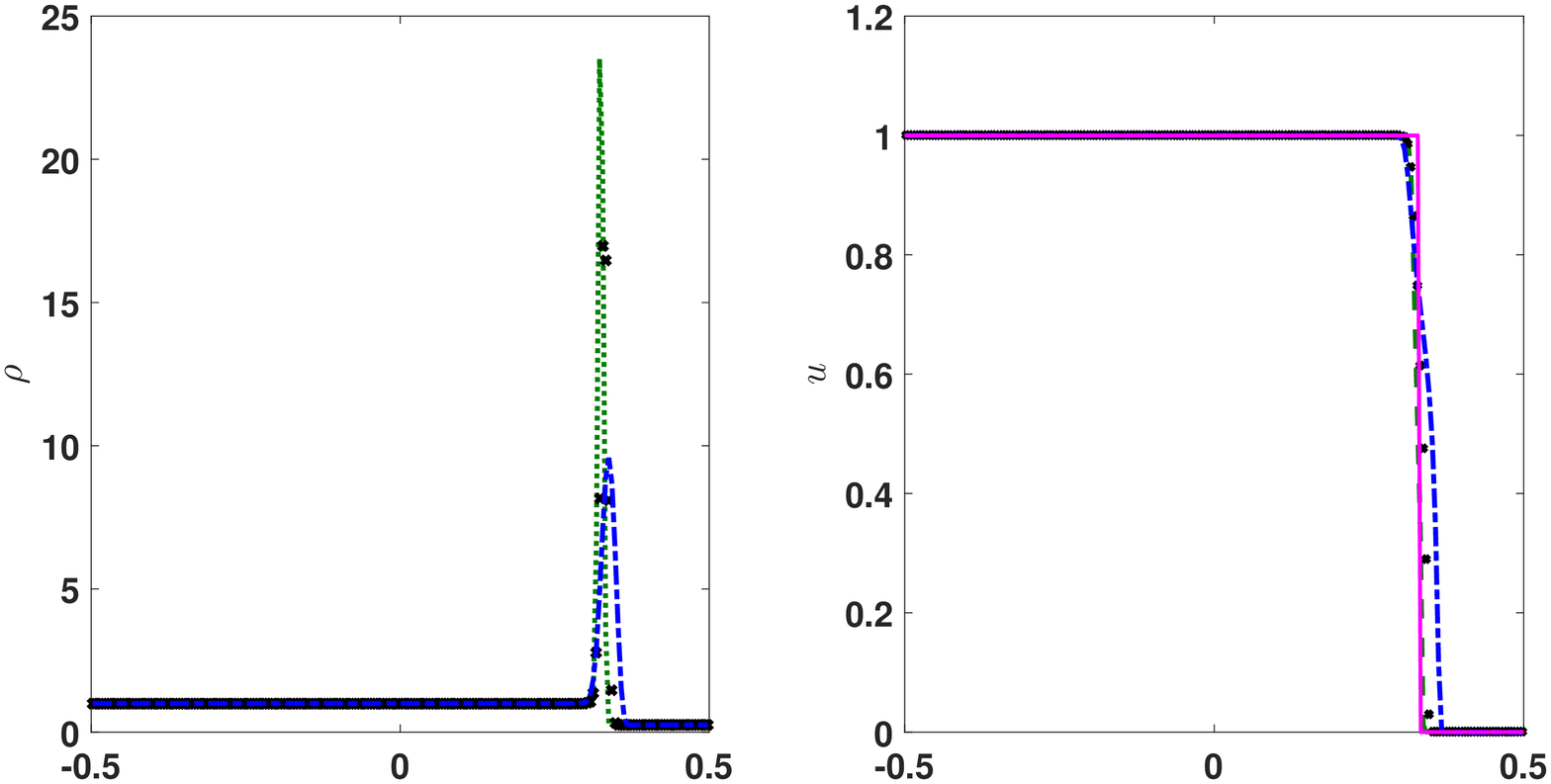}
 \caption{\textbf{(PGD)};Comparison of First and second-order \textbf{(DDF)} scheme: first-order Numerical Solution $U$({\color{blue}\protect\tikz[baseline]{\protect\draw[line width=0.5mm, dash dot] (0, .8ex)--++(1, 0) ;}}), Higher Order Numerical Solution $U$ with Minmod Limiter({\color{black}******}); Higher Order Numerical Solution $U$ with Superbee Limiter({\color{green}\protect\tikz[baseline]{\protect\draw[line width=0.5mm, loosely dashed] (0, .8ex)--++(1, 0)}}); Exact Solution $u$({\color{magenta}\protect\tikz[baseline]{\protect\draw[line width=.5mm] (0, .8ex)--++(1, 0) ;}})}
 \label{cfl5s}
\end{figure} 
\textbf{Figure~\ref{cfl6}} shows the graph of the primitive of the approximate solution $\rho_h(T)$ and indicates that the scheme captures the expected weight of the  $\delta\,-$ shock, i.e., $u_\delta T=0.3325.$
\begin{figure}[H]
 \centering
 \includegraphics[width=\textwidth,keepaspectratio]{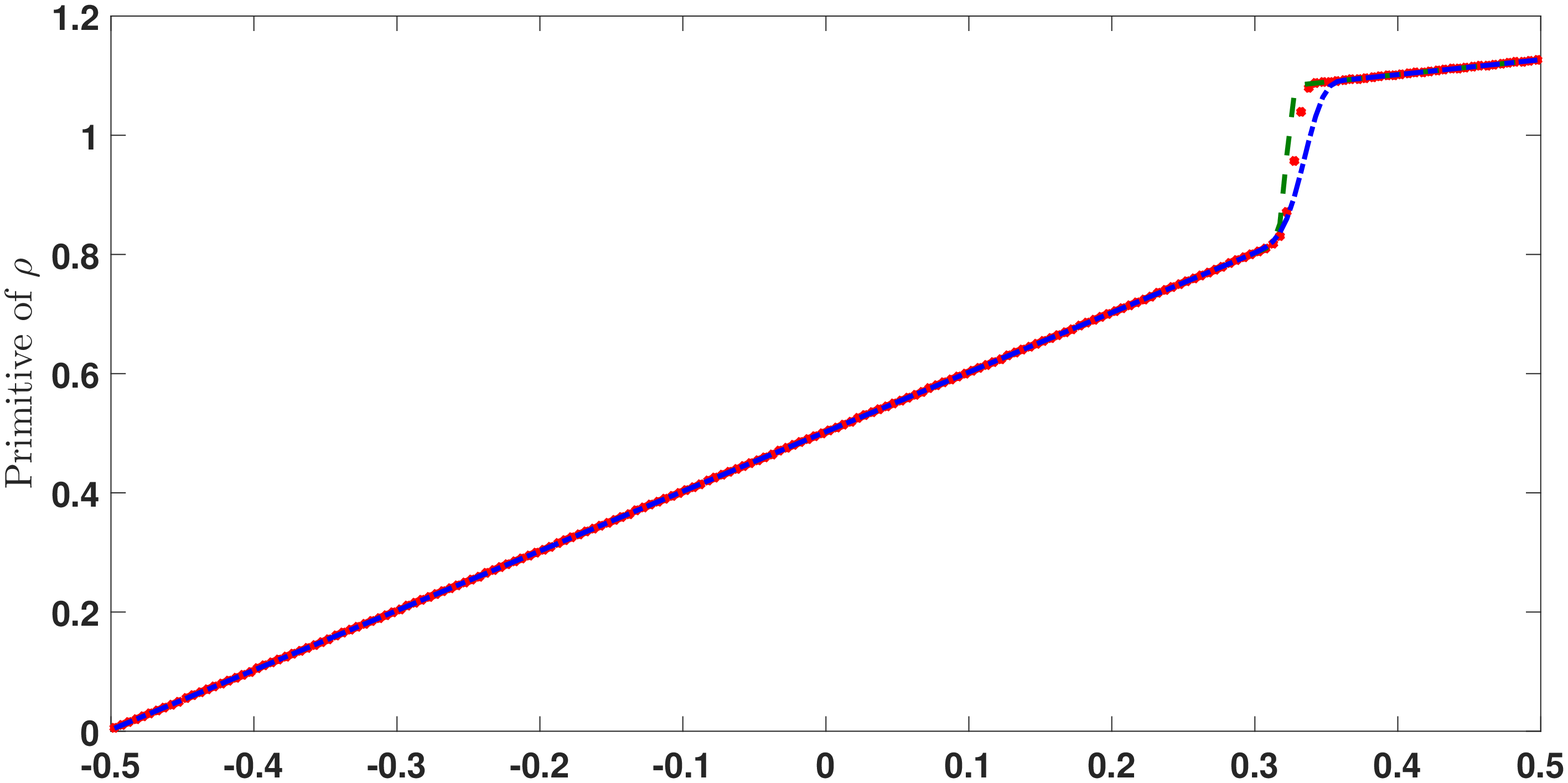}
 \caption{\textbf{(PGD)};Comparison of Primitives of $\rho_h$ from First and second-order \textbf{(DDF)} scheme: first-order Scheme({\color{blue}\protect\tikz[baseline]{\protect\draw[line width=0.5mm, dash dot] (0, .8ex)--++(1, 0) ;}}), Higher Order Scheme with Minmod Limiter({\color{red}******}); Higher Order Scheme with Superbee Limiter({\color{green}\protect\tikz[baseline]{\protect\draw[line width=0.5mm, loosely dashed] (0, .8ex)--++(1, 0)}})}\label{cfl6}
\end{figure} 
\item {\underline{Mixed Type Solutions of \textbf(PGD) system:}}
This experiment will show the performance of the first order and higher order \textbf{(DDF)} schemes to capture the solutions of \textbf{(PGD)} system, which can have both shocks and vaccum. The first initial data under consideration is given by:
\[\rho_0(x)=.5, 
, u_0(x)=
{ \left\{\begin{array}{ccl}
-0.5 & \, \mbox{if}\, &x<-0.5, \\
0.4 & \, \mbox{if}\, &-0.5<x<0,\\
0.4-x & \, \mbox{if}\, &0<x<0.5,\\
-0.4 & \, \mbox{if}\, &x>0.5.
\end{array}\right.}.
\] Let the domain be $[-1,\: 1],\: T=0.4998,\:h=0.025.$ It can be observed that the first order \textbf{(DDF)} scheme produces a hump in the solution 
\begin{figure}[H]
 \centering
 \includegraphics[width=\textwidth,keepaspectratio ]{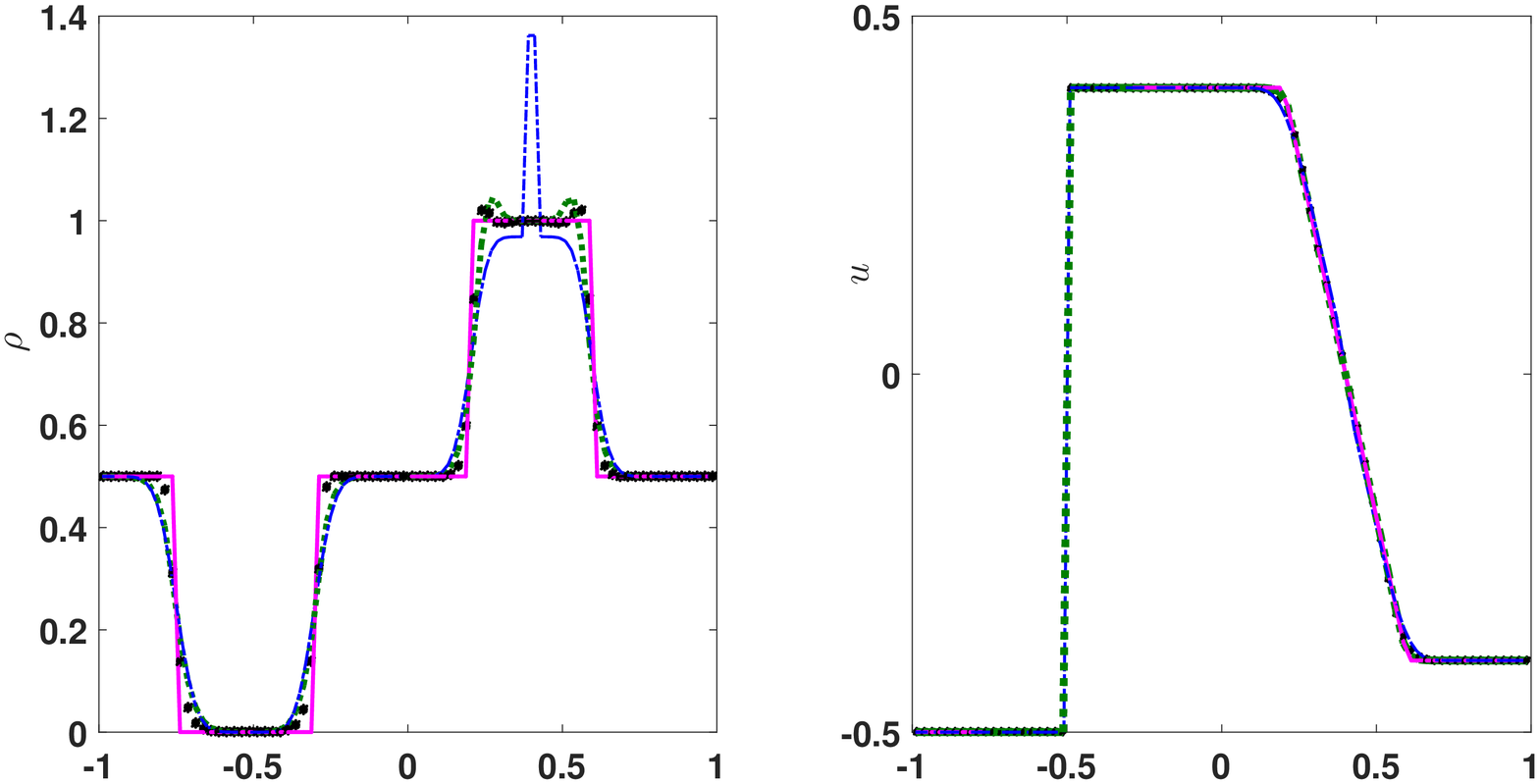}
 \caption{\textbf{(PGD)};Comparison of First and second-order \textbf{(DDF)} scheme: first-order Numerical Solution $U$({\color{blue}\protect\tikz[baseline]{\protect\draw[line width=0.5mm, dash dot] (0, .8ex)--++(1, 0) ;}}), Higher Order Numerical Solution $U$ with Minmod Limiter({\color{black}******}); Higher Order Numerical Solution $U$ with Superbee Limiter({\color{green}\protect\tikz[baseline]{\protect\draw[line width=0.5mm, loosely dashed] (0, .8ex)--++(1, 0)}}); Exact Solution $u$({\color{magenta}\protect\tikz[baseline]{\protect\draw[line width=.5mm] (0, .8ex)--++(1, 0) ;}})} \label{cfl8}
\end{figure}
in $\rho,\:$ which is resolved by the higher order version by all the limiters, though Superbee limiter, as in the case of $\delta\,-$ shock data, gives the best performance of all. The other schemes in literature, for example, the ones in \cite{berthon2006relaxation, bouchut2003numerical} also exhibit same behavior.

Next, we consider an example with the vaccum solutions. Consider \textbf{(PGD)} on the domain $[-0.5,\:0.5], T=0.5, M=200$ with 
\[\rho_0(x)=0.5, 
u_0(x)=
{ \left\{\begin{array}{ccl}
-0.5& \, \mbox{if}\, &x<0,\:\\
0.4 & \, \mbox{if}\, &x>0.
\end{array}\right.}
\] 
The exact solution for the density at  $T=0.5$ is given by:
\[
\rho(x,T)=
{ \left\{\begin{array}{ccl}
0.5& \, \mbox{if}\, &x<-0.25,\:\\
0& \, \mbox{if}\, &-0.25<x<0.2,\:\\
0.5 & \, \mbox{else}.
\end{array}\right.}
\] 
\textbf{Figure~\ref{logmi}} shows the logarithm of minimal density captured by the first-order and higher order \textbf{(DDF)} scheme over time, which are of the order $-24$ and $-128$ respectively and indicates that the scheme is able to capture the vaccum well.
\begin{figure}[H]
\centering
\includegraphics[scale=.5]{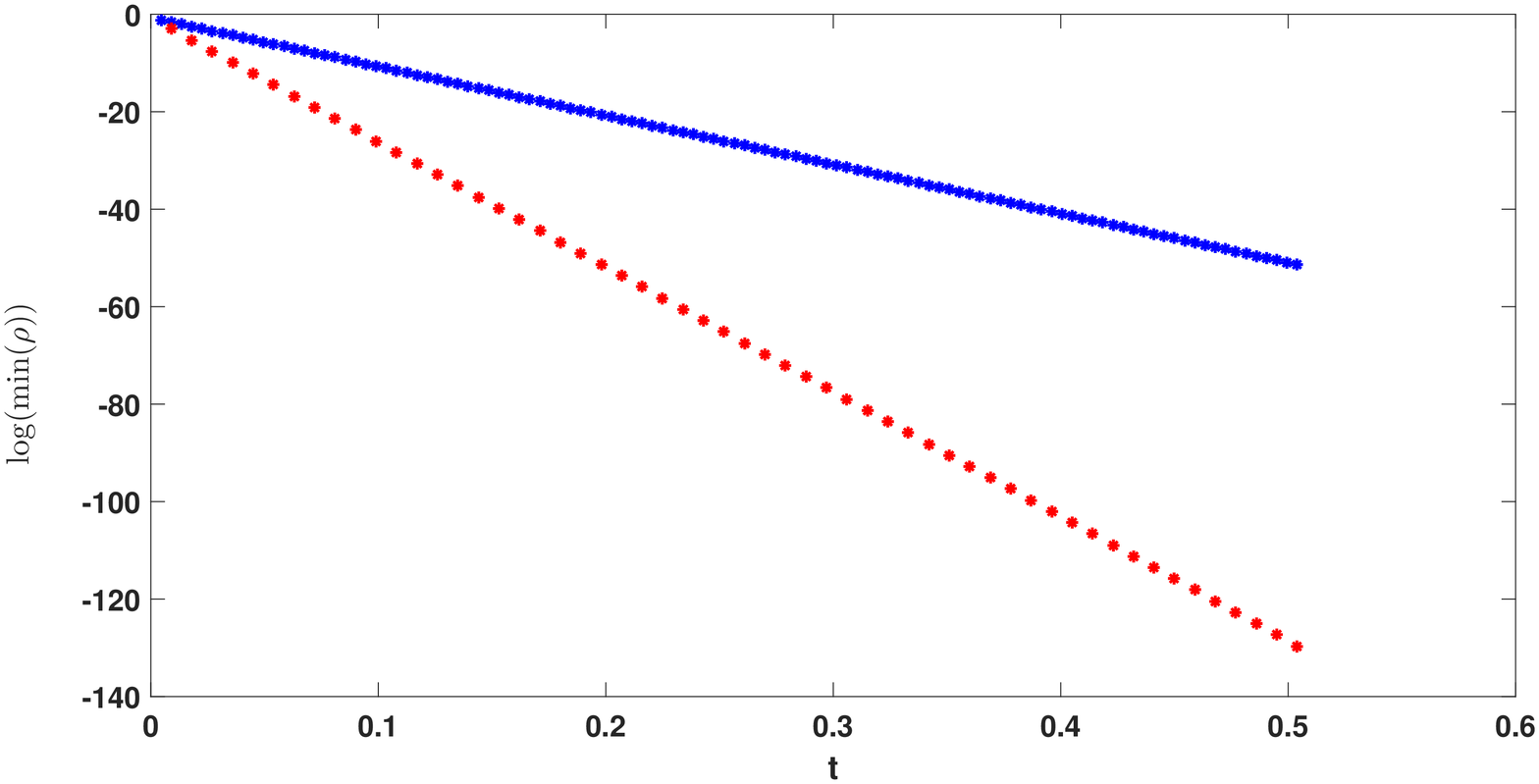}\\\includegraphics[scale=.5]{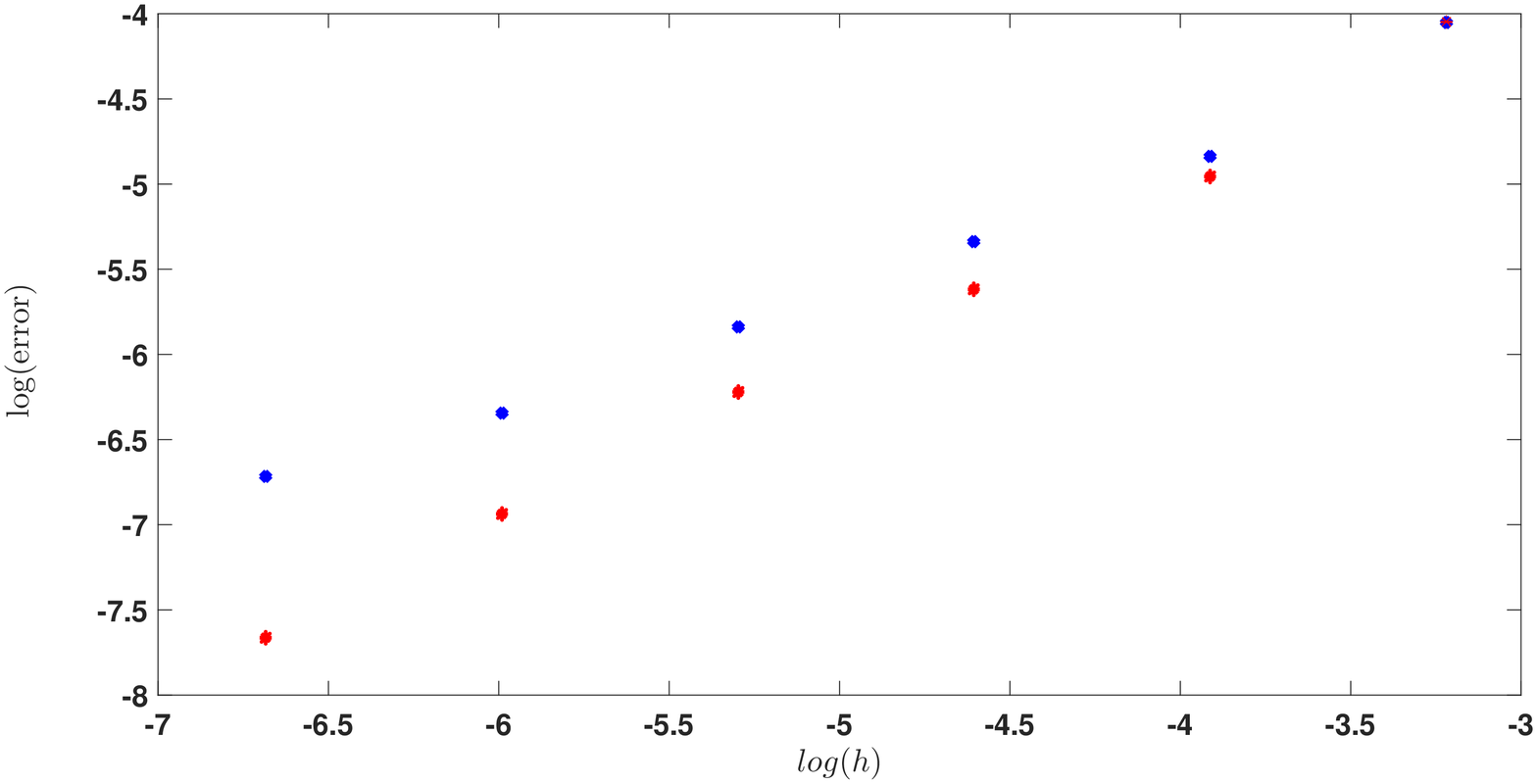}
\caption{Logarithmic of Minimal Density for Vaccum Solutions(Left); Logarithmic $L^1$ Error vs Logarithmic Space  Discretization $h$ in Vaccum Experiment(Right);  First Order ({\color{blue}****}), Second Order ({\color{red}****})}
\label{logmi}
\end{figure} 
Since the exact solution is known, we now verify the convergence rates of the scheme. \textbf{Figure~\ref{logmi}} shows that the slopes of the graphs of the logarithm of $L^1$ error versus $\log(h)$ are around $0.72$ and $1$ for first and higher order $\textbf{(DDF)}$ schemes respectively, which indicate that the convergence rates are near the expected convergence rates for discontinuous solutions.
\item  \underline{Interaction of two singular
shocks for \textbf{(PGD)}}: This experiment is to show the performance of \textbf{(DDF)} scheme for the piecewise constant initial data which develops  $\delta\,-$ shock as time progresses. This data has been used to describe \textbf{(PGD)} system as a model of collision of two semi-infinite clouds of dust from
the moment of impact onwards in \cite{leveque2004dynamics}, where the emerged  $\delta\,-$ shock is called as delta double-rarefaction when both clouds have been fully accreted. We demonstrate the performance of the scheme to capture this phenomenon by numerically solving  \textbf{(PGD)} system subject to the following initial data
\[\rho(x,-1)=
{ \left\{\begin{array}{cll}
2 & \, \text{if}\, -2<x<-1, \\
1 & \, \text{if}\, 1<x<5,\\
0 & \, \, \text{otherwise,}
\end{array}\right.}\quad  u(x,-1)=
{ \left\{\begin{array}{cll}
1 & \, \text{if}\,  -2<x<-1, \\
-1 & \, \text{if}\, 1<x<5,\\
0 & \,\, \text{otherwise,}
\end{array}\right.}
\] \begin{figure}[h]
    \centering
    \includegraphics[width=\textwidth,keepaspectratio]{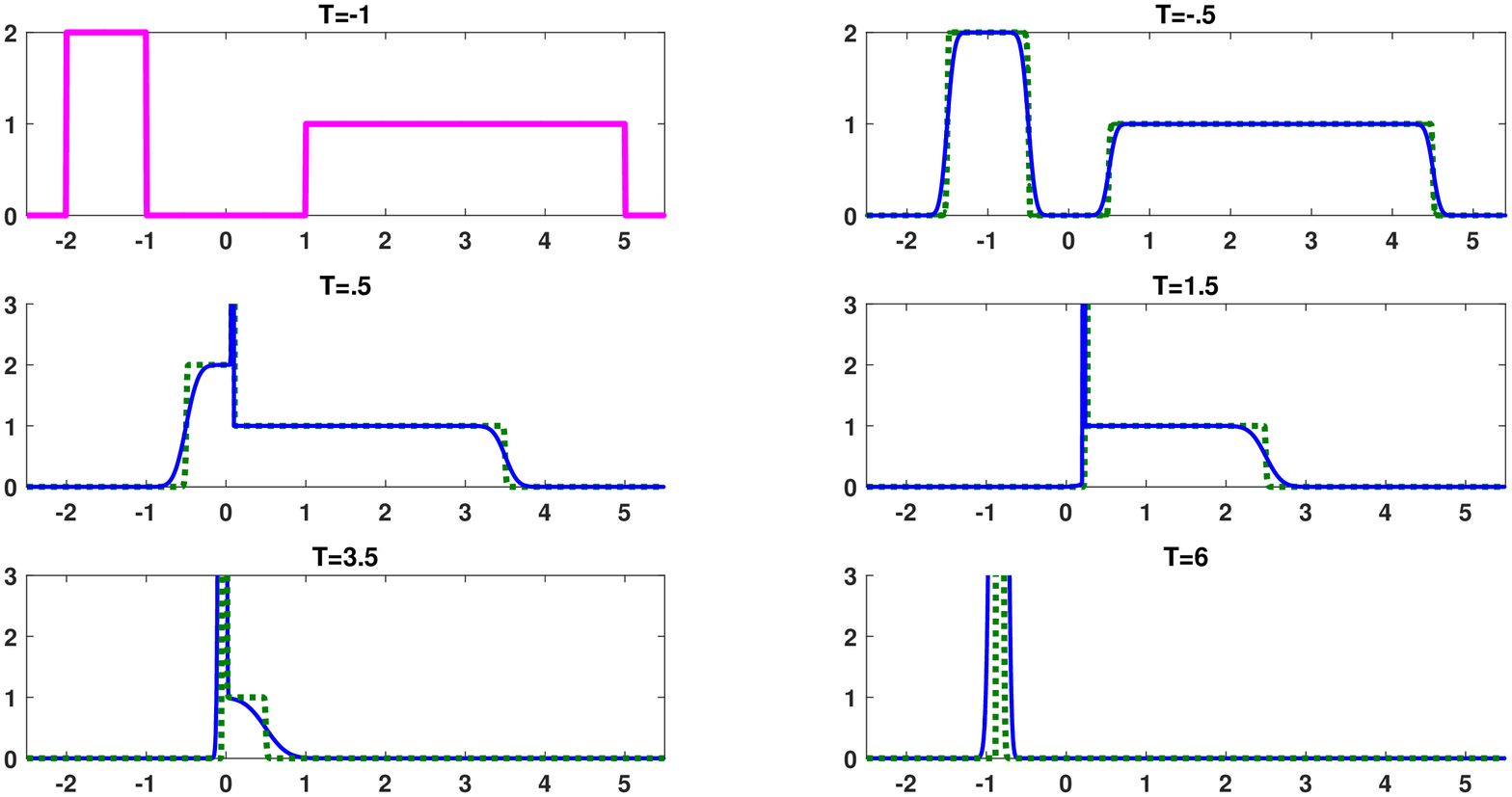}
    \caption{\textbf{(PGD)};Comparison of First and second-order \textbf{(DDF)} scheme: first-order Numerical Solution $\rho$({\color{red}\protect\tikz[baseline]{\protect\draw[line width=0.5mm, dash dot] (0, .8ex)--++(1, 0) ;}}), Higher Order Numerical Solution $\rho$ with Superbee Limiter({\color{green}\protect\tikz[baseline]{\protect\draw[line width=0.5mm, loosely dashed] (0, .8ex)--++(1, 0)}}); Initial Data $\rho$({\color{magenta}\protect\tikz[baseline]{\protect\draw[line width=.5mm] (0, .8ex)--++(1, 0) ;}})}
    \label{leveque}
\end{figure} where the initial data can be seen as collision of 2 dust clouds which are of length $1$ and $4$, which move in opposite directions. Let the domain be $[-2.5,\: 5.5],\: h=0.01.$ \textbf{Figure \ref{leveque}} plots the numerical densities at times $T=-1, -0.5, 0.5, 1.5 ,3.5, 6$. The locations of the shocks have been compared with \cite[Fig.~2,3]{leveque2004dynamics} and it can be seen that the scheme captures the shocks at the expected locations. All the figures have been plotted on the
same vertical scale with peak values of the computed density clipped near in the region of the delta
functions. Moreover, the weights of the  $\delta\,-$ shocks captured by scheme described in \cite{leveque2004dynamics} at $T=0.5, 1.5, 3.5$ and $T=6$ are $20.2, 58.2, 51.6$ and $29.8$ respectively, while for first order \textbf{(DDF)} scheme, they are $73.9,301.2,93.14$ and $40.3$ and for higher order \textbf{(DDF)}, 
they are $88.2,331.1,157$ and $99.7$ respectively. The number of the cells, which have not been shown to display the clipped densities, are similar to those in \cite{leveque2004dynamics} and around 4 and 11 on either side of the  $\delta\,-$ shock at times $T=3.5$ and $T=6.$
\item \underline{Moving Shocks of \textbf{(GPGD)}}:
This experiment is to show the performance of the first order \textbf{(DDF)} scheme to capture entropic $\delta\,-$ shocks for moving shocks of \textbf{(GPGD)}. With $g(u)=u^3$ and initial data as Riemann Data $(\rho_l,\:\rho_r)=(1,\:0.25), (u_l ,\:u_r)=(1, 0)$ on the domain $[-0.5,\:0.5], $ with $T=0.4988,\:h=0.025,\:\mbox{cfl}=0.5$, it can be seen in \textbf{Figure~\ref{below}} that the scheme captures the entropic  $\delta\,-$ shock as the shock location lies between $g(0)$ and $g(1)$, which fits the results of \cite{huang,mitrovic2007delta}.
\begin{figure}[H]
 \centering
 \includegraphics[width=\textwidth,keepaspectratio]{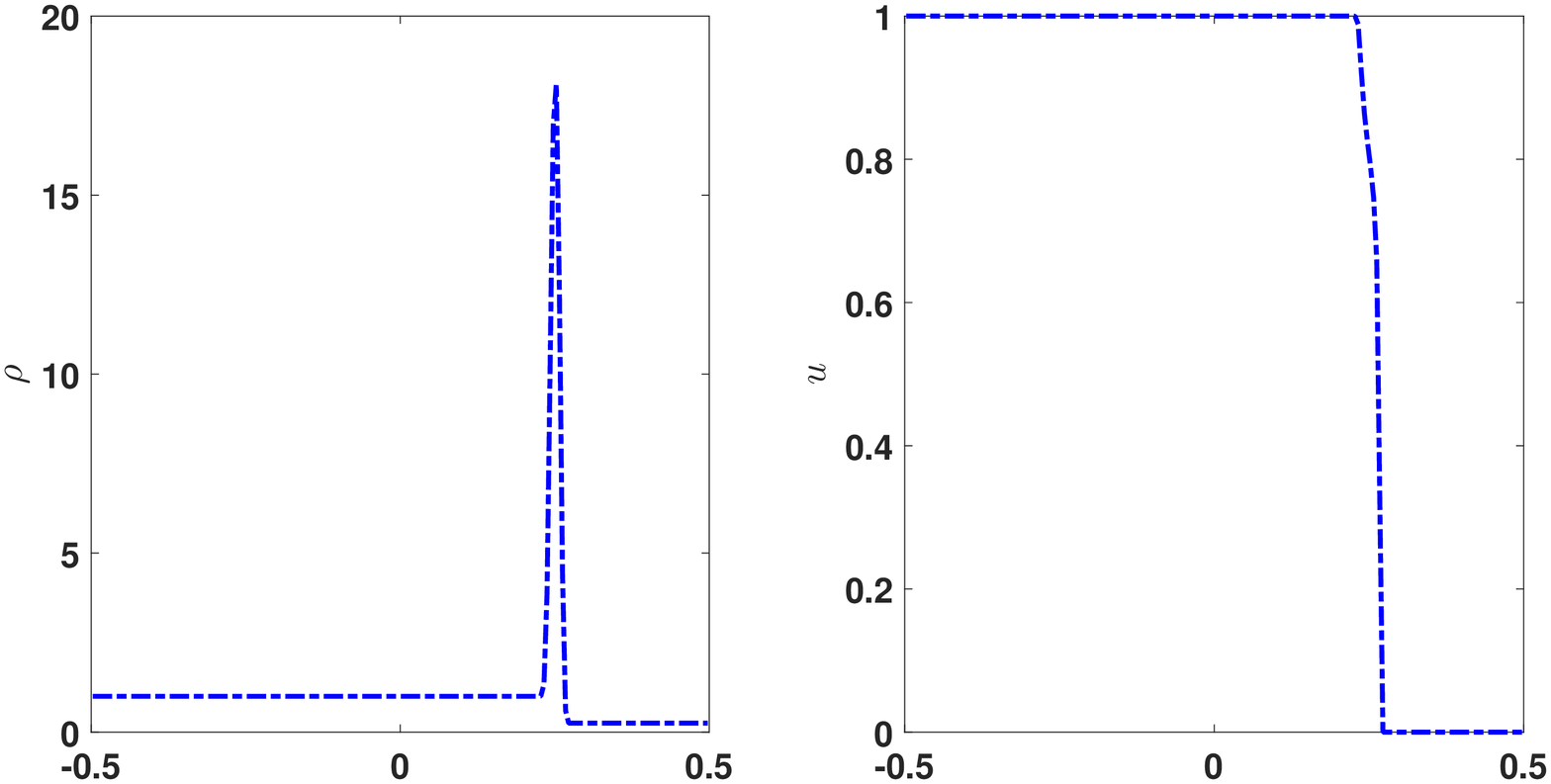}
 \caption{\textbf{(GPGD)}: Numerical Solution $U$({\color{blue}\protect\tikz[baseline]{\protect\draw[line width=0.5mm, dash dot] (0, .8ex)--++(1, 0) ;}})}
 \label{below}
\end{figure}
\item {\underline{The system \eqref{hy}-\eqref{hy1} with  $P=0,\:S=\beta \rho$:}}
This experiment is to display the performance of the first order \textbf{(DDF)} scheme for capturing the solutions of \eqref{hy}-\eqref{hy1} with zero pressure and non-zero source term $\beta \rho$.
Like \textbf{(PGD)}, this system is also non--strictly hyperbolic and has double eigenvalue $u+\beta t$. It has been established in \cite{shen2015riemann} that whenever $u_l>0>u_r$, it admits $\delta\,-$ shock at $x(t)=v_\delta t +\frac{1}{2}\beta t^2$ with the weight $w(t)=w_\delta(t)$, where \[v_\delta=\frac{\sqrt{\rho_l}u_l+\sqrt{\rho_r}u_r}{\sqrt{\rho_l}+\sqrt{\rho_r}},\:w_\delta=\sqrt{\rho_l\rho_r}[u], \] 
 and when $u_l<u_r, $ it admits the following vacuum solution 
\[(\rho,\:v)(x,\:t)=
{ \left\{\begin{array}{ccl}
(\rho_l,\:u_l)& \, \mbox{if}\, &x<u_lt+\frac{1}{2}\beta t^2, \\
(\rho_r,\:u_r) & \, \mbox{if}\, &x>u_rt+\frac{1}{2}\beta t^2\\
\mbox{Vacuum}& \, \mbox{Otherwise}.
\end{array}\right.}
\]
To extend the first order \textbf{(DDF)} scheme for this system, we follow the approach of \cite{shen2015riemann} and write the system as \begin{eqnarray*}
\rho_t + (\rho (v+\beta t))_x=0,\:\\
w_t + (w(v+\beta t))_x=0,\:
\end{eqnarray*}
where $w=\rho v.$
The first equation can be treated like \eqref{1} for given $v(x,\:t)+\beta t$. For given $\rho(x,\:t)>0,\:$ the function \[w\mapsto\frac{w^2}{\rho(x,\:t)}+w\beta t, \] is a convex function with the minimum at $-0.5\rho\beta t$ and hence, the second equation can be treated in a similar way to \eqref{second}. Hence, the numerical fluxes can be defined in the following way:
\[F_{i+\frac{1}{2}}^{n}=\begin{pmatrix} F^{\rho,\:n}_{i+\frac{1}{2}},\max\Big(q(\rho_i^n,\:\max(w_i^n,\:G_i^n)), q(\rho_{i+1}^n,\:\min(w_{i+1}^n,\:G_{i+1}^n)) \Big)\:
\end{pmatrix}^{T}\]
where $q(\rho,\:w)=\frac{w^2}{\rho},\:G_i^n=-0.5\rho_i^n\beta t^n.$
We consider the domain $[-1.2,\:1.2]$ with $T=0.4983,\:M=500$ and $\beta=0.5$. It can be seen in \textbf{Figure~\ref{fig:1}} that the scheme is able to capture the expected weight and location of  $\delta\,-$ shock efficiently. 
\begin{figure}[h]
 \centering
 \includegraphics[width=\textwidth,keepaspectratio]{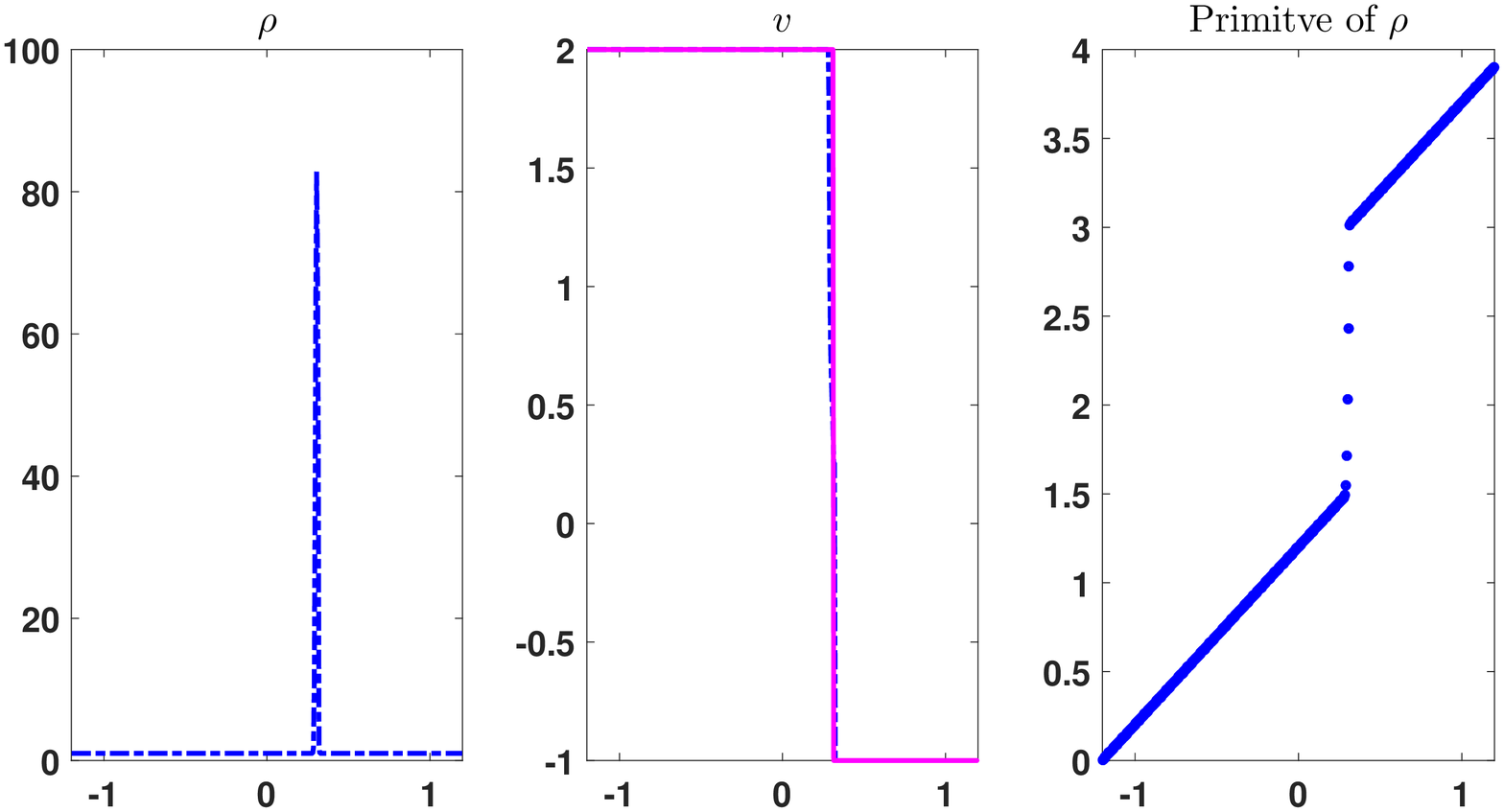} 
 \caption{\textbf{(PGDS)}:Numerical Solution $U$({\color{blue}\protect\tikz[baseline]{\protect\draw[line width=0.5mm, dash dot] (0, .8ex)--++(1, 0) ;}}), Exact Solution $v$({\color{magenta}\protect\tikz[baseline]{\protect\draw[line width=.5mm] (0, .8ex)--++(1, 0) ;}}), Primitive of $\rho$({\color{blue}****})}
 \label{fig:1}
\end{figure}
 The performance of the scheme to capture vacuum solutions has been displayed in  \textbf{Figure~\ref{fig:13}}, with $\rho_l=\rho_r=1, u_l=-2$ \begin{figure}[H]
 \centering
 \includegraphics[width=\textwidth,keepaspectratio ]{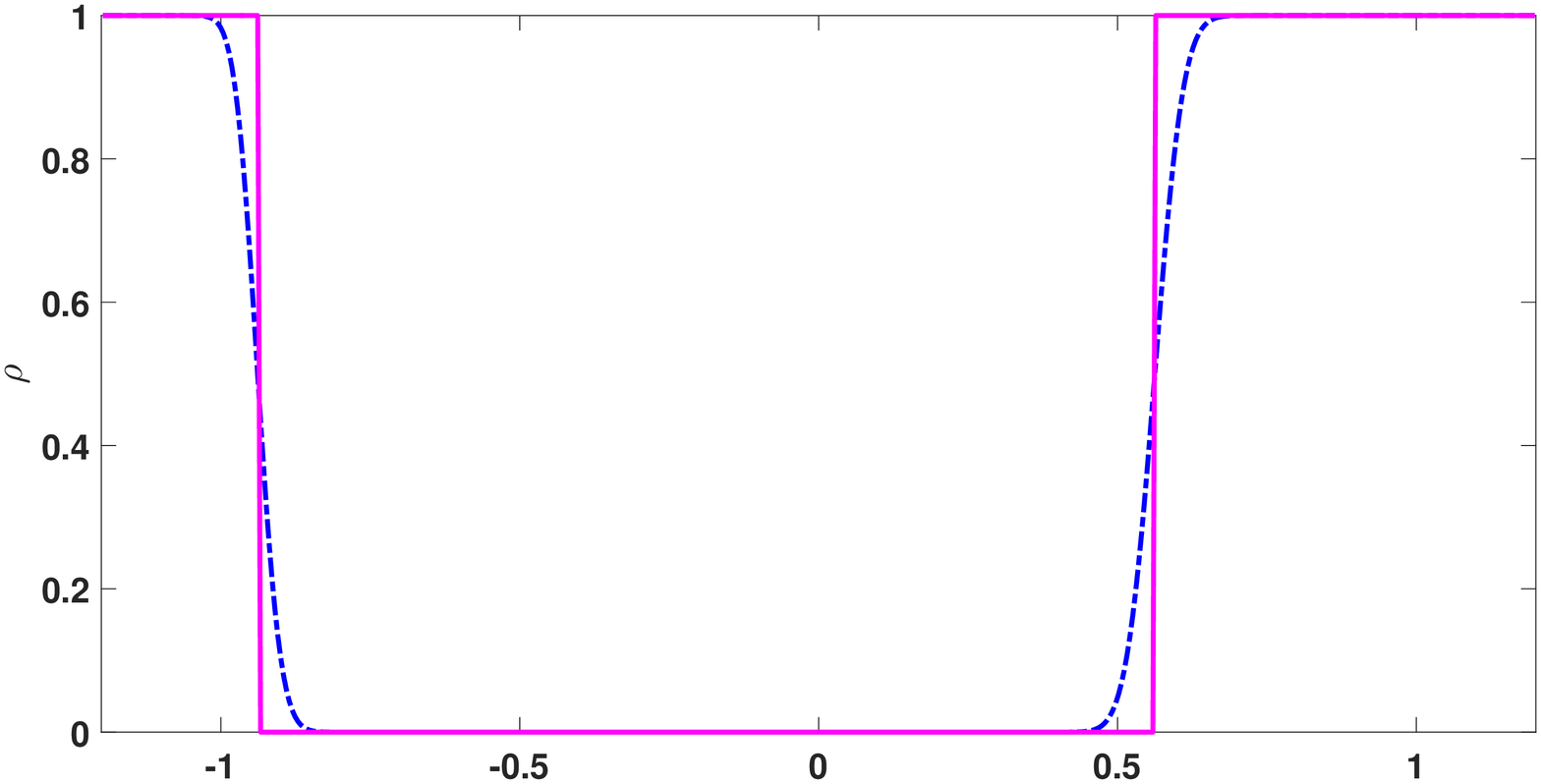} 
 \caption{\textbf{(PGDS)}: Numerical Solution $\rho$({\color{blue}\protect\tikz[baseline]{\protect\draw[line width=0.5mm, dash dot] (0, .8ex)--++(1, 0) ;}}), Exact Solution $\rho$({\color{magenta}\protect\tikz[baseline]{\protect\draw[line width=.5mm] (0, .8ex)--++(1, 0) ;}})}
 \label{fig:13}
\end{figure}
and $u_r=1.$ It is clear that the scheme can capture the vacuum solutions well, where the minimal density is of the order $10e-233$.
\item  {\underline{The system \eqref{hy}-\eqref{hy1} with  $P\ne0,\:S=0$:}}
We now extend \textbf{(DDF)} to capture the $\delta\,-$ shock type solutions of \eqref{hy}-\eqref{hy1}
where zero source, but with non--zero pressure.
 Since the first equation does not change, there is no change in its approximation. For the second equation, since the pressure term $P(\rho)$ is only a function of $\rho,\:$ hence the second equation can still be considered as a convex-convex discontinuous flux for a given $\rho(x,\:t^n)$ and the numerical flux is given by:
\begin{equation}\label{FwNs}
 F^{w,\:n}_{i+\frac{1}{2}}=\max\left(\frac{{(\max(w_i^n,\:0))}^2}{\rho_i^n}+P(\rho_i^n),\:\frac{{(\min(w_{i+1}^n,\:0))}^2}{\rho_{i+1}^n}+P(\rho_{i+1}^n)\right). 
\end{equation}
We now use this extension of \textbf{(DDF)} scheme to compute the numerical solutions of
\textbf{(CGD)}, which have non--zero $P$. 
We now exhibit the performance of the extended scheme for the physical systems described in the beginning of the article.
\begin{enumerate}
\item \underline{ $\delta\,-$ shocks of \textbf{(CGD)}}: 
This system is of type \eqref{hy} with $S=0,\:P=s\rho^{-\alpha}.$
It has been pointed out in \cite{wang2013riemann} that whenever $u_l\ge u_r,\:0<\alpha<1$, this system admits  $\delta\,-$ shocks at the location $x(t)=u_\delta t, $ where 
\[w_\delta=\sqrt{\rho_l \rho_r {[u]}^2 -[\rho][P]}, u_\delta =
\displaystyle\frac{[w]t +w_\delta}{[\rho]}.\]
\textbf{Figure~\ref{Fig 6_co}} shows the performance of extension of \textbf{(DDF)} scheme 
with $\alpha=0.5,\:s=5 ,\:M=1000$, with initial data as Riemann data $\rho_l =3,\:\rho_r =1,\:u_l=4, u_r=-4$ on the domain $[-2,\:2]$ and at times $T_1=0.05,\:T_2=0.1996$.
\begin{figure}[H]
 \centering
 \includegraphics[width=\textwidth,keepaspectratio]{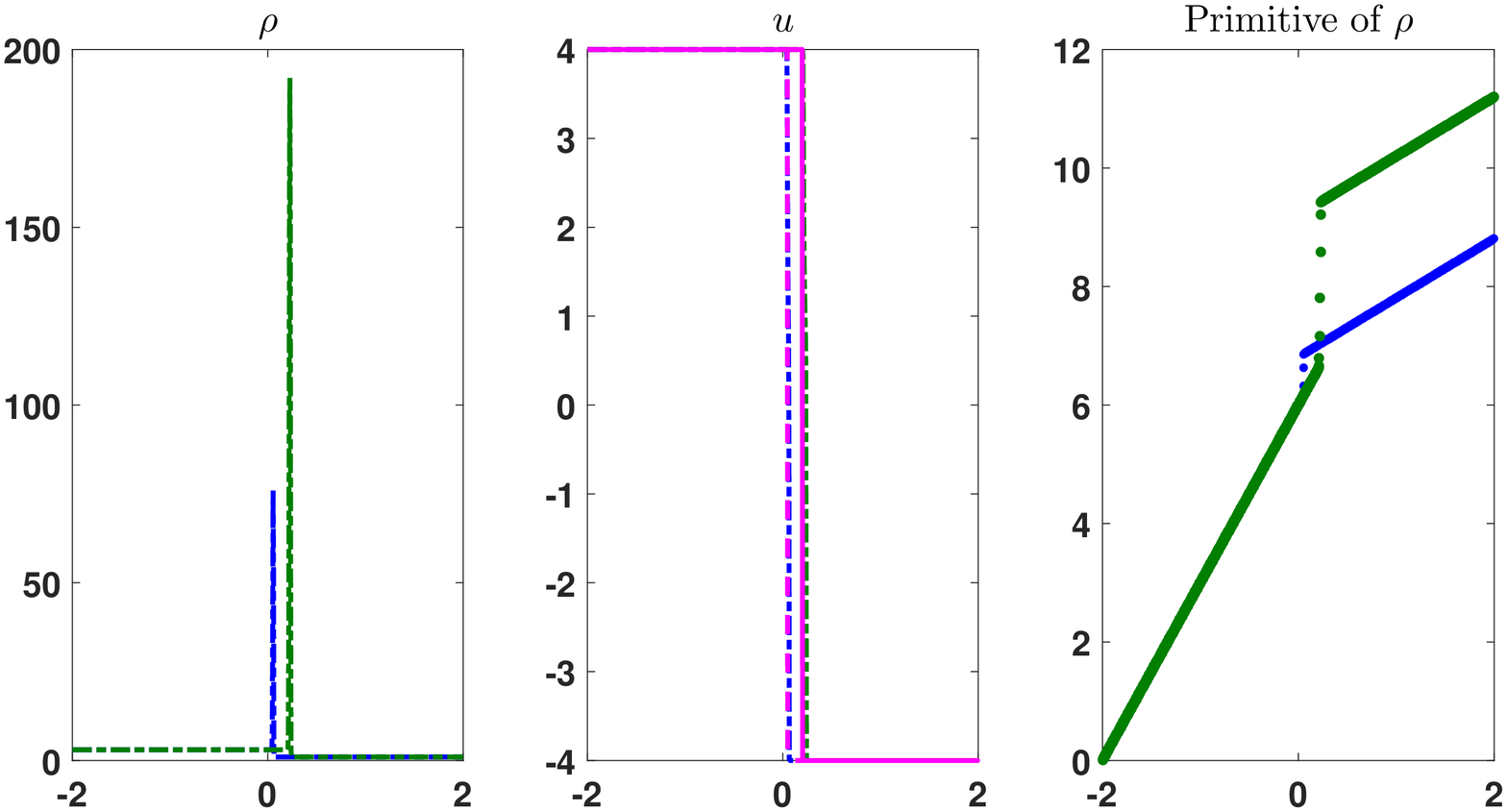}
 \caption{\textbf{(CGD)}:Numerical Solution $U(\cdot,\:T_1)$({\color{blue}\protect\tikz[baseline]{\protect\draw[line width=0.5mm, dash dot] (0, .8ex)--++(1, 0) ;}}), Exact Solution $u(\cdot,\:T_1)$({\color{magenta} \protect\tikz[baseline]{\protect\draw[line width=.5mm] (0, .8ex)--++(1, 0) ;}}), Primitive of $\rho(\cdot,\:T_1)$({\color{blue}*****}), Numerical Solution $U(\cdot,\:T_2)$({\color{darkgreen}\protect\tikz[baseline]{\protect\draw[line width=0.5mm, dash dot] (0, .8ex)--++(1, 0) ;}}), Exact Solution $u(\cdot,\:T_2)$({\color{magenta} \protect\tikz[baseline]{\protect\draw[line width=.5mm] (0, .8ex)--++(1, 0) ;}}), Primitive of $\rho(\cdot,\:T_2)$({\color{darkgreen}*****})}
 \label{Fig 6_co}
\end{figure}
It can be seen that the \textbf{(DDF)} scheme can capture the expected shock location for $\rho$ in contrast to the solutions displayed in \cite[Fig.~4.1]{wang2013riemann} where the shocks lag behind a few units. Also, the height of  $\delta\,-$ shock is more than those presented in \cite[Fig.~4.1]{wang2013riemann}. 

To further evaluate the efficiency of our scheme, we compare the numerical weight of the $\delta\,-$ shock with the expected weight $w_\delta T, $ where $w_\delta=14.0081$ for the given data. expected weights are hence given by $0.7004$ and $2.7960$. It can be seen in \textbf{Figure~\ref{Fig 6_co}} that the numerical weight matches well with the expected weight.
The locations for numerical  $\delta\,-$ shocks at $T_1$ and $T_2$ are compared with their exact locations in \textbf{Table.~\ref{tab:LL2}} .
 \begin{table}[H]\centering\begin{tabular}{|c|c|c|c|}
 \hline
 \textbf{Time} & \textbf{Exact Shock Location} & \textbf{Observed Shock Location} \\
 \hline
 $T_1$ & $0.0538$&$0.54$\\
 \hline
 $T_2$ &$0.2170$&$0.226$\\
 \hline
 \end{tabular}
 \caption{\textbf{(CGD)}: Location of  $\delta\,-$ shock by \textbf{(DDF)} scheme}
 \label{tab:LL2}
 \end{table}
 \end{enumerate}
 \end{enumerate}
\subsection{Two Dimensional Experiments}
We consider \textbf{(PGD)} system in multi dimensions with different initial conditions and show the performance of higher order \textbf{(DDF)} scheme with Superbee Limiter, used for $\rho,u$ and $v$. The first order \textbf{(DDF)} scheme behaves in a similar manner, with some expected diffusion.
\begin{enumerate}[label=Experiment \arabic*:]
\item {\underline{Vaccum Solutions:}} We first consider the following test data, considered in \cite{yang2013discontinuous}:
 \[\rho(x,y,0)=0.5, (u,v)(x,y,0)=
{ \left\{\begin{array}{ccl}
(0.3,0.4) & \, \mbox{if}\,  &x>0,y>0, \\
(-0.4,0.3) & \, \mbox{if}\, &x<0,y>0,\\
(-0.3,-0.4) & \, \mbox{if}\, &x<0,y<0,\\
(0.4,-0.3) & \, \mbox{if}\, &x>0,y<0.\\
\end{array}\right.}\]
\textbf{Figure~\ref{lev_rare}} shows the numerical density and the velocity vector field at the time $T=0.1.$ We can see that the vaccum solutions have been captured well and maintain non-negative density.
The minimal density achieved by the higher order \textbf{(DDF)} scheme is of the order of $10e-4.$
\begin{figure}[H]
    \centering
    \includegraphics[width=\textwidth,keepaspectratio]{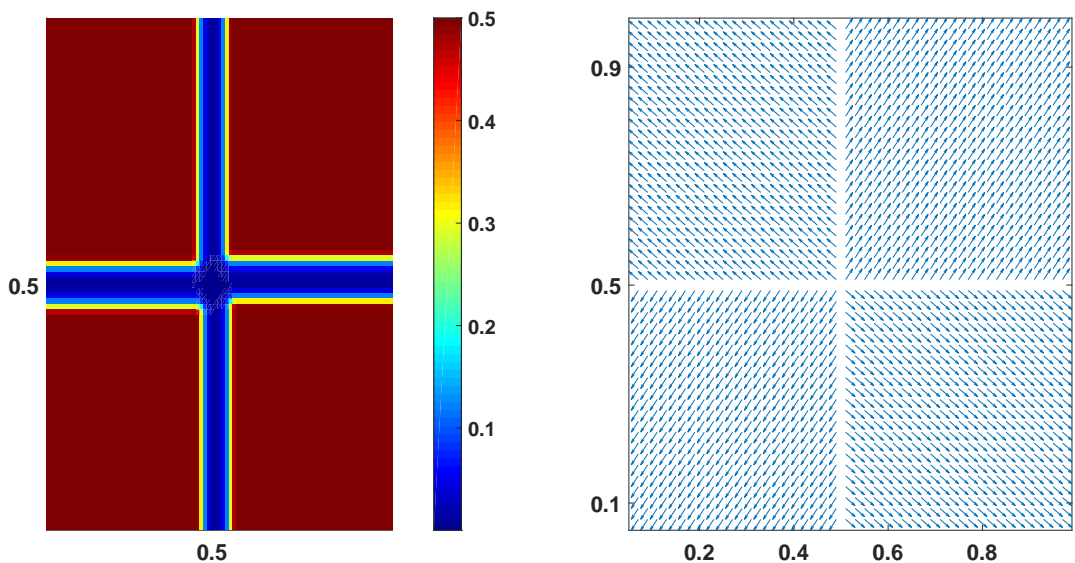}
    \caption{Numerical density (left) and velocity field (right) at $T = 0.1$}
    \label{lev_rare}
\end{figure}
\item {\underline{ $\delta\,-$ shock formation due to particles moving towards centre:}} 
We consider the following initial data considered in \cite{yang2013discontinuous}:
\[\rho(x,y,0)=1/100,\big(u(x,y,0),v(x,y,0)\big)=-\frac{1}{10}\big(\cos(\theta),\sin(\theta)\big),\]
where $\theta$ is the polar angle. \begin{figure}[H]
    \centering
    \includegraphics[width=\textwidth,keepaspectratio]{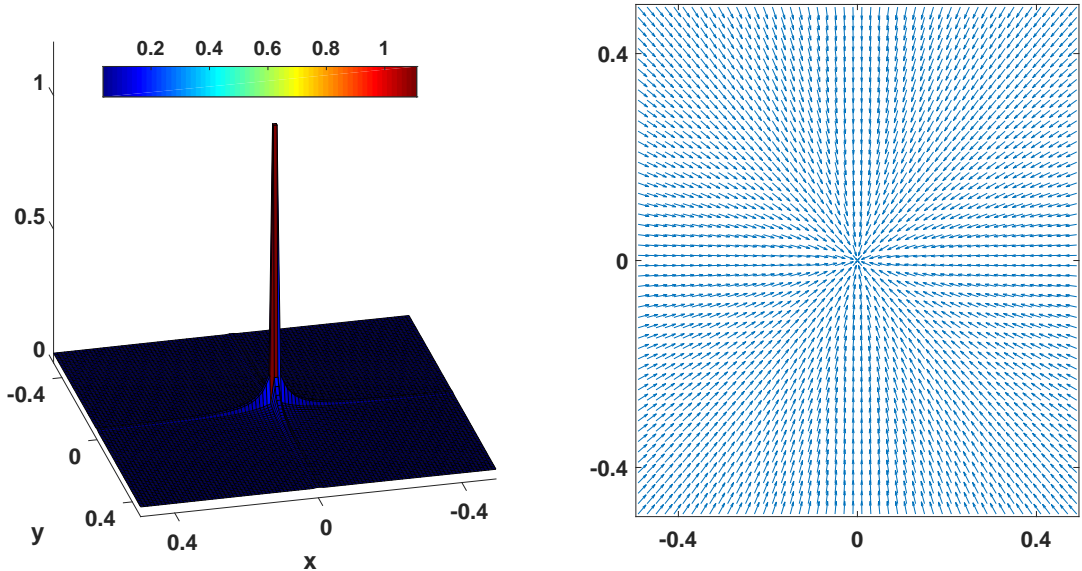}
    \caption{Density plots(left) and velocity vector plots(right) at times $T=0.5$}
    \label{shucenter}
\end{figure}
It is clear from \textbf{Figure~\ref{shucenter}} that a single  $\delta\,-$ shock of height over $1$ is formed at the origin, towards which all the particles are moving.
\item {\underline{ $\delta\,-$ shock formation due to cloud collision:}} As in one dimension, we consider an example to capture the phenomenon of cloud collision in two dimensions. The initial conditions are taken such that $v=0, u\ne0$ so as not to have any directional impact on the solutions, i.e., the particles do not change their directions and move on their own way. For this
purpose, initial data is taken to model two clouds with the same speed and density but having preassigned velocity to move in opposite directions. In particular, we consider the following initial data:
\[\big(\rho(x,y,0),u(x,y,0),v(x,y,0)\big)=
{ \left\{\begin{array}{cll}
(1,0.5,0) &  \text{ if   } x\in [-0.3,-0.2], y \in [-0.15, 0.05],\\
(1,-0.5,0) & \text{ if   }x\in [0.2,0.3],y \in [0.05,0.15],\\
(0.1,0,0) &\text{ otherwise}.
\end{array}\right.}\]A part of each cloud is physically 
expected to entirely
merge at $T\approx 0.52$, and another part of the cloud moves further on its own way post this time. The results are compared with \cite[Fig~~2]{jung2020relaxation}, where the highest densities achieved are $1.4, 8$ and $10$ at times $T=0.2,0.52$ and $0.8$. The results given by \textbf{(DDF)} scheme in \textbf{Figures~\ref{cloud}} and \ref{section1} show that the density concentrations are well captured by the scheme with slightly higher concentrations than those captured by the relaxation schemes in \cite{jung2020relaxation}.  As expected physically, there are no fluctuations in the $y-$ direction since $v=0$. It can be seen from the velocity plot at time $T=0.8,$ that a strong  $\delta\,-$ shock is formed at the centre due to collision of clouds, but the remaining part of the clouds, keep their velocity as it is and move in their own directions.\begin{figure}[H]
    \centering
    \includegraphics[width=\textwidth,keepaspectratio]{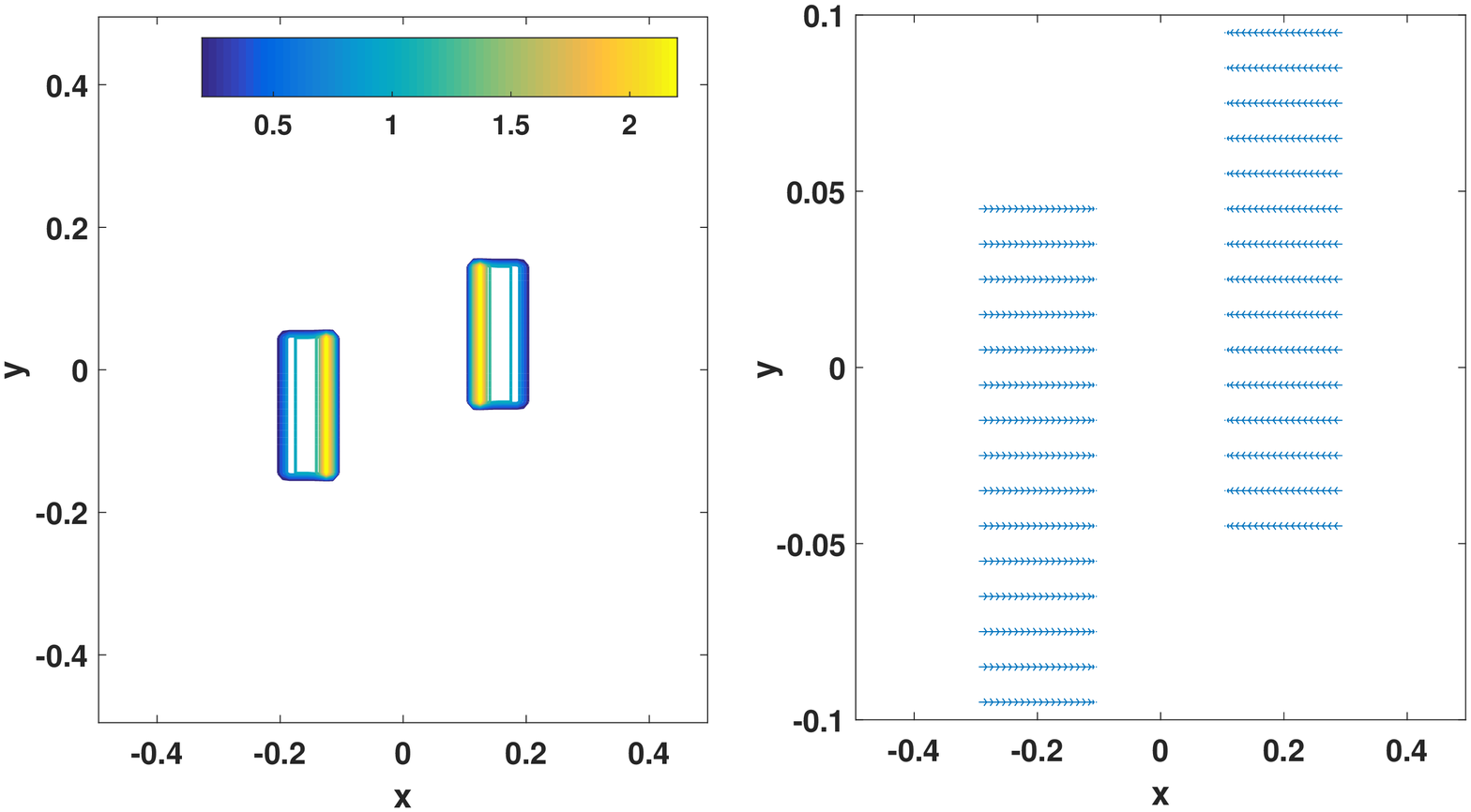}
  \includegraphics[width=\textwidth,keepaspectratio]{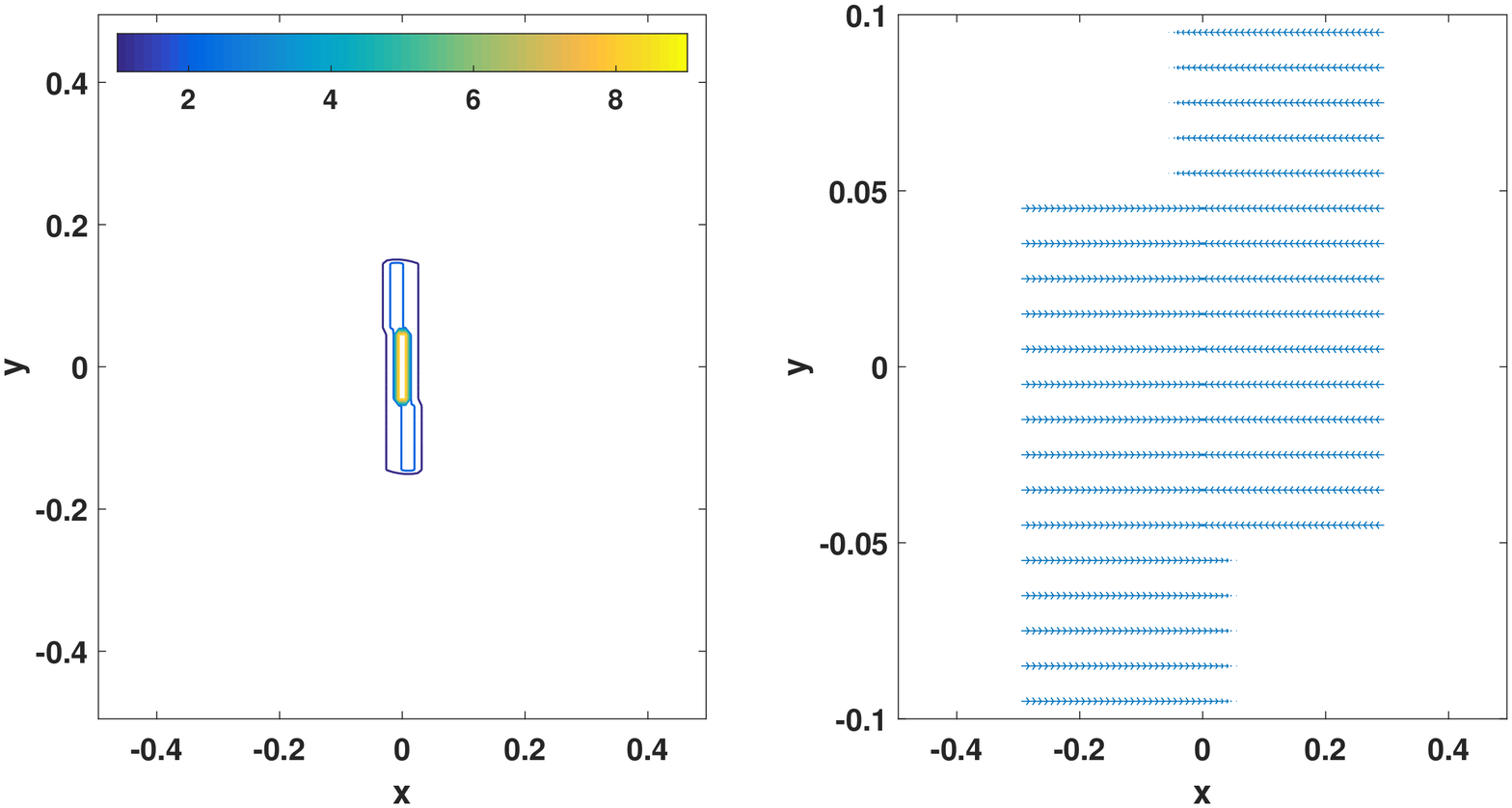}
    \includegraphics[width=\textwidth,keepaspectratio]{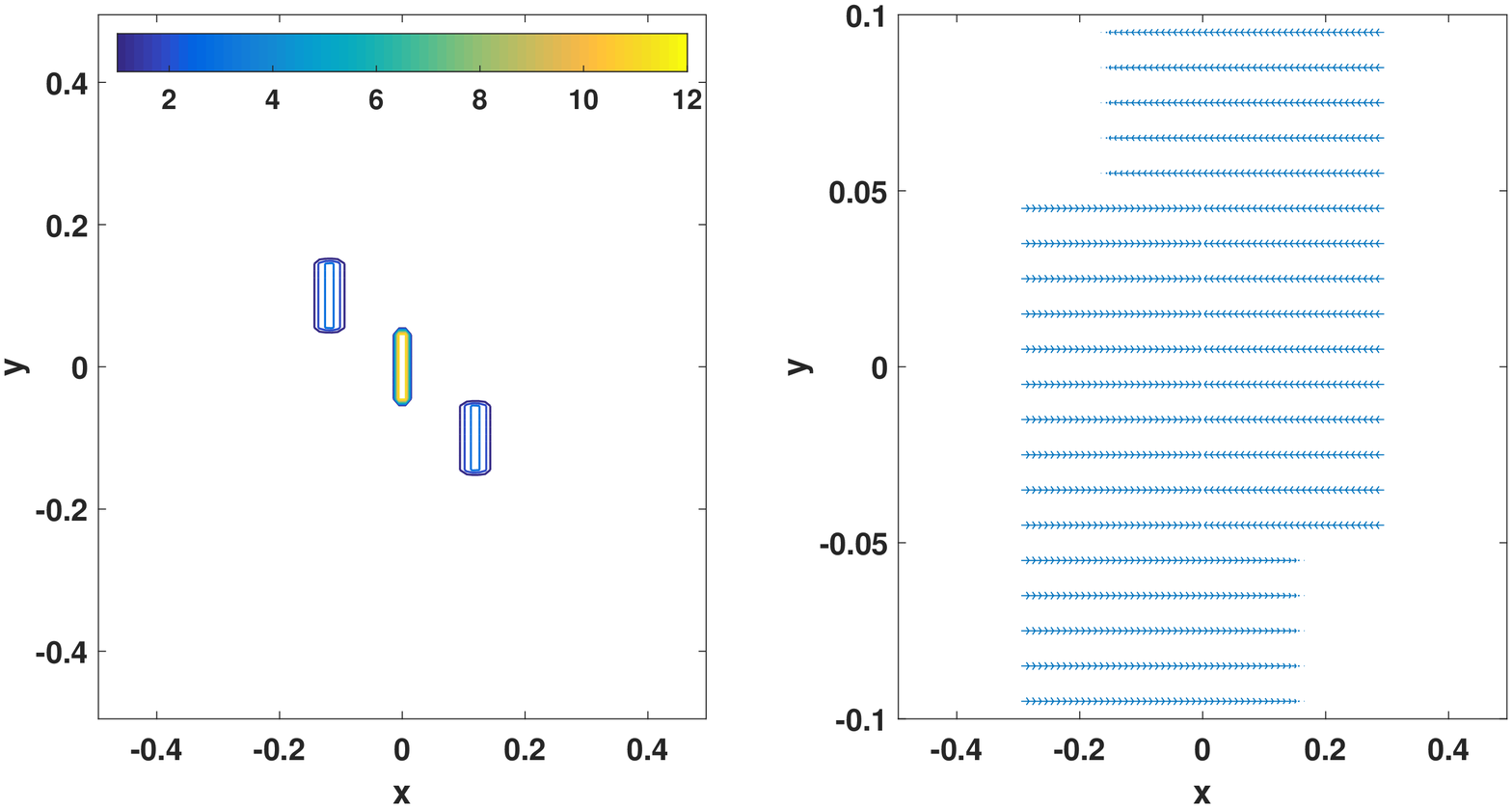}
    \caption{Density contour plots(left) and velocity vector plots(right) at times $T=0.2,0.52,0.8$}
    \label{cloud}
\end{figure}
\begin{figure}[H]
    \centering
    \includegraphics[width=\textwidth]{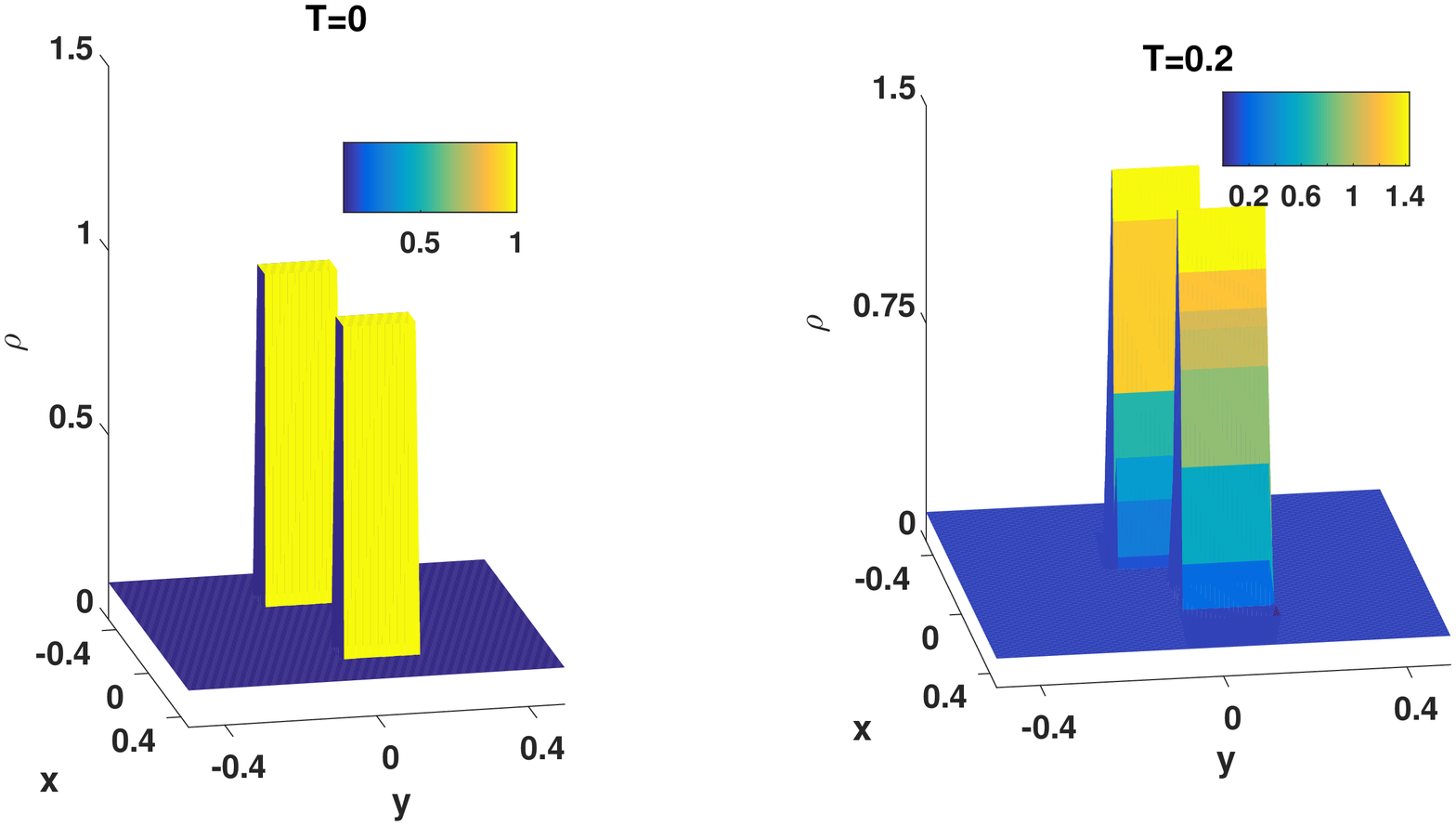}
     \includegraphics[width=\textwidth,keepaspectratio]{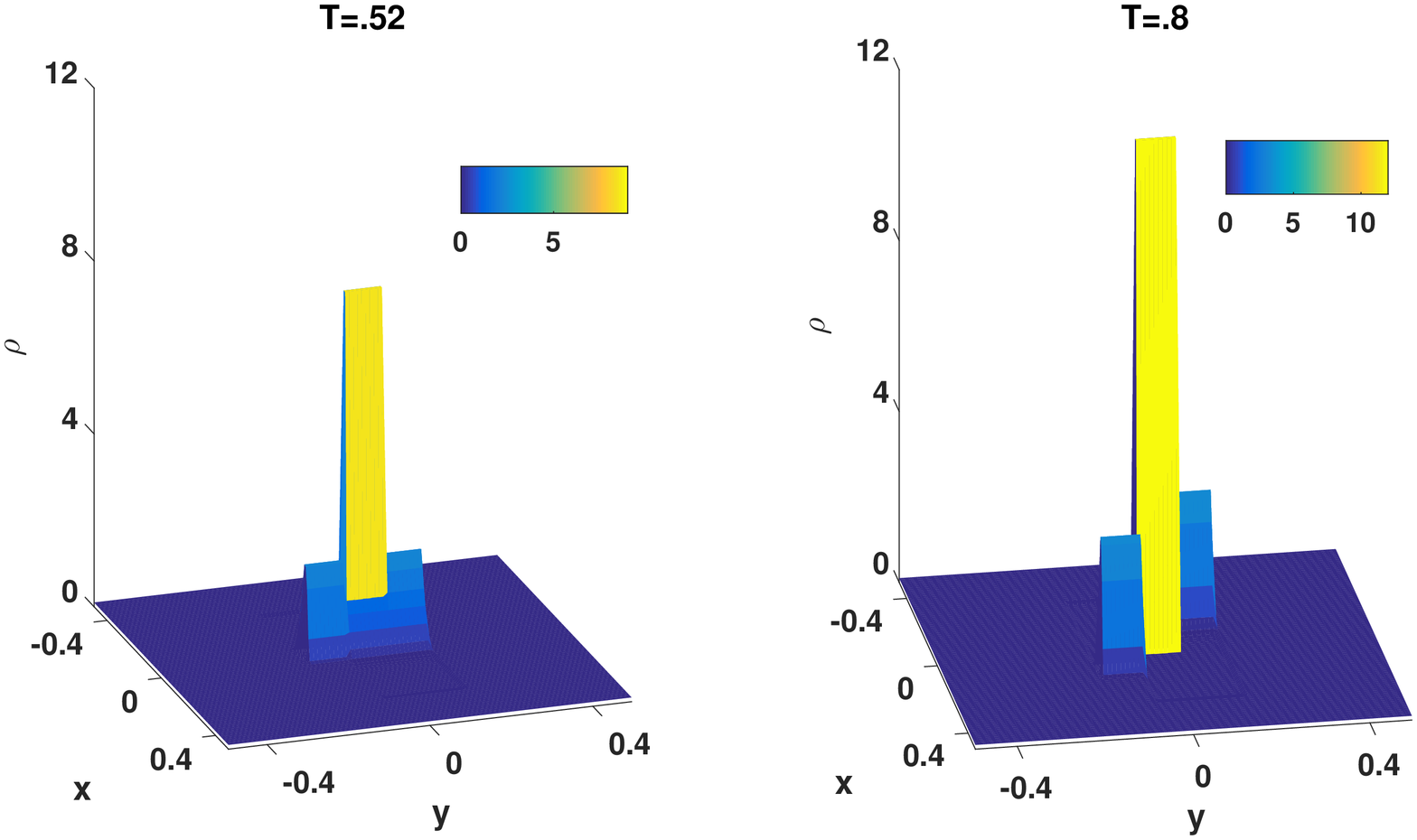}
    \caption{Density three dimensional plots $T=0,0.2,0.52,0.8$}
    \label{section1}
\end{figure}
Next, we plot the density and velocity profiles in \textbf{Figure~\ref{section}} at $T=0.52 $ and $T=0.8$ at the section $x=0$ to see the behavior around the  $\delta\,-$ shock. It can be seen that the density and velocity are stable and do not have any non-physical fluctuations.
\begin{figure}[H]
    \centering
    \includegraphics[width=\textwidth,keepaspectratio]{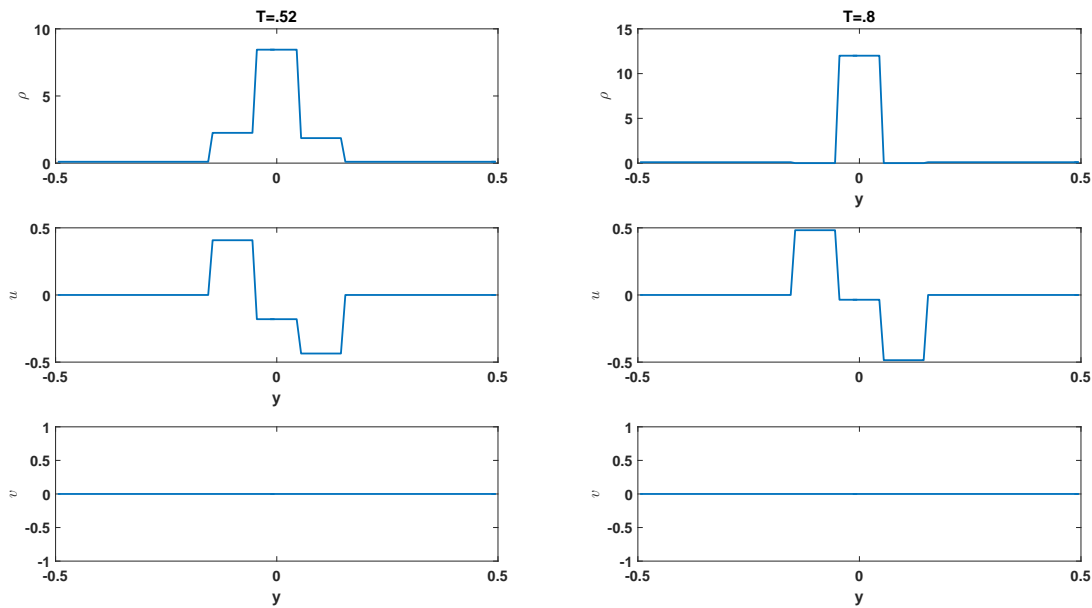}
    \caption{Numerical Solution $U$ at the section $x=0$}
    \label{section}
\end{figure}

\end{enumerate}
\bibliographystyle{elsarticle-num}
\bibliography{jcp} 

\end{document}